\documentclass[reqno,11pt]{amsart}
\usepackage{amsfonts, amsmath, amssymb, amsthm, color}
\allowdisplaybreaks[1]
\oddsidemargin=0.0001in
\evensidemargin=0.0001in
   \addtolength{\topmargin}{-1.5cm}
\setlength{\textwidth}{155mm}
\setlength{\textheight}{235mm}
\newtheorem{thm}{Theorem}[section]
\newtheorem{lemma}{Lemma}[section]
\newtheorem{prop}{Proposition}[section]
\newtheorem{cor}{Corollary}[section]

\newtheorem{rmk}{Remark}[section]
\theoremstyle{definition}
\numberwithin{equation}{section}
\newcommand{\rr}{\mathbb{R}}
\newcommand{\al}{\alpha}
\newcommand{\de}{\delta}
\newcommand{\ga}{\gamma}

\newcommand{\la}{\lambda}

\newcommand{\ur}{u_\rho}
\newcommand{\Wr}{W_\rho}
\newcommand{\phr}{\varphi_\rho}
\newcommand{\wda}{w_\delta^\alpha}
\newcommand{\wdai}{w_{\delta_i}^{\alpha_i}}

\newcommand{\wi}{w_i}
\newcommand{\wj}{w_j}
\newcommand{\Aj}{A_j}
\newcommand{\ai}{{\alpha_i}}
\newcommand{\dei}{\delta_i}
\newcommand{\aj}{{\alpha_j}}
\newcommand{\dej}{\delta_j}
\newcommand{\Thj}{\Theta_j}

\newcommand{\si}{\sigma(i)}

\newcommand{\kj}{k_j}
\newcommand{\Akcal}{\mathcal A_k}

\newcommand{\qj}{q_j}
\newcommand{\bgk}{\beta_{\gamma,k}}
\newcommand{\Rrho}{\mathcal R_\rho}
\newcommand{\Srho}{\mathcal S_\rho}
\newcommand{\Lrho}{\mathcal L_\rho}
\newcommand{\Nrho}{\mathcal N_\rho}
\newcommand{\Trho}{\mathcal T_\rho}
\newcommand{\phir}{\phi_\rho}
\newcommand{\phij}{\phi^j}
\newcommand{\Omj}{\Omega_n^j}

\newcommand{\scp}{\sigma}
\newcommand{\scpi}{\sigma_i}
\newcommand{\scpj}{\sigma_j}
\newcommand{\Zi}{Z_i}
\newcommand{\Rj}{R_j}
\newcommand{\calH}{\mathcal H_\gamma}
\newcommand{\calMk}{\mathcal M_k}
\newcommand{\msp}{m_+}
\newcommand{\msm}{m_-}
\newcommand{\tmsp}{\widetilde m_+}
\newcommand{\tmsm}{\widetilde m_-}
\newcommand{\Ep}{\mathcal E_+}
\newcommand{\Em}{\mathcal E_-}
\newcommand{\tp}{\tau_1}

\newcommand{\ka}{\kappa}
\begin{document}
\title[Asymmetric sinh-Poisson equation]{Sign-changing tower of bubbles for a sinh-Poisson equation with asymmetric exponents}
\author[A.~Pistoia]{A.~Pistoia}
\address[A.~Pistoia] {Dipartimento SBAI, Universit\`{a} di Roma \lq\lq La Sapienza\rq\rq, Via Antonio Scarpa 16, 00161 Rome, Italy}
\email{pistoia@dmmm.uniroma1.it}
\author[T.~Ricciardi]{T.~Ricciardi}
\address[T.~Ricciardi] {Dipartimento di Matematica e Applicazioni ``R.~Caccioppoli", Universit\`{a} di Napoli Federico II, 
Via Cintia, 80126 Naples, Italy}
\email{tonricci@unina.it}
\begin{abstract}
Motivated by the statistical mechanics description of stationary 
2D-turbulence,
for a sinh-Poisson type equation with
asymmetric nonlinearity, 
we construct a concentrating solution sequence in the form
of a tower of singular Liouville bubbles, each of which has a 
different degeneracy exponent. 
The asymmetry parameter $\gamma\in(0,1]$ corresponds to the ratio between the intensity of the negatively rotating vortices
and the intensity of the positively rotating vortices.
Our solutions correspond to a superposition of highly concentrated vortex configurations of
alternating orientation; they extend in a nontrivial way some known results for $\ga=1$.
Thus, by analyzing the case  
$\ga\neq1$ we
emphasize specific properties of the physically relevant parameter $\ga$ in the 
vortex concentration phenomena.
\end{abstract}
\subjclass[2000]{35J91, 35A01, 35B44, 35B30}
\date{}
\keywords{Asymmetric sinh-Poisson equation, concentrating solution, tower of bubbles} 
\maketitle
\section{Introduction and statement of the main result}
\label{sec:intro}
We are interested in the existence of bubble-tower type solutions for the problem:
\begin{equation}
\label{eq:pb}
\left\{
\begin{aligned}
-\Delta u=&\rho(e^u-\tau e^{-\gamma u})&&\mbox{in\ }\Omega\\
u=&0&&\mbox{on\ }\partial\Omega,
\end{aligned}
\right.
\end{equation}
where $\Omega\subset\mathbb R^2$ is a smooth bounded domain,
$\rho>0$ is a small constant, $\gamma,\tau\in(0,1]$.
\par
Equation~\eqref{eq:pb} arises in the statistical mechanics description of two-dimensional equilibrium turbulence,
as initiated by Onsager~\cite{Onsager}.
More precisely, in an unpublished manuscript reproduced in the review article \cite{EyinkSreenivasan},
Onsager derived the following equation (see also \cite{SawadaSuzuki} for a rigorous derivation):
\begin{equation}
\label{eq:Onsager}
\left\{
\begin{aligned}
-\Delta u=&\la\left(\tp\frac{e^u}{\int_\Omega e^u\,dx}
-(1-\tp)\ga\frac{e^{-\gamma u}}{\int_\Omega e^{-\ga u}\,dx}\right)&&\mbox{in\ }\Omega\\
u=&0&&\mbox{on\ }\partial\Omega,
\end{aligned}
\right.
\end{equation} 
where $u$ denotes the stream function of the two-dimensional flow, $\la>0$ is a constant
related to the inverse temperature, 
the positively rotating vortices have unit intensity,
$\ga\in(0,1]$ denotes the intensity
of the negatively rotating vortices
and $\tp\in[0,1]$ determines a priori the ratio of the number of positively rotating vortices to
the total number of vortices.
In more recent years, a similar equation was derived by Neri~\cite{Neri},
under the assumption that the vortex intensities are independent identically distributed random variables
with probability measure $\mathcal P$, defined on the (normalized) vortex intensity range $[-1,1]$.
If such a measure is chosen in the form $\mathcal P(dr)=\tp\delta_1(dr)+(1-\tp)\delta_{-\ga}(dr)$, 
where $\delta_1(dr),\delta_{-\ga}(dr)\in\mathcal M([-1,1])$ denote Dirac measures, the resulting equation 
reduces to:
\begin{equation}
\label{eq:Neri}
\left\{
\begin{aligned}
-\Delta u=&\la\frac{\tp e^u-(1-\tp)\ga e^{-\ga u}}
{\int_\Omega(\tp e^u+(1-\tp)e^{-\ga u})\,dx}
&&\mbox{in\ }\Omega\\
u=&0&&\mbox{on\ }\partial\Omega.
\end{aligned}
\right.
\end{equation} 
\par
We observe that the limit case $\tau=0$ in \eqref{eq:pb} yields the well-known Gelfand problem
\begin{equation}
\label{eq:Gelfand}
\begin{aligned}
&-\Delta u=\rho e^u\quad\mbox{in\ }\Omega, 
&&u=0\quad\mbox{on\ }\partial\Omega
\end{aligned}
\end{equation}
and correspondingly the limit case $\tp=1$ in \eqref{eq:Onsager} and \eqref{eq:Neri} yields the
so-called standard mean field equation 
\begin{equation}
\label{eq:meanfield}
\begin{aligned}
&-\Delta u=\la\frac{e^u}{\int_\Omega e^u\,dx}\quad\mbox{in\ }\Omega, 
&&u=0\quad\mbox{on\ }\partial\Omega.
\end{aligned}
\end{equation}
There is a vast literature concerning \eqref{eq:Gelfand}--\eqref{eq:meanfield}, 
see, e.g., \cite{CagliotiLionsMarchioroPulvirenti, JosephLundgren, Lin, Malchiodi} and the references therein.
\par
In the special case $\ga=1$, $\tau=1$ problem~\eqref{eq:pb}
reduces to the sinh-Poisson problem
\begin{equation}
\label{eq:sinhPoisson}
\begin{aligned}
&-\Delta u=\rho(e^u-e^{-u})\quad\mbox{in\ }\Omega,
&&u=0\quad\mbox{on\ }\partial\Omega,
\end{aligned}
\end{equation}
while the non-local counterparts \eqref{eq:Onsager} and \eqref{eq:Neri}
of problem~\eqref{eq:pb} with $\ga=1$
are equivalent to the problems
\begin{equation}
\label{eq:sinhOnsager}
\begin{aligned}
&-\Delta u=\la_1\frac{e^u}{\int_\Omega e^u\,dx}
-\la_2\frac{e^{-u}}{\int_\Omega e^{-u}\,dx}
\quad\mbox{in\ }\Omega,
&&u=0\quad\mbox{on\ }\partial\Omega,
\end{aligned}
\end{equation}
and
\begin{equation}
\label{eq:sinhNeri}
\begin{aligned}
&-\Delta u=\frac{\la_1 e^u-\la_2 e^{-u}}{\int_\Omega(\la_1 e^u+\la_2 e^{-u})\,dx}
\quad\mbox{in\ }\Omega,
&&u=0\quad\mbox{on\ }\partial\Omega,
\end{aligned}
\end{equation}
respectively.
Problem~\eqref{eq:sinhOnsager} was derived in \cite{JoyceMontgomery}--\cite{PointinLundgren}
by statistical mechanics arguments.
The sinh-Poisson equation~\eqref{eq:sinhPoisson} is also related to constant mean curvature surfaces, see \cite{JostWangYeZhou}.
Problems~\eqref{eq:sinhPoisson}--\eqref{eq:sinhOnsager}--\eqref{eq:sinhNeri} received a considerable attention in recent years,
see \cite{GrossiPistoia, OhtsukaSuzuki, JostWangYeZhou, Ricciardi2007, Zhou, Jevnikar}. 
\par
On the other hand, few results are available for \eqref{eq:pb}. The existence of concentrating sign-changing solutions
was recently established in \cite{PistoiaRicciardi2016} and mountain pass solutions were obtained in \cite{RicciardiZecca052016}.
The special 
case $\ga=1/2$ was studied in \cite{JevnikarYang2016} in relation to the Tzitz\'eica  equation in affine geometry.

\par
Our aim in this article is to construct a family of solutions $u_\rho$ to problem~\eqref{eq:pb} which concentrate 
as $\rho\to0^+$ with an arbitrarily prescribed number 
$k\in\mathbb N$ of sign-changing
singular bubbles, on the line of \cite{GrossiPistoia}.
\par
We recall that $m,n\in\mathbb N$ are \textit{coprime} if they do not admit common divisors.
We make the following assumptions for the domain~$\Omega$:
\begin{equation}
\label{assumpt:Omega}
\begin{aligned}
0\in\Omega\ \mbox{and\ }
&\begin{cases}
x\in\Omega\Rightarrow -x\in\Omega\ \mbox{and\ }x\,e^{2\pi\sqrt{-1}/(m+n)}\in\Omega,
&\mbox{if\ }\ga=\frac{m}{n},\ m,n\in\mathbb N\ \mbox{coprime};\\
x\in\Omega\Rightarrow -x\in\Omega,
&\mbox{if\ }\ga\not\in\mathbb Q,
\end{cases}
\\
\end{aligned}
\end{equation}
where, in complex notation, multiplication by $e^{2\pi\sqrt{-1}/(m+n)}$ denotes a rotation
about the origin by the angle $2\pi/(m+n)$.
\par
Correspondingly, we define the Sobolev space
\begin{equation}
\label{def:H}
\calH=
\begin{cases}
\{\varphi\in H_0^1(\Omega):\ \varphi(xe^{2\pi\sqrt{-1}/(m+n)})=\varphi(x)=\varphi(-x)\,\forall x\in\Omega\},
&\mbox{if\ }
\left\{
\begin{aligned}
&\ga=m/n,\\ &m,n\in\mathbb N\ \mbox{coprime}
\end{aligned}
\right.
\\
\{\varphi\in H_0^1(\Omega):\ \varphi(-x)=\varphi(x)\,\forall x\in\Omega\},
&\mbox{if\ }\ga\not\in\mathbb Q.
\end{cases}
\end{equation}
We establish the following result.
\begin{thm}
\label{thm:main}
Fix $\ga\in(0,1]$. Assume that $\Omega$ satisifes the symmetry assumption~\eqref{assumpt:Omega}.
For any $k\in\mathbb N$ there exists $\rho_0>0$ such that for all $\rho\in(0,\rho_0)$
problem~\eqref{eq:pb}
admits a concentrating sign-changing family of solutions $\ur\in\calH$ satisfying
\begin{equation*}
\begin{aligned}
&\rho e^{\ur}\,dx\stackrel{\ast}{\rightharpoonup}m_+(0)\,\delta_0(dx),
&&\rho\tau e^{-\ga\ur}\,dx\stackrel{\ast}{\rightharpoonup}m_-(0)\,\delta_0(dx)
\qquad\mbox{as\ }\rho\to0^+
\end{aligned}
\end{equation*}
weakly in the sense of measures, where the \lq\lq blow-up masses"
$m_+(0),m_-(0)$ are given by
\begin{equation}
\label{def:blowupmassodd}
\left\{
\begin{aligned}
&\frac{m_+(0)}{4\pi}=
\frac{k+1}{2}\left[(1+\frac{1}{\ga})k+1-\frac{1}{\ga}\right]\\
&\frac{m_-(0)}{4\pi}=
\frac{k-1}{2}\left[(1+\frac{1}{\ga})k+1-\frac{1}{\ga}\right]
\end{aligned}
\right.,
\qquad\qquad\mbox{if $k$ is odd;}
\end{equation}
and
\begin{equation}
\label{def:blowupmasseven}
\left\{
\begin{aligned}
&\frac{m_+(0)}{4\pi}=
k\left[(1+\frac{1}{\ga})\frac{k}{2}-\frac{1}{\ga}\right]\\
&\frac{m_-(0)}{4\pi}=
k\left[(1+\frac{1}{\ga})\frac{k}{2}+1\right]
\end{aligned}
\right.,
\qquad\qquad\mbox{if $k$ is even.}
\end{equation}
Moreover,
\begin{equation}
\label{eq:urprofile}
\ur(x)\to\calMk G(x,0) 
\end{equation}
uniformly on compact subsets of $\Omega\setminus\{0\}$ as $\rho\to0^+$,
where $\calMk=m_+(0)-m_-(0)$ is the \lq\lq algebraic total mass" with values
\begin{equation}
\label{def:Mk}
\calMk=\begin{cases}
4\pi[(1+\frac{1}{\ga})k-\frac{1}{\ga}+1],&\mbox{if $k$ is odd;}\\
-4\pi(1+\frac{1}{\ga})k,&\mbox{if $k$ is even.}
\end{cases}
\end{equation}
\end{thm}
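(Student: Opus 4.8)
The plan is to construct $\ur$ by a finite-dimensional Lyapunov--Schmidt reduction, taking as approximate solution a superposition of $k$ singular Liouville bubbles centered at the origin, sitting at mutually separated scales and with alternating orientation. For $j=1,\dots,k$ let $\al_j>-1$, let $\de_j=\de_j(\rho)>0$ satisfy $\de_1\ll\de_2\ll\dots\ll\de_k\to0$ as $\rho\to0^+$, denote by
\[
U_{\de_j,\al_j}(x)=\log\frac{8(1+\al_j)^2\,\de_j^{2(1+\al_j)}}{\bigl(\de_j^{2(1+\al_j)}+|x|^{2(1+\al_j)}\bigr)^{2}}
\]
the $\al_j$-singular Liouville bubble, which solves $-\Delta U=|x|^{2\al_j}e^{U}$ in $\rr^2$ with $\int_{\rr^2}|x|^{2\al_j}e^{U_{\de_j,\al_j}}\,dx=8\pi(1+\al_j)$, and by $PU_{\de_j,\al_j}\in H_0^1(\Omega)$ its harmonic projection. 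I would take the alternating tower
\[
\Wr=\sum_{j\ \mathrm{odd}}PU_{\de_j,\al_j}-\frac1\ga\sum_{j\ \mathrm{even}}PU_{\de_j,\al_j},
\]
the factor $1/\ga$ being forced by the requirement that $e^{-\ga(-\frac1\ga U)}=e^{U}$ reproduce a genuine Liouville profile for the $e^{-\ga u}$ nonlinearity. Seeking $\ur\in\calH$ forces every bubble to be centered at $0$ and --- this is the role of the rotation by $2\pi/(m+n)$ in \eqref{assumpt:Omega} when $\ga=m/n$ --- removes from $\calH$ the non-radial elements of the (otherwise degenerate) kernel of the linearized singular Liouville operators; for $\ga\notin\mathbb Q$ the $\al_j$ turn out to be non-integers, so the bubbles are already nondegenerate in the class of even functions and only the reflection symmetry is needed.

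Next I would determine the exponents and the rates. Since $PU_{\de_i,\al_i}(x)=8\pi(1+\al_i)G(x,0)+O(1)$ away from its core, on the scale of the $j$-th bubble each inner bubble $i<j$ acts as a point source producing an effective weight $|x|^{2\al_j}$; writing $\nu_i:=1+\al_i$ for $i$ odd and $\nu_i:=-(1+\al_i)/\ga$ for $i$ even, and matching this weight with the one needed for $U_{\de_j,\al_j}$ to solve its equation, one is led to
\[
\al_j=-2\sum_{i<j}\nu_i\ \ (j\ \mathrm{odd}),\qquad \al_j=2\ga\sum_{i<j}\nu_i\ \ (j\ \mathrm{even}),\qquad \al_1=0,
\]
from which $\al_j\ge0$ for every $j$ and every $\ga\in(0,1]$, while $1+\al_j$ is never a multiple of $m+n$ when $\ga=m/n$, so $\calH$ indeed annihilates all the spurious kernel directions. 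Matching the $\rho$-dependent prefactors fixes $\de_j=d_j\rho^{\beta_j}$ with $\beta_1>\dots>\beta_k>0$ the solution of a triangular linear system, leaving the parameters $d=(d_1,\dots,d_k)$ free in a fixed compact set. The $e^u$-bubbles being those with $j$ odd and the $e^{-\ga u}$-bubbles those with $j$ even, one gets $\msp(0)=8\pi\sum_{j\ \mathrm{odd}}(1+\al_j)$ and $\msm(0)=\frac{8\pi}\ga\sum_{j\ \mathrm{even}}(1+\al_j)$; solving the recursion explicitly --- it gives $\nu_{2\ell-1}=(2\ell-1)+2(\ell-1)/\ga$ and $\nu_{2\ell}=-\bigl(2\ell+(2\ell-1)/\ga\bigr)$ --- yields exactly \eqref{def:blowupmassodd}--\eqref{def:blowupmasseven}, the parity of $k$ entering through whether the outermost bubble is of $e^u$- or $e^{-\ga u}$-type, so that $\calMk=\msp(0)-\msm(0)$ takes the values in \eqref{def:Mk}.

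For the analytic part I would estimate the error $\Rrho:=\Delta\Wr+\rho(e^{\Wr}-\tau e^{-\ga\Wr})$ in a norm weighted by the $k$ Liouville profiles; smallness of $\|\Rrho\|_\ast$ rests on the scale separation $\de_j/\de_{j+1}\to0$, which makes all cross-interactions --- among the bubbles and between the two nonlinearities --- of strictly lower order. Then I would study the linearized operator $\Lrho\phi=\Delta\phi+\rho(e^{\Wr}+\ga\tau e^{-\ga\Wr})\phi$ on $\calH$: after rescaling it converges, near the $j$-th bubble, to the linearization of the singular Liouville equation, whose kernel on the relevant symmetric class is spanned by the single dilation mode $\partial_{\de_j}U_{\de_j,\al_j}$ (here the symmetry of the first step is essential); a standard argument then gives that $\Lrho$ is invertible, with inverse of norm $O(|\log\rho|)$, on the $\calH$-orthogonal complement of $\mathrm{span}\{\partial_{\de_j}PU_{\de_j,\al_j}:j=1,\dots,k\}$. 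A contraction argument then produces, for each admissible $d$ and all small $\rho$, a unique small $\phir=\phir(d)\in\calH$ such that $\Wr+\phir$ solves \eqref{eq:pb} modulo a linear combination of the $\partial_{\de_j}PU_{\de_j,\al_j}$.

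It then remains to choose $d$ so that these Lagrange multipliers vanish, which is equivalent to finding a critical point of $\mathcal F_\rho(d):=J_\rho(\Wr+\phir(d))$, $J_\rho$ being the energy functional associated with \eqref{eq:pb}. Expanding $\mathcal F_\rho(d)=c_0+c_1|\log\rho|+\Psi(d)+o(1)$ uniformly on compact sets, with $\Psi$ built from the self-energies of the bubbles, the Robin function of $\Omega$ at $0$ and the pairwise interactions, one checks that $\Psi$ possesses a $C^1$-stable critical point (found explicitly or as an extremum over a coercive region), which persists for $\rho$ small and yields the solution $\ur=\Wr+\phir(d_\rho)\in\calH$ of \eqref{eq:pb}. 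The weak-$\ast$ convergences of $\rho e^{\ur}$ and $\rho\tau e^{-\ga\ur}$ then follow from the concentration of the bubbles and \eqref{eq:urprofile} from the Green representation away from $0$, with $\calMk=\msp(0)-\msm(0)$ as in \eqref{def:Mk}. I expect the main obstacle to be exactly this interplay of the asymmetry $\ga$ with the hierarchy: one must check that the recursion keeps every $\al_j$ admissible for all $\ga\in(0,1]$, that for $\ga=m/n$ the class $\calH$ kills all spurious kernels of the (for special $\ga$, degenerate) singular bubbles, and that the mixed interactions produced by $e^u$ against $e^{-\ga u}$ at different scales are genuinely negligible both in the error estimate and in the expansion of $\mathcal F_\rho$; the case $\ga\notin\mathbb Q$ is comparatively softer, since the bubbles are then automatically nondegenerate.
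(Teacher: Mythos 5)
Your construction is essentially the paper's ansatz in a different normalization (your $1+\al_j$ is the paper's $\aj/2$, and your recursion for $\nu_i$ reproduces exactly the values \eqref{def:a} and hence the masses \eqref{def:blowupmassodd}--\eqref{def:blowupmasseven}), but the existence mechanism you propose is genuinely different from the paper's. You run a Lyapunov--Schmidt reduction: leave $d=(d_1,\dots,d_k)$ free, invert the linearization only on the orthogonal complement of the dilation modes $\partial_{\dej}PU_{\dej,\aj}$, and then kill the Lagrange multipliers by finding a critical point of a reduced energy $\mathcal F_\rho(d)$. The paper instead determines the constants $d_j$ \emph{exactly} from the matching conditions \eqref{def:adodd}--\eqref{def:adeven} (see \eqref{def:dj}), which makes the error $\Rrho$ decay like a power $\rho^{\overline\beta}$ in $L^p$, and then proves that $\Lrho$ is invertible on \emph{all} of $\calH$ with $\|\Lrho^{-1}\|=O(|\ln\rho|)$ --- note that the dilation modes $Z_j$ are radial, hence they do lie in $\calH$ and are near-kernel elements; the paper disposes of them not by projection but by showing, via the triangular system of identities for the quantities $\scpj(\rho_n)$ in Lemma~\ref{lem:scplimit} and Proposition~\ref{prop:gajzero}, that their limiting components $\eta_j$ all vanish. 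Since $\rho^{\overline\beta}|\ln\rho|\to0$, a direct Banach fixed point then closes the argument with no reduced functional at all. What each approach buys: the paper trades the energy expansion for a more delicate linear theory on the full symmetric space; your route has a more standard linear theory but shifts all the difficulty into the expansion of $\mathcal F_\rho(d)$ and the existence of a $C^1$-stable critical point of $\Psi(d)$.

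That last step is the one genuine gap in your plan: you assert that $\Psi$ ``possesses a $C^1$-stable critical point (found explicitly or as an extremum over a coercive region)'' without any verification. For a $k$-tower with asymmetric exponents this is not routine --- the interaction terms entering $\Psi$ are precisely the quantities the paper must control in Sections~\ref{sec:aidei}--\ref{sec:RrhoSrho}, where the non-monotonicity of $j\mapsto\aj$ for $\ga<1$ (Remark~\ref{rmk:ainotincreasing}) and the nonlinear dependence of the rates $s_j$ on $j$ force new estimates compared with $\ga=1$. If you follow your route you must actually compute $\Psi$ and exhibit the critical point; the paper's explicit formulas for $c_j,d_j$ tell you where it has to be, and in effect the paper's Section~\ref{sec:Thetaj} is the verification that this point solves the critical-point equations exactly. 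Two smaller remarks: your claim that for $\ga\notin\mathbb Q$ ``the $\al_j$ turn out to be non-integers'' fails for $j=1$ (the first bubble is always the regular Liouville bubble, $\al_1=0$ in your normalization), so the reflection symmetry is genuinely needed there to kill the translation modes; and for $\ga=m/n$ the correct statement, as in Corollary~\ref{cor:aiform}, is that $\aj/2\equiv\pm1\pmod{m+n}$, which is what makes the angular kernel modes incompatible with the rotation by $2\pi/(m+n)$.
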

We shall obtain the solution $\ur$ in the form $\ur=\Wr+\phir$,
where $\Wr$ is an alternating sum of $k$ singular Liouville bubbles of the form
\[
w_\de^\al(x)=\ln\frac{2\al^2\de^\al}{(\de^\al+|x|^\al)^2},
\]
projected onto $H_0^1(\Omega)$, namely:
\begin{equation*}
\label{eq:ansatz}
\Wr=
\sum_{i=1}^k(-1)^{i-1}\frac{P\wdai}{\ga^{\sigma(i)}},
\end{equation*}
where $\si=(1-(-1)^{i-1})/2$, $i=1,2,\ldots,k$  and where the singularity parameters $\ai\ge 2$ and the concentration parameters $\dei>0$
are suitably chosen in order to ensure that $\|\nabla\phir\|_{L^2(\Omega)}=o(1)$ as $\rho\to0^+$,
see Section~\ref{sec:Ansatz} for the precise statements.
The functions $w_\de^\al$ are solutions to the singular Liouville equation
\begin{equation*}
-\Delta w_\de^\al=|x|^{\al-2}e^{w_\de^\al},
\qquad
\int_{\mathbb R^2}|x|^{\al-2}e^{w_\de^\al}\,dx<+\infty.
\end{equation*}
Since the appropriate choice of $\ai$'s
leads to $\ai\neq\aj$ for $i\neq j$, we find that the bubble tower approximate solution
$\Wr$ is actually the sum of solutions to \textit{different} singular Liouville problems.
Such new blow-up profiles were observed in \cite{GrossiGrumiauPacella2014}.
Towers of concentrated solutions to different singular Liouville equations were initially introduced
in the article \cite{GrossiPistoia},
where the case $\ga=1$ is considered, and which is the main motivation to this work.
\par
The blow-up masses $m_+(0),m_-(0)$ satisfy the identity  
\begin{equation}
\label{eq:massid}
8\pi\left[m_+(0)+\frac{m_-(0)}{\ga}\right]=[m_+(0)-m_-(0)]^2=\calMk^2.
\end{equation}
Moreover, in view of \eqref{assumpt:Omega}, $0\in\Omega$ is a critical point for the
Robin's function.
In fact, identity~\eqref{eq:massid} is a general property for concentrating solution
sequences for \eqref{eq:pb}, and if the concentration occurs at a single point,
such a point is necessarily a critical point for Robin's function, see, e.g.,
Remark~\ref{rmk:massid} for a proof.
For $\ga=1$ identity~\eqref{eq:massid} was derived in \cite{OhtsukaSuzuki}.
\par
It is natural to conjecture that the blow-up mass values \eqref{def:blowupmassodd}--\eqref{def:blowupmasseven}
are the only admissible values for $m_+(0),m_-(0)$, in view of the mass quantization 
results for the case $\ga=1$ in \cite{JostWangYeZhou}.
In this respect, a mass quantization property for \eqref{eq:pb} was announced in \cite{Takahashiseminar};
a partial result in this direction concerning the minimum values for blow-up masses was obtained in \cite{RicciardiZecca052016}. 
\par
From the physics interpretation point of view, the solutions $\ur$ as obtained in Theorem~\ref{thm:main}
yield solutions to \eqref{eq:Onsager} with total mass
\begin{equation}
\label{eq:lambda}
\la=m_+(0)+\frac{m_-(0)}{\ga}=\frac{\calMk^2}{8\pi}=
\begin{cases}
2\pi[(1+\frac{1}{\ga})k+1-\frac{1}{\ga}]^2,&\mbox{if $k$ is odd;}\\
2\pi(1+\frac{1}{\ga})^2k^2,&\mbox{if $k$ is even}
\end{cases}
\end{equation}
and vortex distribution parameter
\begin{equation}
\label{eq:Ontau}
\tp=\frac{m_+(0)}{\la}=\begin{cases}
\frac{k+1}{(1+\frac{1}{\ga})k+1-\frac{1}{\ga}},
&\mbox{if $k$ is odd;}\\
\frac{(1+\frac{1}{\ga})k-\frac{2}{\ga}}{(1+\frac{1}{\ga})^2k},
&\mbox{if $k$ is even.}
\end{cases}
\end{equation}
They also yield solutions to \eqref{eq:Neri} with total mass given 
by \eqref{eq:lambda} and with no restriction on $\tp$.
It may be interesting to note that the \lq\lq total mass" $\lambda$
is the quantity on the left hand side in the identity~\eqref{eq:massid}.
A proof of these statements is provided in the Appendix.
\par
As already mentioned, our approach to prove Theorem~\ref{thm:main} is strongly 
inspired by the singular bubble-tower construction in \cite{GrossiPistoia},
where the case $\ga=1$ is considered, in the $L^p$-framework introduced in \cite{EspositoGrossiPistoia},
see also \cite{DelPinoKowalczykMusso}.
Nevertheless, the case $\ga\neq1$ turns out to be significantly more delicate to handle,
and it emphasizes specific analytic and geometric properties of the asymmetry parameter.
In fact, the dependence of the singularity coefficients $\ai$ and of the concentration parameters $\dei^{\ai}$
on $\ga$ 
is rather subtle; in particular, unlike the case $\ga=1$,
the $\ai$'s are never monotonically increasing with respect to $i$ and
the concentration parameters $\dei^{\ai}$ do not depend linearly with respect to $i$.
Consequently, new ingredients are required in several estimates.
Finally, it is interesting to observe that the geometrical symmetry condition~\eqref{assumpt:Omega}
required for $\Omega$, which ensures invertibility of the linearized operator,
depends in a relevant way on $\ga$, if $\ga\in(0,1]\cap\mathbb Q$. 
\subsection*{Notation}
For any measurable set $A\subset\Omega$ we denote by $\chi_A$ the characteristic function of $A$.
We denote by $C>0$ a general constant whose value may vary from line to line.
When the integration variable is clear from the context, we omit it.
For all $\phi\in H_0^1(\Omega)$ we set $\|\phi\|:=\|\nabla\phi\|_{L^2(\Omega)}$.
\section{Ansatz and idea of the proof}
\label{sec:Ansatz}
We recall that the \lq\lq singular Liouville bubbles" are defined for $\al\ge 2$ and $\de>0$
by
\begin{equation*}
w_\de^\al(x)=\ln\frac{2\al^2\de^\al}{(\de^\al+|x|^\al)^2}.
\end{equation*}
The functions $w_\de^\al$ satisfy
\begin{equation}
\label{eq:w}
-\Delta w_\de^\al=|x|^{\al-2}e^{w_\de^\al},
\qquad
\int_{\mathbb R^2}|x|^{\al-2}e^{w_\de^\al}\,dx<+\infty.
\end{equation}
Furthermore,
\begin{equation}
\label{eq:Liouvillemass}
\int_{\mathbb R^2}|x|^{\al-2}e^{w_\de^\al}\,dx=4\pi\al.
\end{equation}
The functions $\wda$ are uniquely determined as radial solutions for \eqref{eq:w},
see \cite{PrajapatTarantello}.
We denote by $Pw_\de^\al$ the projection of $w_\de^\al$ onto $H_0^1(\Omega)$.
\\
We define
\begin{equation*}
\sigma(i)=
\frac{1-(-1)^{i-1}}{2}
=\begin{cases}
0, 
&\mbox{if\ }i\ \mbox{is odd,}\\
1, 
&\mbox{if\ }i\ \mbox{is even,}\\
\end{cases}
\end{equation*}
for $i=1,2,\ldots,k$.
\subsection*{Ansatz.}
The solutions are of the form $\ur=\Wr+\phr$, where
\begin{equation}
\label{def:Wr}
\Wr=
\sum_{i=1}^k(-1)^{i-1}\frac{P\wdai}{\ga^{\sigma(i)}}
=\sum_{\stackrel{1\le i\le k}{i\ \mathrm{odd}}}P\wdai
-\frac{1}{\ga}\sum_{\stackrel{1\le i\le k}{i\ \mathrm{even}}}P\wdai,
\end{equation}
with 
\begin{equation*}
\ai=
\begin{cases}
2[(1+\frac{1}{\ga})i-\frac{1}{\ga}],&\mbox{if $i$ is odd}\\
2[(1+\ga)i-1],&\mbox{if $i$ is even}
\end{cases}
\end{equation*}
and
\begin{equation*}
\dei=d_i\rho^{s_i}
\qquad\mbox{for some }d_i>0,\quad i=1,2,\ldots,k,
\end{equation*}
where
\begin{equation*}
s_i=
\begin{cases}
\frac{(1+\ga)(k-i)+\ga}{2[(1+\ga)i-1]},&\mbox{if $k$ is odd};\\
\frac{(1+\ga)(k-i)+1}{2[(1+\ga)i-1]},&\mbox{if $k$ is even}.
\end{cases}
\end{equation*}
Note that in particular
\begin{equation*}
\begin{aligned}
&\ai\ge 2, &&\mbox{for all }i=1,2,\ldots,k\\
&s_i>s_{i+1}\ \mbox{and therefore\ }\de_{i}=o(\de_{i+1}),&&\mbox{for all } i=1,2,\ldots,k-1.
\end{aligned}
\end{equation*}
See Section~\ref{sec:aidei} for the precise values of $d_i$, $i=1,2,\ldots,k$
and for the precise power decay rate of $\dei/\de_{i+1}$.
\par
Henceforth, we denote $w_i:=w_{\de_i}^{\al_i}$, $i=1,2,\ldots k$.
\par
The most delicate part of the construction will be to show that 
if $\ai,\dei$ are chosen according to the above definitions, then $\Wr$ approximates a genuine solution to \eqref{eq:pb}
up to an error which vanishes as a \textit{power} of $\rho$, as $\rho\to0^+$ .
This fact, combined with the $|\ln\rho|$-estimate 
for the norm of the linearized operator (see \eqref{eq:linearest} below for the precise statement)
will enable us to obtain the desired solution as the fixed point of a contraction mapping.
\par 
More precisely, let 
\begin{equation*}
f(t):=e^t-\tau e^{-\ga t},\quad t\in\mathbb R.
\end{equation*}
Then, the error term to be estimated is given by:
\begin{equation}
\label{def:R}
\begin{aligned}
\Rrho=&\Delta\Wr+\rho f(\Wr)\\
=&\rho e^{\Wr}-\sum_{\stackrel{1\le i\le k}{i\ \mathrm{odd}}}|x|^{\ai-2}e^{\wi}
-\Big(\rho\tau e^{-\ga\Wr}-\frac{1}{\ga}\sum_{\stackrel{1\le i\le k}{i\ \mathrm{even}}}|x|^{\ai-2}e^{\wi}\Big)
\end{aligned}
\end{equation}
It is convenient to set:
\begin{equation*}
\begin{aligned}
\Ep:=&\rho e^{\Wr}-\sum_{\stackrel{1\le i\le k}{i\ \mathrm{odd}}}|x|^{\ai-2}e^{\wi}\\
\Em:=&\Big(\rho\tau e^{-\ga\Wr}-\frac{1}{\ga}\sum_{\stackrel{1\le i\le k}{i\ \mathrm{even}}}|x|^{\ai-2}e^{\wi}\Big)
\end{aligned}
\end{equation*}
so that
\[
\Rrho=\Ep-\Em.
\]
One of the main technical issues will be to show that,
provided $\ai, \dei$ are chosen as above, 
there exist $p>1$, $\overline\beta=\overline\beta(k,\ga,p)>0$ such that
\begin{equation}
\label{est:EpEm}
\|\Ep\|_{L^p(\Omega)}+\|\Em\|_{L^p(\Omega)}=O(\rho^{\overline\beta}).
\end{equation}
The appropriate choice of the parameters $\ai,\dei$ is carried out
in Section~\ref{sec:aidei}, where some properties necessary for the subsequent estimates
are also derived. 
Then, estimate~\eqref{est:EpEm} is established in Section~\ref{sec:Thetaj}
and Section~\ref{sec:RrhoSrho}.
\par
In order to prove estimate~\eqref{est:EpEm}, we 
define the \textit{shrinking annuli}
\begin{equation*}
\Aj=\{x\in\Omega:\ \sqrt{\de_{j-1}\de_j}\le|x|<\sqrt{\de_j\de_{j+1}}\},
\qquad j=1,2,\ldots,k,
\end{equation*}
where we set $\de_0=0$ and $\de_{k+1}=+\infty$.
\par
We decompose $\Ep$:
\begin{equation*}
\begin{aligned}
\Ep=&\sum_{1\le j\le k}\Ep\chi_{\Aj}\\
=&\sum_{\stackrel{1\le j\le k}{j\ \mathrm{odd}}}\Big(\rho e^{\Wr}-\sum_{\stackrel{1\le i\le k}{i\ \mathrm{odd}}}|x|^{\ai-2}e^{\wi}\Big)\chi_{\Aj}
+\sum_{\stackrel{1\le j\le k}{j\ \mathrm{even}}}\Big(\rho e^{\Wr}-\sum_{\stackrel{1\le i\le k}{i\ \mathrm{odd}}}|x|^{\ai-2}e^{\wi}\Big)\chi_{\Aj}\\
=&\sum_{\stackrel{1\le j\le k}{j\ \mathrm{odd}}}\Big(\rho e^{\Wr}-|x|^{\aj-2}e^{\wj}\Big)\chi_{\Aj}
+\sum_{\stackrel{1\le j\le k}{j\ \mathrm{even}}}\rho e^{\Wr}\chi_{\Aj}
-\sum_{\stackrel{1\le j\le k}{j\neq i}}\sum_{\stackrel{1\le i\le k}{i\ \mathrm{odd}}}|x|^{\ai-2}e^{\wi}\chi_{\Aj}\\
=:&\Ep^1+\Ep^2+\Ep^3.
\end{aligned}
\end{equation*}
Similarly, we decompose $\Em$:
\begin{equation*}
\begin{aligned}
\Em=&\sum_{1\le j\le k}\Em\chi_{\Aj}\\
=&\sum_{\stackrel{1\le j\le k}{j\ \mathrm{even}}}\Big(\rho\tau e^{-\ga\Wr}-\frac{1}{\ga}\sum_{\stackrel{1\le i\le k}{i\ \mathrm{even}}}|x|^{\ai-2}e^{\wi}\Big)\chi_{\Aj}
+\sum_{\stackrel{1\le j\le k}{j\ \mathrm{odd}}}\Big(\rho\tau e^{-\ga\Wr}-\frac{1}{\ga}\sum_{\stackrel{1\le i\le k}{i\ \mathrm{even}}}|x|^{\ai-2}e^{\wi}\Big)\chi_{\Aj}\\
=&\sum_{\stackrel{1\le j\le k}{j\ \mathrm{even}}}\Big(\rho\tau e^{-\ga\Wr}-|x|^{\aj-2}e^{\wj}\Big)\chi_{\Aj}
+\sum_{\stackrel{1\le j\le k}{j\ \mathrm{odd}}}\rho\tau e^{-\ga\Wr}\chi_{\Aj}
-\frac{1}{\ga}\sum_{\stackrel{1\le j\le k}{j\neq i}}\sum_{\stackrel{1\le i\le k}{i\ \mathrm{even}}}|x|^{\ai-2}e^{\wi}\chi_{\Aj}\\
=:&\Em^1+\Em^2+\Em^3.
\end{aligned}
\end{equation*}
In short, the choice of $\ai,\dei$ will ensure the smallness of the error terms $\Ep^1,\Em^1$,
which measure the interaction in the $j$-th annulus $\Aj$ between the $j$-th bubble $w_{\dej}^{\aj}$ and all other bubbles.
Indeed, as in \cite{GrossiPistoia}, the errors $\Ep^1,\Em^1$ are small inside the $j$-th annulus $\Aj$ \lq\lq because" 
the choice of $\aj$ will cancel the interaction of
the $j$-th bubble and all previous (faster concentrating) bubbles,
whereas the choice of $\dej$ will cancel the interaction of the $j$-th bubble $w_{\dej}^{\aj}$ and all subsequent (slower concentrating)
bubbles. 
On the other hand, the error terms $\Ep^2,\Em^2$ are estimated by some delicate recursive relations
for $\ai,\dei$.
Estimation of $\Ep^3,\Em^3$ follows from the fact that,
outside the $j$-th annulus $\Aj$, the $j$-th bubble $w_{\dej}^{\aj}$ is negligible,
up to an error which vanishes as a power of $\rho$.
\par
Once \eqref{est:EpEm} is established, we define
\begin{equation}
\label{def:RSN}
\begin{aligned}
\Srho:=&\rho f'(\Wr)-\sum_{i=1}^k|x|^{\ai-2}e^{\wi}\\
\Nrho(\phi):=&\rho[f(\Wr+\phi)-f(\Wr)-f'(\Wr)\phi]\\
\Lrho\phi:=&-\Delta\phi-\sum_{i=1}^k|x|^{\ai-2}e^{\wi}.
\end{aligned}
\end{equation}
We note that
\[
\Srho=\Ep+\ga\Em,
\]
so that estimate~\eqref{est:EpEm} provides an estimate for $\Srho$ as well.
In Section~\ref{sec:lineartheory}
we show that for any $p>1$ there exists $c>0$ such that
\begin{equation}
\label{eq:linearest}
\|\phi\|\le c|\ln\rho|\|\Lrho\phi\|_p,
\qquad\forall \phi\in\calH.
\end{equation}
At this point, we can show that there exists $\rho_0>0$ such that
for all $\rho\in(0,\rho_0)$ the equation
\begin{equation*}
\phi=\mathcal T(\phi):=(\Lrho)^{-1}(\Nrho(\phi)+\Srho\phi+\Rrho)
\end{equation*}
admits a fixed point $\phir$ satisfying $\|\phir\|\le R\rho^{\overline\beta_p}|\ln\rho|$
for some $\overline\beta_p=\overline\beta_p(\tau,\ga,k)>0$, $p>1$ and $R>0$. 
The function $\ur=\Wr+\phir$ is the desired
solution to \eqref{eq:pb}.
The details of the fixed point argument are contained in Section~\ref{sec:fixpoint}.
\section{Definition and properties of the parameters}
\label{sec:aidei}
In this section we define the parameters $\aj,\dej$, $j=1,2,\ldots,k$ and we establish some properties which will be used
in order to estimate the error terms.
The justification of the choice of $\aj,\dej$ will be provided in Section~\ref{sec:Thetaj}.
\par
We denote by $G(x,y)$, $x,y\in\Omega$, $x\neq y$, the Green's function for $\Omega$,
namely
\[
-\Delta G(\cdot, y)=\delta_y\quad \mbox{in\ $\Omega$},
\qquad\qquad G(\cdot, y)=0\quad \mbox{on\ }\partial\Omega.
\]
We denote by $H(x,y)$ the regular part of $G(x,y)$:
\begin{equation}
\label{def:regG}
G(x,y)=\frac{1}{2\pi}\ln\frac{1}{|x-y|}+H(x,y).
\end{equation}
We set 
\begin{equation*}
h(x)=H(x,0).
\end{equation*}
\subsection*{Definition of $\aj,\dej$.}
The appropriate values for $\aj,\dej$ are deduced form the following defining conditions.
\subsubsection*{Odd index}
If $j\in\{1,2,\ldots,k\}$ is odd, we define:
\begin{equation}
\label{def:adodd}
\left\{
\begin{split}
&\aj:=2\left(1+\sum_{i<j}\frac{(-1)^i}{\ga^{\sigma(i)}}\al_i\right)\\
&\ln(2\aj^2\dej^{\aj}):=2\sum_{i>j}\frac{(-1)^i}{\ga^{\sigma(i)}}\ln\dei^{\ai}-4\pi h(0)\sum_{i=1}^k\frac{(-1)^{i}}{\ga^{\sigma(i)}}\ai
+\ln\rho.
\end{split}
\right.
\end{equation}
\subsubsection*{Even index}
If $j\in\{1,2,\ldots,k\}$ is even, we define:
\begin{equation}
\label{def:adeven}
\left\{
\begin{split}
&\aj:=2\left(1-\ga\sum_{i<j}\frac{(-1)^{i}}{\ga^{\sigma(i)}}\ai\right)\\
&\ln(2\aj^2\dej^{\aj}):=-2\ga\sum_{i>j}\frac{(-1)^{i}}{\ga^{\sigma(i)}}\ln\dei^{\ai}+4\pi\ga h(0)\sum_{i=1}^k\frac{(-1)^{i}}{\ga^{\sigma(i)}}\ai
+\ln(\rho\tau\ga).
\end{split}
\right.
\end{equation}
We note that the $\aj$'s are determined by the number $k$ of bubbles only;
the concentration parameters $\dej$ also depend on $\rho$.
Moreover, \eqref{def:adodd}--\eqref{def:adeven} define $\aj$ recursively in terms of $\al_1,\al_2,\ldots,\al_{j-1}$
and $\dej$ in terms of $\al_1,\al_2,\ldots,\al_k$ and $\de_{j+1},\de_{j+2}\ldots,\de_{k}$.
\begin{rmk}
There holds $\aj\ge2$ for all $j=1,2,\ldots,k$.
\end{rmk}
An explicit computation yields the following first values of the $\aj$'s:
\begin{equation}
\label{eq:aifirstvalues}
\begin{aligned}
&\qquad\qquad\al_1=2,\ \al_3=2(3+\frac{2}{\ga}),\ \al_5=2(5+\frac{4}{\ga}),\ \al_7=2(7+\frac{6}{\ga}),\ldots\\
&\qquad\qquad\al_2=2(1+2\ga),\ \al_4=2(3+4\ga),\ \al_6=2(5+6\ga),\ \al_8=2(7+8\ga)\ldots
\end{aligned}
\end{equation}
Moreover, if $k=2$, we obtain the following decay rates for the $\dej$'s:
\[
\de_1=d_1\rho^{(2+\ga)/(2\ga)},
\qquad\qquad
\de_2=d_2\rho^{1/[2(1+2\ga)]}.
\]
It will be convenient to set
\begin{equation}
\label{def:Ak}
\Akcal:=\sum_{i=1}^k\frac{(-1)^i}{\ga^{\sigma(i)}}\ai.
\end{equation}
\subsection*{Properties of the singularity coefficients $\aj$}
In this subsection we determine $\aj$ explicitly in terms of $j$,
for $j=1,2,\ldots,k$,
and we establish the main properties
of the $\aj$'s which will be needed in the sequel.
\begin{prop}
\label{prop:aimain}
For all $j=1,2,\ldots,k$ we have:
\begin{equation}
\label{def:a}
\aj=\begin{cases}
2[(1+\frac{1}{\ga})j-\frac{1}{\ga}]
&\mbox{if\ $j$ is odd};\\
2[(1+\ga)j-1]
&\mbox{if\ $j$ is even}.
\end{cases}
\end{equation}
Moreover, we have
\begin{equation}
\label{eq:ajsumodd}
\sum_{\stackrel{1\le j\le k}{j\ \mathrm{odd}}}\aj=
\begin{cases}
\frac{k+1}{2}[(1+\frac{1}{\ga})k+1-\frac{1}{\ga}],
&\mbox{if $k$ is odd;}\\
k[(1+\frac{1}{\ga})\frac{k}{2}-\frac{1}{\ga}],
&\mbox{if $k$ is even;}
\end{cases}
\end{equation}
\begin{equation}
\label{eq:ajsumeven}
\frac{1}{\ga}\sum_{\stackrel{1\le j\le k}{j\ \mathrm{even}}}\aj=
\begin{cases}
\frac{k-1}{2}[(1+\frac{1}{\ga})k+1-\frac{1}{\ga}],
&\mbox{if $k$ is odd;}\\
\frac{k}{2}[(1+\frac{1}{\ga})(k+1)+1-\frac{1}{\ga}],
&\mbox{if $k$ is even;}
\end{cases}
\end{equation}
In particular, the following identity holds true.
\begin{equation}
\label{eq:suma}
\Akcal=
\begin{cases}
-(1+\frac{1}{\ga})k+\frac{1}{\ga}-1,&\mbox{if $k$ is odd;}\\
(1+\frac{1}{\ga})k,&\mbox{if $k$ is even,}
\end{cases}
\end{equation}
where $\Akcal$ is defined in \eqref{def:Ak}.
\end{prop}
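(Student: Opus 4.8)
The plan is to collapse the coupled recursions~\eqref{def:adodd}--\eqref{def:adeven} into a single one-step recursion. Introduce the local notation $c_i:=(-1)^i/\ga^{\sigma(i)}$, so that $c_i=-1$ if $i$ is odd and $c_i=1/\ga$ if $i$ is even, together with the partial sums
\[
T_j:=\sum_{i<j}c_i\al_i,\qquad j=1,\dots,k,
\]
so that $T_1=0$. With this notation \eqref{def:adodd} and \eqref{def:adeven} state exactly that $\aj=2(1+T_j)$ when $j$ is odd and $\aj=2(1-\ga T_j)$ when $j$ is even. Since $T_{j+1}=T_j+c_j\aj$, substituting these expressions gives, for $1\le j\le k-1$,
\begin{equation*}
T_{j+1}=
\begin{cases}
-T_j-2,&\text{if $j$ is odd},\\
-T_j+2/\ga,&\text{if $j$ is even}.
\end{cases}
\end{equation*}

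From $T_1=0$ an immediate induction on $j$, distinguishing even and odd $j$, yields the closed forms $T_{2\ell+1}=2\ell(1+1/\ga)$ for $\ell\ge0$ and $T_{2\ell}=-2\ell-2(\ell-1)/\ga$ for $\ell\ge1$. Feeding these back into $\aj=2(1+T_j)$ (for $j$ odd) and $\aj=2(1-\ga T_j)$ (for $j$ even) and simplifying produces exactly formula~\eqref{def:a}. Since $\ga\in(0,1]$ and $j\ge1$, the right-hand side of \eqref{def:a} is at least $2$ in both cases, which proves the inequality $\aj\ge2$ stated in the remark preceding this proposition; specializing to $j=1,\dots,8$ recovers~\eqref{eq:aifirstvalues}.

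To prove \eqref{eq:ajsumodd} I use that the odd indices in $\{1,\dots,k\}$ are the first $\lceil k/2\rceil$ positive odd integers, so their sum equals $\lceil k/2\rceil^2$; inserting \eqref{def:a}, using the count $\lceil k/2\rceil$ of odd indices, and treating the cases $k$ odd and $k$ even separately gives \eqref{eq:ajsumodd}. Similarly, the even indices in $\{1,\dots,k\}$ are $2,4,\dots,2m$ with $m=\lfloor k/2\rfloor$, so $\sum 2\ell=m(m+1)$; substituting \eqref{def:a} and multiplying by $1/\ga$ yields \eqref{eq:ajsumeven}, again after separating the parity of $k$. Finally $\Akcal=\sum_{i=1}^k c_i\al_i=-\sum_{j\ \text{odd}}\aj+\tfrac1\ga\sum_{j\ \text{even}}\aj$, so \eqref{eq:suma} follows by subtracting \eqref{eq:ajsumodd} from \eqref{eq:ajsumeven}: when $k$ is odd the common factor $(1+1/\ga)k+1-1/\ga$ multiplies the difference $\tfrac{k-1}{2}-\tfrac{k+1}{2}=-1$, and when $k$ is even the two expressions combine to $(1+1/\ga)k$.

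All the computations involved are elementary; the only point worth isolating is that the coupled system~\eqref{def:adodd}--\eqref{def:adeven} reduces to the alternating one-step recursion for $T_j$ above, after which the argument is pure bookkeeping. I therefore expect no real obstacle, only the minor nuisance of tracking the parities of $k$ and of the summation index when converting the arithmetic-progression sums into the closed forms of~\eqref{eq:ajsumodd}--\eqref{eq:ajsumeven}. A convenient consistency check at the end is that the blow-up masses built from \eqref{eq:ajsumodd}--\eqref{eq:ajsumeven} must satisfy identity~\eqref{eq:massid}.
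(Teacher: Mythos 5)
Your proof is correct and follows essentially the same route as the paper: both collapse the coupled defining relations \eqref{def:adodd}--\eqref{def:adeven} into a one-step recursion (the paper states it directly for $\al_j$ in terms of $\al_{j-1}$ in \eqref{eq:arec}, you state the equivalent recursion for the partial sums $T_j$), solve it by induction to get \eqref{def:a}, and then obtain \eqref{eq:ajsumodd}--\eqref{eq:suma} from the elementary arithmetic-progression sums of Lemma~\ref{lem:sums}. All of your closed forms and case computations check out against the paper's.
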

The following consequence of Proposition~\ref{prop:aimain} will be essential in the proof
of the invertibility of the linearized operator $\Lrho$ defined in \eqref{def:RSN}.
Indeed, the kernel of $\Lrho$, is determined by the
divisibility properties of $\aj/2$, $j=1,2,\ldots,k$.
We recall that two integers $m,n\in\mathbb N$ are said to be \emph{coprime}
if they do not admit common divisors.
\begin{cor}
\label{cor:aiform}
Suppose $\ga=m/n$ with $m,n\in\mathbb N$, $m,n$ coprime, 
and suppose that
$\aj/2\in\mathbb N$ for some $j=1,2,\ldots,k$. Then, there exists $\kj\in\mathbb N\cup\{0\}$ such that
\begin{equation}
\label{eq:aiform}
\frac{\aj}{2}=
\begin{cases}
(m+n)\kj+1,&\mbox{if\ }j\ \mbox{is odd;}\\
(m+n)\kj-1,&\mbox{if\ }j\ \mbox{is even.}
\end{cases}
\end{equation}
\end{cor}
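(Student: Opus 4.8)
\textbf{Proof proposal for Corollary~\ref{cor:aiform}.}
The plan is to use the explicit formulas for $\aj$ in \eqref{def:a} from Proposition~\ref{prop:aimain} and substitute $\ga=m/n$, then translate the hypothesis $\aj/2\in\mathbb N$ into a divisibility statement modulo $m+n$. First I would treat the odd case: if $j$ is odd, then
\[
\frac{\aj}{2}=\Bigl(1+\frac{1}{\ga}\Bigr)j-\frac{1}{\ga}=\Bigl(1+\frac{n}{m}\Bigr)j-\frac{n}{m}=\frac{(m+n)j-n}{m}.
\]
Assuming this is an integer, say $N:=\aj/2$, we get $mN=(m+n)j-n$, hence $(m+n)j=mN+n=m(N+1)+(n-m)\cdot\frac{?}{}$— more cleanly, $(m+n)j\equiv n\pmod m$, but the cleaner route is to work modulo $m+n$: from $mN=(m+n)j-n$ we read $mN\equiv -n\pmod{m+n}$. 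Since $m+n\equiv 0$, we have $m\equiv -n\pmod{m+n}$, so $mN\equiv -nN$, giving $-nN\equiv -n$, i.e. $n(N-1)\equiv 0\pmod{m+n}$. Because $m,n$ are coprime, $n$ and $m+n$ are also coprime (any common divisor of $n$ and $m+n$ divides $m$), so $n$ is invertible modulo $m+n$ and therefore $N\equiv 1\pmod{m+n}$. This yields $N=(m+n)\kj+1$ for some integer $\kj$, and $\kj\ge 0$ since $N=\aj/2\ge 1$ forces $\kj\ge 0$ (if $\kj=-1$ then $N=1-(m+n)\le 0$, impossible when $m+n\ge 2$, with the borderline $m=n$ excluded by coprimality unless $m=n=1$, a case one checks directly: then $\ga=1$, $\aj/2=j$, odd, consistent with $\kj=(j-1)/2\ge 0$).

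Next I would run the analogous computation for even $j$: here $\aj/2=(1+\ga)j-1=\frac{(m+n)j}{n}-1=\frac{(m+n)j-n}{n}$, so if $N:=\aj/2\in\mathbb N$ then $nN=(m+n)j-n$, i.e. $n(N+1)=(m+n)j$. Reducing modulo $m+n$ gives $n(N+1)\equiv 0$, and again using that $\gcd(n,m+n)=1$ we conclude $N+1\equiv 0\pmod{m+n}$, that is $N=(m+n)\kj-1$ for some integer $\kj$. Nonnegativity of $\kj$ follows because $N\ge 1$ forces $(m+n)\kj\ge 2$, hence $\kj\ge 1>0$ in fact (so the stated $\kj\in\mathbb N\cup\{0\}$ is certainly satisfied).

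The only subtlety — and the step I would be most careful about — is the coprimality argument $\gcd(n,m+n)=1$, which is elementary but essential, together with keeping the two parity cases from getting crossed: note that the odd case produces $+1$ and the even case produces $-1$, matching \eqref{eq:aiform}. A secondary point worth stating explicitly is the degenerate case $\ga=1$ (i.e. $m=n=1$, the only pair of equal coprime naturals), where $m+n=2$ and one verifies directly that $\aj/2=j$ is of the form $2\kj+1$ when $j$ is odd and $2\kj-1$ when $j$ is even; no contradiction with coprimality arises because one does not divide by a zero residue. I do not expect any genuine obstacle here: once the explicit form \eqref{def:a} is in hand, the corollary is pure elementary number theory, and the whole argument is a few lines of modular arithmetic.
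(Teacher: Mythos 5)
Your proof is correct and follows essentially the same route as the paper: substitute $\ga=m/n$ into the explicit formula \eqref{def:a} and exploit $\gcd(m,n)=1$. The only cosmetic difference is that the paper reads the integrality of $\aj/2=j+\frac{n}{m}(j-1)$ (odd $j$), resp.\ $\frac{m}{n}j+j-1$ (even $j$), as $m\mid(j-1)$, resp.\ $n\mid j$, thereby producing $\kj=(j-1)/m$, resp.\ $\kj=j/n$, explicitly, whereas you clear denominators and reduce modulo $m+n$ using $\gcd(n,m+n)=1$ — a few lines of the same elementary arithmetic either way.
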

\begin{proof}
Suppose $j$ is odd.
Then, 
\[
\frac{\aj}{2}=(1+\frac{n}{m})j-\frac{n}{m}=j+\frac{n}{m}(j-1).
\]
Since $m,n$ are coprime, it follows that $j-1=\kj m$ for some $\kj\in\mathbb N\cup\{0\}$.
Consequently,
\[
\frac{\aj}{2}=\kj m+1+n\kj=(m+n)\kj+1,
\]
and \eqref{eq:aiform} is established for odd $j$.
\par
Similarly, suppose $j$ is even.
Then,
\[
\frac{\aj}{2}=(1+\frac{m}{n})j-1=\frac{m}{n}j+j-1.
\]
Since $m,n$ are coprime, it follows that $j=\kj n$ for some $\kj\in\mathbb N$.
Consequently,
\[
\frac{\aj}{2}=(1+\frac{m}{n})\kj n-1=(m+n)\kj-1.
\]
Formula \eqref{eq:aiform} is completely established.
\end{proof}
In order to prove Proposition~\ref{prop:aimain} we establish some lemmas.
\begin{lemma}
The following recursive formulae hold:
\begin{equation}
\label{eq:arec}
\aj=\begin{cases}
\frac{1}{\ga}\al_{j-1}+2(1+\frac{1}{\gamma}),&\mbox{if $j$ is odd}\\
\gamma\al_{j-1}+2(1+\gamma),&\mbox{if $j$ is even},
\end{cases}
\end{equation}
for $j=2,3,\ldots,k$.
\end{lemma}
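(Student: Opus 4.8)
The plan is to unfold the defining recursions \eqref{def:adodd}--\eqref{def:adeven} by detaching the top summand and then feeding in the definition of $\al_{j-1}$, which has the opposite parity of $j$. First I would introduce the abbreviation $T_j:=\sum_{i<j}\frac{(-1)^i}{\ga^{\sigma(i)}}\al_i$, so that \eqref{def:adodd}--\eqref{def:adeven} say precisely $\aj=2(1+T_j)$ when $j$ is odd and $\aj=2(1-\ga T_j)$ when $j$ is even. Since $\sigma(i)=0$ for odd $i$ and $\sigma(i)=1$ for even $i$, the weight $\frac{(-1)^i}{\ga^{\sigma(i)}}$ equals $-1$ on odd $i$ and $\frac1\ga$ on even $i$; detaching the $i=j-1$ term then gives $T_j=T_{j-1}+\frac{(-1)^{j-1}}{\ga^{\sigma(j-1)}}\al_{j-1}$ for $j=2,\dots,k$.

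Then I would split according to the parity of $j$. If $j$ is odd, $j-1$ is even, the detached term is $\al_{j-1}/\ga$, and the even-index formula applied to $j-1$ reads $\al_{j-1}=2(1-\ga T_{j-1})$, i.e. $\ga T_{j-1}=1-\tfrac{\al_{j-1}}{2}$; substituting yields $T_j=\tfrac1\ga\bigl(1+\tfrac{\al_{j-1}}{2}\bigr)$ and hence $\aj=2(1+T_j)=\tfrac1\ga\al_{j-1}+2(1+\tfrac1\ga)$. If $j$ is even, $j-1$ is odd, the detached term is $-\al_{j-1}$, and the odd-index formula gives $\al_{j-1}=2(1+T_{j-1})$, i.e. $T_{j-1}=\tfrac{\al_{j-1}}{2}-1$; then $T_j=-1-\tfrac{\al_{j-1}}{2}$ and $\aj=2(1-\ga T_j)=\ga\al_{j-1}+2(1+\ga)$. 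This is exactly \eqref{eq:arec}.

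The only genuine obstacle is sign-and-weight bookkeeping: passing from index $j$ to index $j-1$ flips the parity, so it simultaneously flips which of the two defining identities is in force and flips the sign (and the $\ga$-power) of the term one detaches; getting this consistent is the whole content of the argument. Once the conventions are pinned down, the proof is the one-line substitution above, with no estimates involved. This recursion is the base ingredient from which the closed forms \eqref{def:a} in Proposition~\ref{prop:aimain} — and then the sum identities \eqref{eq:ajsumodd}--\eqref{eq:suma} — will be obtained by solving the two coupled linear recurrences with initial value $\al_1=2$.
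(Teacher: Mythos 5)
Your proof is correct and is essentially the paper's own argument: detach the $i=j-1$ summand from the defining sum, use the opposite-parity defining identity at index $j-1$ to express the remaining sum in terms of $\al_{j-1}$, and substitute. The $T_j$ notation is just a cleaner packaging of the same computation, and the parity/sign bookkeeping you describe matches the paper exactly.
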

\begin{proof}
Suppose $j$ is odd. Then, in view of \eqref{def:adodd} and the fact that $j-1$ is even, we have
\begin{align*}
\aj=&2\left(1+\sum_{i<j}\frac{(-1)^i}{\ga^{\si}}\ai\right)
=2\left(1+\frac{(-1)^{j-1}}{\ga^{\sigma(j-1)}}\al_{j-1}+\sum_{i<j-1}\frac{(-1)^i}{\ga^{\si}}\ai\right)\\
=&2\left(1+\frac{1}{\ga}\al_{j-1}+\sum_{i<j-1}\frac{(-1)^i}{\ga^{\si}}\ai\right).
\end{align*}
In view of \eqref{def:adeven}, we have
\[
\al_{j-1}=2\left(1-\gamma\sum_{i<j-1}\frac{(-1)^i}{\ga^{\si}}\ai\right)
=2-2\ga\sum_{i<j-1}\frac{(-1)^i}{\ga^{\si}}\ai
\]
and therefore
\[
\sum_{i<j-1}\frac{(-1)^i}{\ga^{\si}}\ai=\frac{1}{\ga}-\frac{1}{2\ga}\al_{j-1}.
\]
We conclude that
\[
\aj=2\left(1+\frac{1}{\ga}\al_{j-1}+\frac{1}{\ga}-\frac{1}{2\ga}\al_{j-1}\right)
=2\left(1+\frac{1}{\ga}+\frac{1}{2\ga}\al_{j-1}\right),
\]
and \eqref{eq:arec} is established for odd indices $j$.
\par
Similarly, suppose $j$ is even. Then, in view of \eqref{def:adeven} and the fact that $j-1$ is odd, 
we have
\begin{align*}
\aj=&2\left(1-\gamma\sum_{i<j}\frac{(-1)^i}{\ga^{\si}}\ai\right)
=2\left(1-\gamma\frac{(-1)^{j-1}}{\ga^{\sigma(j-1)}}\al_{j-1}-\gamma\sum_{i<j-1}\frac{(-1)^i}{\ga^{\si}}\ai\right)\\
=&2\left(1+\ga\al_{j-1}-\gamma\sum_{i<j-1}\frac{(-1)^i}{\ga^{\si}}\ai\right).
\end{align*}
Since $j-1$ is odd, we have from \eqref{def:adodd} that
\[
\al_{j-1}=2\left(1+\sum_{i<j-1}\frac{(-1)^i}{\ga^{\si}}\ai\right)
=2+2\sum_{i<j-1}\frac{(-1)^i}{\ga^{\si}}\ai,
\]
that is,
\[
\sum_{i<j-1}\frac{(-1)^i}{\ga^{\si}}\ai
=\frac{1}{2}\al_{j-1}-1.
\]
We deduce that
\[
\aj=2\left(1+\ga\al_{j-1}-\frac{\ga}{2}\al_{j-1}+\ga\right)
=2(1+\ga)\al_{j-1}+\ga\al_{j-1},
\]
and the recursive formula \eqref{eq:arec} is also established for all even indices $j$.
\end{proof}
We also use the following results,
whose proof is elementary.
\begin{lemma}
\label{lem:sums}
Let $k\in\mathbb N$.
Then,
\begin{equation*}
\begin{aligned}
&\sum_{\stackrel{1\le j\le k}{j\ \mathrm{odd}}}1=
\begin{cases}
\frac{k+1}{2},&\mbox{if $k$ is odd}\\
\frac{k}{2},&\mbox{if $k$ is even}
\end{cases};
&&
\sum_{\stackrel{1\le j\le k}{j\ \mathrm{even}}}1=
\begin{cases}
\frac{k-1}{2},&\mbox{if $k$ is odd}\\
\frac{k}{2},&\mbox{if $k$ is even}
\end{cases}
\end{aligned}
\end{equation*}
and
\begin{equation*}
\begin{aligned}
&
\sum_{\stackrel{1\le j\le k}{j\ \mathrm{odd}}}j=
\begin{cases}
\frac{(k+1)^2}{4},&\mbox{if $k$ is odd}\\
\frac{k^2}{4},&\mbox{if $k$ is even}
\end{cases};
&&
\sum_{\stackrel{1\le j\le k}{j\ \mathrm{even}}}j=
\begin{cases}
\frac{(k-1)(k+1)}{4},&\mbox{if $k$ is odd}\\
\frac{k(k+2)}{4},&\mbox{if $k$ is even}
\end{cases}.
\end{aligned}
\end{equation*}
\end{lemma}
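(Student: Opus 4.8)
The plan is to split into the two parities of $k$ and reduce each of the four sums to a standard arithmetic progression. First I would dispose of the two counting identities: the odd elements of $\{1,2,\ldots,k\}$ are $1,3,5,\ldots$, so if $k$ is odd, say $k=2\ell-1$ with $\ell\in\mathbb N$, then there are exactly $\ell=(k+1)/2$ of them and $\ell-1=(k-1)/2$ even ones; if $k$ is even, say $k=2\ell$, there are $\ell=k/2$ of each. This already yields the first displayed identity.

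For the weighted sums I would invoke the two elementary facts that the sum of the first $p$ odd natural numbers equals $p^2$ and that $\sum_{i=1}^{p}i=p(p+1)/2$. Writing the odd indices in $\{1,\ldots,k\}$ as $1,3,\ldots,2p-1$, with $p$ the count found above, gives $\sum_{1\le j\le k,\ j\ \mathrm{odd}}j=p^2$, which is $(k+1)^2/4$ for $k$ odd and $k^2/4$ for $k$ even. Writing the even indices as $2,4,\ldots,2q$, with $q$ their count, gives $\sum_{1\le j\le k,\ j\ \mathrm{even}}j=2\sum_{i=1}^{q}i=q(q+1)$, which equals $\frac{(k-1)(k+1)}{4}$ for $k$ odd (since then $q=(k-1)/2$) and $\frac{k(k+2)}{4}$ for $k$ even (since then $q=k/2$). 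Collecting the four cases gives the second displayed identity.

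A quick sanity check, which I would record, is that in either parity the odd-index and even-index sums add up to $\sum_{j=1}^{k}j=k(k+1)/2$; alternatively the whole lemma can be obtained by a one-line induction on $k$ with step $2$. There is no real obstacle here: the only thing to watch is keeping the parity bookkeeping of $k$ consistent across all four sums, which is why I would fix the notation $k=2\ell-1$ or $k=2\ell$ once and for all at the start.
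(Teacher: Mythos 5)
Your argument is correct and complete: the reduction to the counts of odd/even indices, together with the identities $1+3+\cdots+(2p-1)=p^2$ and $2+4+\cdots+2q=q(q+1)$, yields all four formulas, and each case checks out. The paper offers no proof at all (the lemma is introduced with ``whose proof is elementary''), so your write-up simply supplies the standard argument the authors had in mind; there is nothing to compare or correct.
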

Now we can provide the proof of Proposition~\ref{prop:aimain}.
\begin{proof}[Proof of Proposition~\ref{prop:aimain}]
Proof of \eqref{def:a}.
We argue by induction. We already know from \eqref{eq:aifirstvalues} that
$\al_1=2$ and $\al_2=2(1+2\ga)$.
\par
Suppose \eqref{def:a} holds true for all $i<j$, with $j$ an odd index.
Then, in view of \eqref{eq:arec} and the induction assumption we have:
\[
\aj=\frac{1}{\ga}\al_{j-1}+2(1+\frac{1}{\ga})
=\frac{2}{\ga}[(1+\ga)(j-1)-1]+2(1+\frac{1}{\ga})
=2[(1+\frac{1}{\ga})j-\frac{1}{\ga}],
\]
and \eqref{def:a} is established in this case.
\par
Suppose \eqref{def:a} holds true for all $i<j$, with $j$ an even index.
Then, in view of \eqref{eq:arec} and the induction assumption we have:
\[
\aj=\ga\al_{j-1}+2(1+\ga)
=2\ga[(1+\frac{1}{\ga})(j-1)-\frac{1}{\ga}]+2(1+\ga)
=2[(1+\ga)j-1].
\]
Now \eqref{def:a} is completely established.
\par
Proof of \eqref{eq:ajsumodd}.
Recall from \eqref{def:a} that if $j$ is odd, then $\aj=2[(1+\frac{1}{\ga})j-\frac{1}{\ga}]$.
Hence, we compute:
\begin{equation*}
\sum_{\stackrel{1\le j\le k}{j\ \mathrm{odd}}}\aj
=2(1+\frac{1}{\ga})\sum_{\stackrel{1\le j\le k}{j\ \mathrm{odd}}}j-\frac{2}{\ga}\sum_{\stackrel{1\le j\le k}{j\ \mathrm{odd}}}1.
\end{equation*}
In view of Lemma~\ref{lem:sums}, for $k$ odd we deduce that:
\begin{equation*}
\sum_{\stackrel{1\le j\le k}{j\ \mathrm{odd}}}\aj
=(1+\frac{1}{\ga})\frac{(k+1)^2}{2}-\frac{k+1}{\ga}
=\frac{k+1}{2}[(1+\frac{1}{\ga})k+1-\frac{1}{\ga}].
\end{equation*}
Similarly, for $k$ even we deduce that
\begin{equation*}
\sum_{\stackrel{1\le j\le k}{j\ \mathrm{odd}}}\aj
=(1+\frac{1}{\ga})\frac{k^2}{2}-\frac{k}{\ga}
=k[(1+\frac{1}{\ga})\frac{k}{2}-\frac{1}{\ga}].
\end{equation*}
Proof of \eqref{eq:ajsumeven}.
Recall from \eqref{def:a} that if $j$ is even, then $\aj=2[(1+\ga)j-1]$.
In view of Lemma~\ref{lem:sums}, for $k$ odd we deduce that:
\begin{equation*}
\frac{1}{\ga}\sum_{\stackrel{1\le j\le k}{j\ \mathrm{even}}}\aj
=2(1+\frac{1}{\ga})\sum_{\stackrel{1\le j\le k}{j\ \mathrm{even}}}j
-\frac{2}{\ga}\sum_{\stackrel{1\le j\le k}{j\ \mathrm{even}}}1.
\end{equation*}
In view of Lemma~\ref{lem:sums}, for $k$ odd we deduce that:
\begin{equation*}
\frac{1}{\ga}\sum_{\stackrel{1\le j\le k}{j\ \mathrm{even}}}\aj
=(1+\frac{1}{\ga})\frac{(k-1)(k+1)}{2}-\frac{k-1}{\ga}
=\frac{k-1}{2}[(1+\frac{1}{\ga})k+1-\frac{1}{\ga}].
\end{equation*}
Similarly, for $k$ even we deduce that
\begin{equation*}
\frac{1}{\ga}\sum_{\stackrel{1\le j\le k}{j\ \mathrm{even}}}\aj
=(1+\frac{1}{\ga})\frac{k(k+2)}{2}-\frac{k}{\ga}
=\frac{k}{2}[(1+\frac{1}{\ga})(k+1)+1-\frac{1}{\ga}].
\end{equation*}
\par
Proof of \eqref{eq:suma}.
The proof of \eqref{eq:suma} follows from \eqref{eq:ajsumodd}--\eqref{eq:ajsumeven}.
However, a proof may also be derived independently from \eqref{def:adodd}--\eqref{def:adeven}
and \eqref{def:a} with $j=k$.
Indeed, suppose $k$ is odd.
In view of \eqref{def:adodd} we have
\[
\al_k=2\left(1+\sum_{i<k}\frac{(-1)^i}{\ga^{\sigma(i)}}\ai\right).
\]
Hence, we may write
\begin{align*}
\sum_{i=1}^k\frac{(-1)^i}{\ga^{\sigma(i)}}\ai
=\sum_{i=1}^{k-1}\frac{(-1)^i}{\ga^{\sigma(i)}}\ai+\frac{(-1)^k}{\ga^{\sigma(k)}}\al_k
=\frac{\al_k}{2}-1-\al_k=-1-\frac{\al_k}{2}.
\end{align*}
In view of \eqref{def:a} with $j=k$ we have
\[
1+\frac{\al_k}{2}=1+(1+\frac{1}{\ga})k-\frac{1}{\ga}.
\]
Hence, \eqref{eq:suma} is established for $k$ odd.
\par
Similarly, suppose $k$ is even.
In view of \eqref{def:adeven} we have
\[
\al_k=2\left(1-\ga\sum_{i<k}\frac{(-1)^i}{\ga^{\sigma(i)}}\ai\right).
\]
Hence, we may write
\begin{align*}
\sum_{i=1}^k\frac{(-1)^i}{\ga^{\sigma(i)}}\ai=\sum_{i=1}^{k-1}\frac{(-1)^i}{\ga^{\sigma(i)}}\ai+\frac{(-1)^k}{\ga^{\sigma(k)}}\al_k
=\frac{1}{\ga}-\frac{\al_k}{2\ga}+\frac{\al_k}{\ga}=\frac{1}{\ga}+\frac{\al_k}{2\ga}.
\end{align*}
Now, in view of \eqref{def:a} with $j=k$ we conclude that
\[
\frac{1}{\ga}+\frac{\al_k}{2\ga}=\frac{1}{\ga}+\frac{1}{\ga}[(1+\ga)k-1]=(1+\frac{1}{\ga})k.
\]
The asserted formula~\eqref{eq:suma} follows.
\end{proof}
\begin{rmk}
\label{rmk:ainotincreasing}
Unlike the case $\ga=1$, if $0<\ga<1$ the sequence $\aj$ is not necessarily monotonically increasing
with respect to $j=1,2,\ldots,k$.
\end{rmk}
Indeed, the following holds true.
\medskip
\par\noindent
{\em Claim.}
For all $h\in\mathbb N$ such that $h>(1-\ga^2)^{-1}$ there holds
$\al_{2h}<\al_{2h-1}$.
\par\noindent
Using the explicit values of $\al_{2h},\al_{2h-1}$ as in \eqref{def:a}, we compute:
\[
\frac{1}{2}(\al_{2h}-\al_{2h-1})=(1+\ga)2h-1-(1+\frac{1}{\ga})(2h-1)+\frac{1}{\ga}
=2[(\ga-\frac{1}{\ga})h+\frac{1}{\ga}]
=\frac{2}{\ga}[1-(1-\ga^2)h].
\]
The claim follows.
\subsection*{Properties of the concentration parameters $\dej$}
In this subsection we compute the power decay rates of the concentration parameters $\dej$
as $\rho\to0^+$, $j=1,2,\ldots,k$.
\par
Let $\ka_j=\ka_j(\ga,\tau,h(0),k)>0$, $j=1,2,\ldots,k$ be defined by
\begin{equation*}
\begin{aligned}
&\ka_k=
\begin{cases}
\frac{e^{-4\pi h(0)\Akcal}}{2\al_k^2},
&\mbox{if $k$ is odd;}\\
\frac{\tau\ga e^{4\pi h(0)\Akcal}}{2\al_k^2},
&\mbox{if $k$ is even;}
\end{cases}\\
&\ka_j=
\begin{cases}
\frac{(\tau\ga)^{1/\ga}}{2^{1+1/\ga}\aj^2\al_{j+1}^{2/\ga}},
&\mbox{if $j$ is odd}\\
\frac{\tau\ga}{2^{1+\ga}\aj^2\al_{j+1}^{2\ga}},
&\mbox{if $j$ is even,}\qquad j=1,2,\ldots,k-1.
\end{cases}
\end{aligned}
\end{equation*}
Let $c_j=c_j(\ga,\tau,h(0),k)>0$, $j=1,2,\ldots,k$
be defined by:
\begin{equation*}
c_j=
\begin{cases}
\ka_j\,\ka_{j+1}^{1/\ga}\,\ka_{j+2}\,\ka_{j+3}^{1/\ga}\cdots \ka_k,
&\mbox{if $k$ is odd, $j$ is odd;}\\
\ka_j\,\ka_{j+1}^\ga\,\ka_{j+2}\,\ka_{j+3}^\ga
 \cdots \ka_k^\ga,
&\mbox{if $k$ is odd, $j$ is even;}\\
\ka_j\,\ka_{j+1}^{1/\ga}\,\ka_{j+2}\,\ka_{j+3}^{1/\ga}\cdots \ka_k^{1/\ga},
&\mbox{if $k$ is even, $j$ is odd;}\\
\ka_j\,\ka_{j+1}^\ga\,\ka_{j+2}\,\ka_{j+3}^\ga
 \cdots \ka_k,
&\mbox{if $k$ is even, $j$ is even.}
\end{cases}
\end{equation*}
Let $d_j=d_j(\ga,\tau,h(0),k)>0$ be defined by
\begin{equation}
\label{def:dj}
d_j=c_j^{1/\aj}=
\begin{cases}
c_j^{1/\{2[(1+\frac{1}{\ga})j-\frac{1}{\ga}]\}}&\mbox{if $j$ is odd}\\
c_j^{1/\{2[(1+\ga)j-1]\}}&\mbox{if $j$ is even}.
\end{cases}
\end{equation}
With the above definitions, we have:
\begin{prop}
\label{prop:deimain}
The following power decay rates hold true
for all $j=1,2,\ldots,k$:
\begin{equation}
\begin{aligned}
\label{eq:deltadecay1}
&\dej^{\aj}=c_j\,\rho^{r_j},\ 
\dej=d_j\,\rho^{s_j},&&\mbox{for all }k=1,2,\ldots,k\\
&\frac{\dej}{\de_{j+1}}=\frac{d_j}{d_{j+1}}\,\rho^{\qj},
&&\mbox{for all }k=1,2,\ldots,k-1
\end{aligned}
\end{equation}
where $r_j=r_j(\ga,k)>0$ is defined by
\begin{equation}
\label{def:rj}
r_j=
\begin{cases}
(\ga+1)(k-j)+\ga,&\mbox{$k$ odd, $j$ even}\\
(1+\frac{1}{\ga})(k-j)+1,&\mbox{$k$ odd, $j$ odd}\\
(1+\frac{1}{\ga})(k-j)+\frac{1}{\ga},&\mbox{$k$ even, $j$ odd}\\
(1+\ga)(k-j)+1,&\mbox{$k$ even, $j$ even};
\end{cases}
\end{equation}
$s_j=s_j(\ga,k)>0$ is defined by
\begin{equation}
\label{def:sj}
s_j=
\begin{cases}
\frac{(1+\ga)(k-j)+\ga}{2[(1+\ga)j-1]},&\mbox{if $k$ is odd};\\
\frac{(1+\ga)(k-j)+1}{2[(1+\ga)j-1]},&\mbox{if $k$ is even}.
\end{cases}
\end{equation}
and $q_j=q_j(\ga,k)>0$ is defined by
\begin{equation}
\label{def:qkgi}
\qj=\begin{cases}
\frac{(1+\ga)[(1+\ga)k+\ga-1]}{2[(1+\ga)j+\ga][(1+\ga)j-1]},&\mbox{if $k$ is odd;}\\
\frac{(1+\ga)^2k}{2[(1+\ga)j+\ga][(1+\ga)j-1]},
&\mbox{if $k$ is even}.
\end{cases}
\end{equation}
\end{prop}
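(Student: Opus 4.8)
The plan is to exploit the fact that the defining relations \eqref{def:adodd}--\eqref{def:adeven} express $\ln\dej^{\aj}$ in terms of $\de_{j+1},\dots,\de_k$ only, so that a \emph{backward} induction on $j$, starting at $j=k$, applies. The first step is to turn \eqref{def:adodd}--\eqref{def:adeven} into a clean \emph{two-term} recursion relating $\dej^{\aj}$ to $\de_{j+1}^{\al_{j+1}}$ alone. Fix $j<k$; in the sum $\sum_{i>j}\frac{(-1)^i}{\ga^{\sigma(i)}}\ln\dei^{\ai}$ occurring in the relation for the index $j$, split off the $i=j+1$ term and observe that the leftover sum $\sum_{i>j+1}\frac{(-1)^i}{\ga^{\sigma(i)}}\ln\dei^{\ai}$ is, up to explicit constants, exactly the combination appearing in the relation for the index $j+1$. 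Substituting and using that $j$ and $j+1$ have opposite parities (and that the $\Akcal$-contributions cancel) one obtains
\begin{equation*}
\dej^{\aj}=
\begin{cases}
\kappa_j\,(\de_{j+1}^{\al_{j+1}})^{1/\ga}\,\rho^{1+1/\ga},&\mbox{$j$ odd},\ j<k,\\
\kappa_j\,(\de_{j+1}^{\al_{j+1}})^{\ga}\,\rho^{1+\ga},&\mbox{$j$ even},\ j<k,
\end{cases}
\end{equation*}
with $\kappa_j$ as defined before the proposition, while for $j=k$ the sum over $i>k$ is empty and the relevant defining relation reduces directly to $\de_k^{\al_k}=\kappa_k\,\rho$.

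Next I would run the backward induction. Inserting $\de_{j+1}^{\al_{j+1}}=c_{j+1}\,\rho^{r_{j+1}}$ into the two-term recursion yields $\dej^{\aj}=c_j\,\rho^{r_j}$ with $c_j=\kappa_j c_{j+1}^{1/\ga}$ and $r_j=\frac{1}{\ga} r_{j+1}+1+\frac{1}{\ga}$ when $j$ is odd, and $c_j=\kappa_j c_{j+1}^{\ga}$ and $r_j=\ga r_{j+1}+1+\ga$ when $j$ is even, with base values $c_k=\kappa_k$, $r_k=1$. It then remains to check that the closed forms for $c_j$ (as in the statement) and for $r_j$ (as in \eqref{def:rj}) solve these recursions: for $c_j$ this is immediate from the telescoping structure of the alternating product of the $\kappa_i$'s in the four parity cases of $(k,j)$, and for $r_j$ it is an elementary algebraic identity, checked separately for $k$ odd and $k$ even. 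This establishes the first line of \eqref{eq:deltadecay1}.

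Then I would take $\aj$-th roots, obtaining $\dej=(\dej^{\aj})^{1/\aj}=c_j^{1/\aj}\,\rho^{r_j/\aj}=d_j\,\rho^{s_j}$ with $s_j=r_j/\aj$; substituting the explicit value of $\aj$ from \eqref{def:a} (namely $\aj/2=(1+1/\ga)j-1/\ga$ for $j$ odd and $\aj/2=(1+\ga)j-1$ for $j$ even) and clearing denominators, the four expressions $r_j/\aj$ collapse onto the two formulae in \eqref{def:sj}, i.e.\ they become independent of the parity of $j$. Finally, $\dej/\de_{j+1}=(d_j/d_{j+1})\,\rho^{\,s_j-s_{j+1}}$, and combining the fractions $s_j$ and $s_{j+1}$ over the common denominator $2[(1+\ga)j-1][(1+\ga)j+\ga]$ gives $s_j-s_{j+1}=\qj$ as in \eqref{def:qkgi}. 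Positivity of $r_j$, $s_j$, $\qj$ — hence $\de_j=o(\de_{j+1})$ — is manifest from these closed forms, since $j\le k$ and $(1+\ga)j-1\ge\ga>0$ for $j\ge1$. The main obstacle is purely bookkeeping in the derivation of the two-term recursion: the alternating weights $(-1)^i/\ga^{\sigma(i)}$ and the $\Akcal$-contribution must be tracked carefully through the odd and even relations; once the recursion is in hand, everything after is a routine, if slightly lengthy, induction and simplification.
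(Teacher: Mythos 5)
Your proposal is correct and follows essentially the same route as the paper: the paper's Lemma~\ref{lem:deltarecursive} is exactly your two-term recursion (obtained by the same splitting of the $i=j+1$ term and cancellation of the $\Akcal$-contributions), the first decay rate is then proved by the same backward induction in the four parity cases, $s_j=r_j/\aj$ gives the second, and $\qj=s_j-s_{j+1}$ is the same elementary computation the paper relegates to the Appendix. Your formulation of the induction via the scalar recursions $r_j=r_{j+1}/\ga+1+1/\ga$ ($j$ odd) and $r_j=\ga r_{j+1}+1+\ga$ ($j$ even) with $r_k=1$ is a slightly tidier packaging of the same argument.
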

In order to prove Proposition~\ref{prop:deimain}, we first establish a recursive formula.
\begin{lemma}
\label{lem:deltarecursive}
We have
\begin{equation}
\label{eq:deltarecursive}
\begin{aligned}
&\de_k^{\al_k}=\ka_k\,\rho;\\
&\dej^{\aj}=
\begin{cases}
\ka_j\,\rho^{1+1/\ga}\left(\delta_{j+1}^{\al_{j+1}}\right)^{1/\ga},
&\mbox{if $j$ is odd}\\
\ka_j\,\rho^{1+\ga}\left(\delta_{j+1}^{\al_{j+1}}\right)^{\ga},
&\mbox{if $j$ is even},
\qquad j=1,2,\ldots,k-1.
\end{cases}
\end{aligned}
\end{equation}
\end{lemma}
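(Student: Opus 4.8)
The statement to prove is Lemma~\ref{lem:deltarecursive}, giving the recursive formulae for $\de_j^{\al_j}$. The plan is to unwind the defining conditions \eqref{def:adodd}--\eqref{def:adeven} for $\ln(2\al_j^2\de_j^{\al_j})$, separating out the $i=j+1$ term from the sum $\sum_{i>j}$ so that it appears with the right exponent $1/\ga$ (when $j$ is odd) or $\ga$ (when $j$ is even), and absorbing everything else into the constant $\ka_j$.

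First I would treat the base case $j=k$. When $k$ is odd, the sum $\sum_{i>k}$ in \eqref{def:adodd} is empty, so $\ln(2\al_k^2\de_k^{\al_k})=-4\pi h(0)\Akcal+\ln\rho$, where I have used the definition \eqref{def:Ak} of $\Akcal$ and that $(-1)^i/\ga^{\sigma(i)}$ is exactly the coefficient appearing there. Exponentiating gives $\de_k^{\al_k}=\frac{e^{-4\pi h(0)\Akcal}}{2\al_k^2}\rho=\ka_k\rho$, matching the definition of $\ka_k$. The case $k$ even is identical using \eqref{def:adeven}: the empty sum leaves $\ln(2\al_k^2\de_k^{\al_k})=4\pi\ga h(0)\Akcal+\ln(\rho\tau\ga)$, so $\de_k^{\al_k}=\frac{\tau\ga e^{4\pi h(0)\Akcal}}{2\al_k^2}\rho=\ka_k\rho$.

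Next, for $1\le j\le k-1$ with $j$ odd, I start from \eqref{def:adodd} and peel off $i=j+1$ (which is even, so $(-1)^{j+1}/\ga^{\sigma(j+1)}=1/\ga$):
\begin{align*}
\ln(2\al_j^2\de_j^{\al_j})
&=\frac{2}{\ga}\ln\de_{j+1}^{\al_{j+1}}+2\sum_{i>j+1}\frac{(-1)^i}{\ga^{\sigma(i)}}\ln\de_i^{\al_i}-4\pi h(0)\Akcal+\ln\rho.
\end{align*}
Here the awkward-looking point, and the main thing to get right, is that the leftover sum $\sum_{i>j+1}\frac{(-1)^i}{\ga^{\sigma(i)}}\ln\de_i^{\al_i}$ together with the $-4\pi h(0)\Akcal$ term and the residual powers of $\rho$ must collapse into a $\rho$-independent constant. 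This is not obvious from \eqref{def:adodd} alone; rather, one substitutes the defining relation for $\de_{j+1}$ (i.e. applies \eqref{def:adeven} with index $j+1$, solved for $\sum_{i>j+1}$) to eliminate precisely that tail sum, leaving only $\ln\de_{j+1}^{\al_{j+1}}$, $\ln\al_{j+1}$, and elementary constants involving $\tau,\ga,h(0)$. Carrying this through, the $h(0)\Akcal$ contributions cancel and one is left with
\[
\ln(2\al_j^2\de_j^{\al_j})=\frac{1}{\ga}\ln\de_{j+1}^{\al_{j+1}}+\Big(1+\frac{1}{\ga}\Big)\ln\rho+\ln\frac{(\tau\ga)^{1/\ga}}{2^{1/\ga}\al_{j+1}^{2/\ga}},
\]
which exponentiates to $\de_j^{\al_j}=\ka_j\,\rho^{1+1/\ga}(\de_{j+1}^{\al_{j+1}})^{1/\ga}$ with $\ka_j=\frac{(\tau\ga)^{1/\ga}}{2^{1+1/\ga}\al_j^2\al_{j+1}^{2/\ga}}$, exactly as defined. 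The case $j$ even is symmetric: $j+1$ is odd so the peeled coefficient is $1$, one uses \eqref{def:adodd} with index $j+1$ to kill the tail, and the same bookkeeping yields $\de_j^{\al_j}=\ka_j\,\rho^{1+\ga}(\de_{j+1}^{\al_{j+1}})^{\ga}$ with $\ka_j=\frac{\tau\ga}{2^{1+\ga}\al_j^2\al_{j+1}^{2\ga}}$.

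The main obstacle is purely the algebra in the previous paragraph: verifying that when the tail sum $\sum_{i>j+1}$ is replaced via the companion defining relation, all the $h(0)$-dependent terms and the $\ln\rho$-powers combine to give exactly the stated $\ka_j$ and exactly the exponents $1+1/\ga$ resp.\ $1+\ga$. It helps to do this once symbolically: write $L_j:=\ln(2\al_j^2\de_j^{\al_j})$ and, for odd $j$, observe that \eqref{def:adodd} reads $L_j=2\sum_{i>j}\mu_i\ln\de_i^{\al_i}-4\pi h(0)\Akcal+\ln\rho$ with $\mu_i=(-1)^i/\ga^{\sigma(i)}$, while \eqref{def:adeven} for the even index $j+1$ reads $L_{j+1}=-2\ga\sum_{i>j+1}\mu_i\ln\de_i^{\al_i}+4\pi\ga h(0)\Akcal+\ln(\rho\tau\ga)$; solving the latter for $\sum_{i>j+1}\mu_i\ln\de_i^{\al_i}$ and inserting into the former makes the cancellation transparent, since $\mu_{j+1}=1/\ga$. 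No new ingredients beyond \eqref{def:adodd}--\eqref{def:adeven}, the definition \eqref{def:Ak} of $\Akcal$, and the $\ka_j$ definitions are needed.
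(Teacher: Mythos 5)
Your proposal is correct and follows essentially the same route as the paper: the base case $j=k$ comes from the empty tail sum in \eqref{def:adodd} (resp.\ \eqref{def:adeven}), and for $j<k$ one peels off the $i=j+1$ term and substitutes the companion defining relation at index $j+1$ to eliminate the remaining tail, after which the $h(0)\mathcal A_k$ terms cancel and the stated $\ka_j$ and $\rho$-exponents drop out. The intermediate identity you record for odd $j$ matches the paper's computation exactly.
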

\begin{proof}
Suppose $k$ is odd. Then, formula~\eqref{def:adodd} takes the form
\[
\ln(2\al_k^2\de_k^{\al_k})=-4\pi h(0)\Akcal+\ln\rho.
\]
Recalling the explicit value of $\Akcal$ as in \eqref{eq:suma} and 
of $\al_k$ as in \eqref{def:adodd}, we conclude the proof.
\par
Similarly, suppose $k$ is even.
Then, formula~\eqref{def:adeven} takes the form
\[
\ln(2\al_k^2\de_k^{\al_k})=4\pi\ga h(0)\Akcal+\ln(\rho\tau\ga).
\]
Recalling the explicit value of $\Akcal$ as in \eqref{eq:suma} and 
of $\al_k$ as in \eqref{def:adeven}, we conclude the proof.
\par
Suppose $j$ is odd, $j\le k-1$.
Using \eqref{def:adodd} and observing that $j+1$ is even we have
\begin{align*}
\ln(2\aj^2\dej^{\aj})=&2\sum_{i>j}\frac{(-1)^i}{\ga^{\si}}\ln\dei^{\ai}
-4\pi h(0)\Akcal+\ln\rho\\
=&\frac{2}{\ga}\ln\de_{j+1}^{\al_{j+1}}+2\sum_{i>j+1}\frac{(-1)^i}{\ga^{\si}}\ln\dei^{\ai}
-4\pi h(0)\Akcal+\ln\rho,
\end{align*}
where $\Akcal$ is defined in \eqref{def:Ak}.
On the other hand, using \eqref{def:adeven} we have
\begin{equation*}
2\sum_{i>j+1}\frac{(-1)^i}{\ga^{\si}}\ln\dei^{\ai}
=-\frac{1}{\ga}\ln(2\al_{j+1}^2\de_{j+1}^{\al_{j+1}})+4\pi h(0)\Akcal
+\frac{1}{\ga}\ln(\rho\tau\ga).
\end{equation*}
We deduce that
\[
\ln(2\aj^2\dej^{\aj})=\frac{2}{\ga}\ln\de_{j+1}^{\al_{j+1}}-\frac{1}{\ga}\ln(2\al_{j+1}^2\de_{j+1}^{\al_{j+1}})
+\frac{1}{\ga}\ln(\rho\tau\ga)+\ln\rho
\]
and consequently
\[
2\aj^2\dei^{\aj}=\frac{(\tau\ga)^{1/\ga}}{(2\al_{j+1}^2)^{1/\ga}}\rho^{1+1/\ga}\left(\de_{j+1}^{\al_{j+1}}\right)^{1/\ga}.
\]
Hence, the asserted recursive formula follows for $j$ odd.
\par
Similarly, suppose that $j$ is even, $j\le k-1$.
In view of \eqref{def:adeven} and observing that $j+1$ is odd, we have
\begin{align*}
\ln(2\aj^2\dej^{\aj})=&-2\ga\sum_{i>j}\frac{(-1)^i}{\ga^{\si}}\ln\dei^{\ai}+4\pi\ga h(0)\Akcal+\ln(\rho\tau\ga)\\
=&2\ga\ln\de_{j+1}^{\al_{j+1}}-2\ga\sum_{i>j+1}\frac{(-1)^i}{\ga^{\si}}\ln\dei^{\ai}+4\pi\ga h(0)\Akcal+\ln(\rho\tau\ga).
\end{align*}
Since $j+1$ is odd, we have from \eqref{def:adodd}
\[
2\sum_{i>j+1}\frac{(-1)^i}{\ga^{\si}}\ln\dei^{\ai}=\ln(2\al_{j+1}^2\de_{j+1}^{\al_{j+1}})
+4\pi h(0)\Akcal-\ln\rho.
\]
We deduce that
\[
\ln(2\aj^2\dej^{\aj})=2\ga\ln\de_{j+1}^{\al_{j+1}}-\ga\ln(2\al_{j+1}^2\de_{j+1}^{\al_{j+1}})
+\ga\ln\rho+\ln(\rho\tau\ga)
\]
and finally
\[
2\aj^2\dej^{\aj}=\frac{\tau\ga}{(2\al_{j+1}^2)^\ga}\rho^{1+\ga}\left(\de_{j+1}^{\al_{j+1}}\right)^\ga.
\]
The asserted recursive formula is now completely established.
\end{proof}
\begin{proof}[Proof of Proposition~\ref{prop:deimain}]
Proof of the first decay rate in \eqref{eq:deltadecay1}.
We equivalently show that
\begin{equation}
\label{eq:deltadecay}
\de_{k-j}^{\al_{k-j}}=
\begin{cases}
c_{k-j}\,\rho^{(\ga+1)j+\ga},&\mbox{$k$ odd, $j$ odd}\\
c_{k-j}\,\rho^{(1+\frac{1}{\ga})j+1},&\mbox{$k$ odd, $j$ even}\\
c_{k-j}\,\rho^{(1+\frac{1}{\ga})j+\frac{1}{\ga}},&\mbox{$k$ even, $j$ odd}\\
c_{k-j}\,\rho^{(1+\ga)j+1},&\mbox{$k$ even, $j$ even},
\end{cases}
\end{equation}
where $d_{k-j}$ is defined in \eqref{def:dj}.
\par
We argue by induction.
Suppose $k$ is odd.
For $j=0$ we have $\de_k^{\al_k}=\ka_k\,\rho=c_k\,\rho$ and the formula holds true
in this case.
For $j=1$ we have, since $k-1$ is even,
\[
\de_{k-1}^{\al_{k-1}}=\ka_{k-1}\,\rho^{1+\ga}(\de_k^{\al_k})^\ga
=\ka_{k-1}\,\rho^{1+\ga}(\ka_k\,\rho)^\ga=\ka_{k-1}\ka_k^\ga\rho^{(1+\ga)+\ga}
=c_{k-1}\,\rho^{(1+\ga)+\ga},
\]
and the formula is verified for $j=1$ as well.
\par
Hence, assume that the formula is true for all $i\le j<k$ with $j$ even.
Then, $k-j-1$ is even,
\[
\begin{aligned}
\de_{k-j-1}^{\al_{k-j-1}}=&\ka_{k-j-1}\rho^{1+\ga}(\de_{k-j}^{\al_{k-j}})^\ga
=\ka_{k-j-1}\rho^{1+\ga}(\ka_{k-j}\ka_{k-j+1}^{1/\ga}\cdots \ka_k\rho^{(1+1/\ga)j+1})^\ga\\
=&\ka_{k-j-1}\ka_{k-j}^\ga \ka_{k-j+1}\cdots \ka_k^\ga\,\rho^{(1+\ga)(j+1)+\ga}
=c_{k-j-1}\,\rho^{(1+\ga)(j+1)+\ga},
\end{aligned}
\]
and the formula holds true in this case.
\par
Suppose the formula holds true for $j$ odd.
The, $k-j-1$ is odd and we have:
\[
\begin{aligned}
\de_{k-j-1}^{\al_{k-j-1}}=&\ka_{k-j-1}\rho^{1+1/\ga}(\de_{k-j}^{\al_{k-j}})^{1/\ga}
=\ka_{k-j-1}\rho^{1+1/\ga}(\ka_{k-j}\ka_{k-j+1}^\ga \ka_{k-j+2}\cdots \ka_k^\ga\,\rho^{(1+\ga)j+\ga})^{1/\ga}\\
=&\ka_{k-j-1}\ka_{k-j}^{1/\ga}\ka_{k-j+1}\ka_{k-j+2}^{1/\ga}\cdots \ka_k\,\rho^{(1+1/\ga)(j+1)+1}
=c_{k-j-1}\,\rho^{(1+1/\ga)(j+1)+1}.
\end{aligned}
\]
By induction, the formula is established for $k$ odd.
\par
Now, assume that $k$ is even.
For $j=0$ we have $\de_k^{\al_k}=\ka_k\rho=c_k\rho$, and the statement is verified.
For $j=1$ we have
\[
\de_{k-1}^{\al_{k-1}}=\ka_{k-1}\rho^{1+1/\ga}(\de_k^{\al_k})^{1/\ga}
=\ka_{k-1}\rho^{1+1/\ga}(\ka_k\rho)^{1/\ga}
=\ka_{k-1}\ka_k^{1/\ga}\,\rho^{(1+1/\ga)+1/\ga}
=c_{k-1}\,\rho^{(1+1/\ga)+1/\ga},
\]
and the statement is verified.
\par
Hence, suppose the statement holds true for all $i\le j$, $j$ even.
The, $j+1$ is odd, $k-j-1$ is odd.
We have:
\[
\begin{aligned}
\de_{k-j-1}^{\al_{k-j-1}}=&\ka_{k-j-1}\rho^{1+1/\ga}(\de_{k-j}^{\al_{k-j}})^{1/\ga}
=\ka_{k-j-1}\rho^{1+1/\ga}(\ka_{k-j}\ka_{k-j+1}^\ga \ka_{k-j+2}\cdots \ka_k\,\rho^{(1+\ga)j+1})^{1/\ga}\\
=&\ka_{k-j-1}\ka_{k-j}^{1/\ga}\ka_{k-j+1}\ka_{k-j+2}^{1/\ga}\cdots \ka_k^{1/\ga}\,\rho^{(1+\ga)(j+1)+1/\ga}
=c_{k-j-1}\,\rho^{(1+\ga)(j+1)+1/\ga},
\end{aligned}
\]
and the asserted formula follows.
\par
Finally, suppose the statement holds true for all $i\le j$, $j$ odd.
Then, $k-j-1$ is even.
We compute:
\[
\begin{aligned}
\de_{k-j-1}^{\al_{k-j-1}}=&\ka_{k-j-1}\,\rho^{1+\ga}(\de_{k-j}^{\al_{k-j}})^\ga
=\ka_{k-j-1}\,\rho^{1+\ga}(c_{k-j}\rho^{(1+1/\ga)j+1/\ga})^\ga\\
=&\ka_{k-j-1}\ka_{k-j}^\ga \ka_{k-j+1}\cdots \ka_k\,\rho^{(1+\ga)(j+1)+1}
=c_{k-j-1}\,\rho^{(1+\ga)(j+1)+1}.
\end{aligned}
\]
\par
Proof of the second decay rate in \eqref{eq:deltadecay1}.
Using \eqref{eq:deltadecay1}, if $k$ is odd and $j$ is even, we have
\[
\dej^{\aj}=c_j\,\rho^{(1+\ga)(k-j)+\ga}.
\]
Recalling the explicit value of $\aj$ and the definition od $d_j$, we derive
\[
\dej=d_j\rho^{\frac{(1+\ga)(k-j)+\ga}{2[(1+\ga)j-1]}}.
\]
If $k$ is odd and $j$ is odd, we have
\[
\dej^{\aj}=c_j\rho^{(1+\frac{1}{\ga})(k-j)+1}.
\]
Consequently,
\[
\dej=d_j\rho^{\frac{(1+\frac{1}{\ga})(k-j)+1}{2[(1+\frac{1}{\ga})j-\frac{1}{\ga}]}}
=\rho^{\frac{(1+\ga)(k-j)+\ga}{2[(1+\ga)j-1]}},
\]
and the statement follows for $k$ odd.
\par
If $k$ is even and $j$ is odd, we have
\[
\dej^{\aj}=c_j\rho^{(1+\frac{1}{\ga})(k-j)+\frac{1}{\ga}}
\]
and therefore
\[
\dej=d_j\rho^{\frac{(1+\frac{1}{\ga})(k-j)+\frac{1}{\ga}}{2[(1+\frac{1}{\ga})j-\frac{1}{\ga}]}}
=\rho^{\frac{(1+\ga)(k-j)+1}{2[(1+\ga)j-1]}}.
\]
Finally, if $k$ is even and $j$ is even, we have
\[
\dej^\aj=c_j\rho^{(1+\ga)(k-j)+1}.
\]
It follows that
\[
\dei=d_j\rho^{\frac{(1+\ga)(k-j)+1}{2[(1+\ga)j-1]}}.
\]
The proof of the third decay rate in \eqref{eq:deltadecay1}
is an elementary computation; for the reader's convenience we outline it in the Appendix.
\end{proof}
\section{The \lq\lq error function" $\Thj$ \\
(estimation of $\mathcal E_\pm^1$)}
\label{sec:Thetaj}
In this section we justify the choice of \eqref{def:adodd}--\eqref{def:adeven} for the parameters $\ai,\dei$.
\par
We recall that
the \textit{shrinking annuli} $\Aj$ are defined by
\begin{equation*}
\label{def:Aj}
\Aj=\{x\in\Omega:\ \sqrt{\de_{j-1}\de_j}\le|x|<\sqrt{\de_j\de_{j+1}}\},
\qquad j=1,2,\ldots,k,
\end{equation*}
where we set $\de_0=0$ and $\de_{k+1}=+\infty$.
With this definition, for every $i,j=1,2,\ldots,k$ we have
\begin{equation*}
\frac{\Aj}{\dei}=\left\{x\in\frac{\Omega}{\dei}:\ \sqrt{\frac{\de_{j-1}\de_j}{\dei}}\le|x|<\sqrt{\frac{\de_j\de_{j+1}}{\dei}}\right\},
\end{equation*}
and it is readily checked that:
\begin{equation*}
\Aj/\dei\quad
\left\{
\begin{aligned}
&\mbox{runs off to infinity,} &&\mbox{if\ } i<j\\
&\mbox{invades whole space,}&&\mbox{if\ }i=j\\
&\mbox{shrinks to the origin,}&&\mbox{if\ }i>j.
\end{aligned}
\right.
\end{equation*}
We define the \lq\lq error functions" $\Thj$ in $\Aj/\dej$ by setting
\begin{equation*}
\Thj(\frac{x}{\dej})=
\begin{cases}
\Wr(x)-(\aj-2)\ln|x|-\wj(x)+\ln\rho,
&\mbox{if\ }j\ \mbox{is odd}\\
-\ga\Wr(x)-(\aj-2)\ln|x|-\wj(x)+\ln(\rho\tau\ga),
&\mbox{if\ }j\ \mbox{is even}.
\end{cases}
\end{equation*}
Then, we may write
\begin{equation}
\label{eq:Thjmasses}
\begin{cases}
(\rho e^{\Wr(x)}-|x|^{\aj-2}e^{\wj})\chi_{\Aj}=|x|^{\aj-2}e^{\wj}(e^{\Thj(x/\dej)}-1)
&\mbox{if $j$ is odd;}\\
(\ga\rho\tau e^{-\ga\Wr(x)}-|x|^{\aj-2}e^{\wj})\chi_{\Aj}=|x|^{\aj-2}e^{\wj}(e^{\Thj(x/\dej)}-1)
&\mbox{if $j$ is even,}
\end{cases}
\end{equation}
and consequently
\begin{equation*}
\begin{aligned}
\Ep^1=&\sum_{\stackrel{1\le j\le k}{j\ \mathrm{odd}}}|x|^{\aj-2}e^{\wj}(e^{\Thj(x/\dej)}-1)\chi_{\Aj}\\
\Em^1=&\sum_{\stackrel{1\le j\le k}{j\ \mathrm{even}}}|x|^{\aj-2}e^{\wj}(e^{\Thj(x/\dej)}-1)\chi_{\Aj}
\end{aligned}
\end{equation*}
The key point is that $\Thj$ is well estimated in the expanding annulus~$\Aj/\dej$. 
\begin{prop}
\label{prop:Thjmain}
For every fixed $j=1,2,\ldots,k$,
the error term $\Thj$ satisfies:
\begin{equation}
\label{eq:Thjmain}
|\Thj(y)|=O(\dej|y|)+O(\rho^{\bgk})\qquad\mbox{for all }y\in\Aj/\dej,
\end{equation}
with
\begin{equation}
\label{def:bgk}
\bgk=\min\{r_i,q_i,\ i=1,2,\ldots,k\}>0,
\end{equation}
where $r_i,q_i$ are defined in \eqref{def:rj}--\eqref{def:qkgi}.
In particular,
\begin{equation}
\label{eq:Thjcorollary}
\sup_{y\in\Aj/\dej}|\Thj(y)|=O(1).
\end{equation}
\end{prop}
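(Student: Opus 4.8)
The plan is to expand $\Thj$ explicitly in the annulus $\Aj/\dej$ by separating the contribution of the $j$-th bubble from all the others, and to show that the defining conditions \eqref{def:adodd}--\eqref{def:adeven} are \emph{precisely} what is needed to kill the constant and logarithmic terms, leaving only error terms that vanish as powers of $\rho$. First I would recall the standard projection estimate $P\wdai(x)=\wi(x)+4\pi\ai H(x,0)+o(1)$ (more precisely, $P\wdai(x)=\wi(x)+\ln(2\ai^2\dei^{\ai})\cdot\frac{1}{2\pi}\cdot\frac{?}{}$; rather, using $-\Delta P\wdai=|x|^{\ai-2}e^{\wi}$ with zero boundary data and comparing with the Green's representation, one gets $P\wdai(x)=\wi(x)+4\pi\ai h(0)-2\ai\ln|x|+O(\dei^{\ai}|x|^{-\ai})+O(|x|)$ uniformly away from the singularity). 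Substituting this into $\Wr=\sum_i(-1)^{i-1}\ga^{-\si}P\wdai$ and collecting terms, the expression $\Wr(x)-(\aj-2)\ln|x|-\wj(x)+\ln\rho$ (for $j$ odd) becomes, after rearrangement, a sum of: (i) the ``self" term $P\wdaj-\wj$, which contributes the $4\pi\aj h(0)$ constant and an $-2\aj\ln|x|=-(\aj-?)\ln|x|$ piece that must match $(\aj-2)\ln|x|$; (ii) the contributions of bubbles with $i<j$ (faster concentrating), which in $\Aj$ are essentially the constant $\wi(x)\approx \ln(2\ai^2\dei^{\ai})-2\ai\ln|x|$ plus $O((\dei/|x|)^{\ai})=O(\dei/\de_{j-1})^{?}$ small terms; (iii) the contributions of bubbles with $i>j$ (slower concentrating), which in $\Aj$ are essentially $\ln(2\ai^2)$-type constants plus $O((|x|/\dei)^{?})$ small terms.

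The key algebraic step is then to verify that the sum of all the \emph{constant} parts and all the $\ln|x|$-\emph{coefficient} parts vanish exactly by virtue of \eqref{def:adodd} (resp. \eqref{def:adeven}): the $\ln|x|$ coefficients cancel because $\aj=2(1+\sum_{i<j}(-1)^i\ga^{-\si}\ai)$, which is exactly the statement that the net coefficient of $\ln|x|$ coming from all previous bubbles, the self-interaction, and the prescribed $(\aj-2)$, is zero; and the additive constants cancel because $\ln(2\aj^2\dej^{\aj})$ has been defined to absorb $2\sum_{i>j}(-1)^i\ga^{-\si}\ln\dei^{\ai}$, the Robin-function term $-4\pi h(0)\Akcal$, and $\ln\rho$. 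So after these cancellations one is left only with the genuinely small remainders: terms of size $O(\dei/|x|)^{\ai}$ from inner bubbles ($i<j$) evaluated on $|x|\ge\sqrt{\de_{j-1}\de_j}$, terms of size $O(|x|/\dei)$-type from outer bubbles ($i>j$) evaluated on $|x|<\sqrt{\de_j\de_{j+1}}$, and the $O(|x|)=O(\dej|y|)$ boundary-regular-part remainder of the projections. Using Proposition~\ref{prop:deimain} — specifically the power decay rates $\dej^{\aj}=c_j\rho^{r_j}$ and $\dej/\de_{j+1}=(d_j/d_{j+1})\rho^{\qj}$ — each such ratio $\dei/\de_{i\pm1}$ raised to the relevant power is $O(\rho^{r_\ell})$ or $O(\rho^{q_\ell})$ for some $\ell$, hence $O(\rho^{\bgk})$ with $\bgk=\min\{r_i,q_i\}>0$ as in \eqref{def:bgk}. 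This yields \eqref{eq:Thjmain}; then \eqref{eq:Thjcorollary} is immediate since on $\Aj/\dej$ one has $\dej|y|=|x|<\sqrt{\de_j\de_{j+1}}$, so $\dej|y|=\sqrt{\dej\de_{j+1}}\cdot\sqrt{\dej/\de_{j+1}}\cdot(\dej/\dej)$... more simply $\dej|y|<\sqrt{\dej\de_{j+1}}=\sqrt{\de_{j+1}/\dej}\cdot\dej\le C\dej^{1/2}\de_{j+1}^{1/2}\to0$, so the $O(\dej|y|)$ term is bounded (in fact $o(1)$).

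I would carry out the computation in the order: (1) projection expansion lemma for a single $P\wdai$, split into the regions ``near $i$", ``inside $i$" ($|x|\ll\dei$), ``outside $i$" ($|x|\gg\dei$); (2) substitute into $\Thj$ and collect constants and $\ln|x|$-coefficients; (3) invoke \eqref{def:adodd}/\eqref{def:adeven} to see these vanish; (4) bound the remainders using the decay rates of Proposition~\ref{prop:deimain}; (5) do the even-$j$ case in parallel (it differs only in the $-\ga$ factors and the roles of odd/even bubbles, already encoded in the $\si$ exponents and in \eqref{def:adeven}). The main obstacle I anticipate is \textbf{step (4)}: bookkeeping which exponent $r_\ell$ or $q_\ell$ governs each cross-term and checking that none of them degenerates — in particular that for $i$ ranging over all indices $\ne j$, the quantity $(\dei/\de_{\max(i,j)})^{\text{power}}$ or the analogous ratio is controlled by one of the finitely many positive exponents $\{r_i,q_i\}$. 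This is where the asymmetry $\ga\neq1$ makes things delicate, since (by Remark~\ref{rmk:ainotincreasing}) the $\ai$'s are not monotone, so one cannot argue by a naive ``each successive bubble is much smaller" heuristic and must instead use the recursive relation \eqref{eq:deltarecursive} together with the explicit rates \eqref{def:rj}--\eqref{def:qkgi}. The estimate $O(\dej|y|)$ for the regular-part remainder is comparatively routine, coming from the Lipschitz regularity of $h$ near $0$ and the fact that the harmonic-conjugate corrections in $P\wdai-\wi-4\pi\ai h$ are $O(|x|)$ on compacts.
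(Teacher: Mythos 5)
Your proposal follows essentially the same route as the paper's proof: the projection expansion of $P\wdai$, the exact cancellation of the $\ln|x|$-coefficients and of the additive constants forced by the defining relations \eqref{def:adodd}--\eqref{def:adeven}, and the control of the cross-terms $\frac{1}{|y|^{\ai}}(\dei/\dej)^{\ai}$ (for $i<j$) and $|y|^{\ai}(\dej/\dei)^{\ai}$ (for $i>j$) by powers $\rho^{q_{j-1}}$, $\rho^{q_j}$. The bookkeeping you flag in step (4) is handled exactly as you anticipate (Lemma~\ref{lem:Aj} combined with Proposition~\ref{prop:deimain}): since the $\dei$ remain monotone even though the $\ai$ are not, the worst ratios are $\de_{j-1}/\dej$ and $\dej/\de_{j+1}$, and $\ai\ge2$ suffices to close the estimate.
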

Proposition~\ref{prop:Thjmain} readily implies the following $L^p$-estimates.
\begin{cor}
\label{cor:Thj}
We have, for any $p\ge1$:
\begin{equation*}
\begin{aligned}
&\|\rho e^{\Wr(x)}-|x|^{\aj-2}e^{\wj}\|_{L^p(\Aj)}
=O(\rho^{\bgk-2s_j\frac{p-1}{p}})
&&\mbox{if $j$ is odd}\\
&\|\rho\tau\ga e^{-\ga\Wr(x)}-|x|^{\aj-2}e^{\wj}\|_{L^p(\Aj)}
=O(\rho^{\bgk-2s_j\frac{p-1}{p}}),
&&\mbox{if $j$ is even},
\end{aligned}
\end{equation*}
where $s_j>0$ is defined in \eqref{def:sj} and $\bgk-2s_j\frac{p-1}{p}>0$ for $0\le p-1\ll1$.
\par
In particular, we have
\begin{equation*}
\|\Ep^1\|_{L^p(\Omega)}+\|\Em^1\|_{L^p(\Omega)}=O(\rho^{\bgk-2s_1\frac{p-1}{p}}).
\end{equation*}
\end{cor}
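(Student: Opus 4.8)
The plan is to read off the stated $L^{p}$–bounds directly from the representation \eqref{eq:Thjmasses}, after rescaling each bubble to unit scale, using Proposition~\ref{prop:Thjmain} to control the exponential factor.

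\emph{Reduction and rescaling.} I would first fix $j$ odd (the even case being identical, with $\rho\tau\ga e^{-\ga\Wr}$ in place of $\rho e^{\Wr}$) and use \eqref{eq:Thjmasses} to write $(\rho e^{\Wr}-|x|^{\aj-2}e^{\wj})\chi_{\Aj}=|x|^{\aj-2}e^{\wj}\,(e^{\Thj(x/\dej)}-1)\,\chi_{\Aj}$. Substituting $x=\dej y$ and using the explicit form of $\wda$ gives $|x|^{\aj-2}e^{\wj(x)}=\dej^{-2}|y|^{\aj-2}e^{w_{1}^{\aj}(y)}$, where $w_{1}^{\aj}(y):=\ln\big(2\aj^{2}/(1+|y|^{\aj})^{2}\big)$, and $dx=\dej^{2}\,dy$, whence
\[
\big\|(\rho e^{\Wr}-|x|^{\aj-2}e^{\wj})\chi_{\Aj}\big\|_{L^{p}(\Aj)}=\dej^{\frac{2-2p}{p}}\Big(\int_{\Aj/\dej}|y|^{(\aj-2)p}e^{pw_{1}^{\aj}(y)}\,\big|e^{\Thj(y)}-1\big|^{p}\,dy\Big)^{1/p}.
\]

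\emph{The remainder part.} Next I would use \eqref{eq:Thjcorollary}, which gives $e^{|\Thj|}=O(1)$ on $\Aj/\dej$, so that $\big|e^{\Thj(y)}-1\big|\le C|\Thj(y)|$, together with the pointwise bound $|\Thj(y)|\le C\big(\dej|y|+\rho^{\bgk}\big)$ of \eqref{eq:Thjmain}. The $\rho^{\bgk}$–piece is then controlled by $\rho^{p\bgk}\int_{\mathbb R^{2}}|y|^{(\aj-2)p}e^{pw_{1}^{\aj}(y)}\,dy$, a finite integral for every $p\ge1$ (locally integrable at the origin since $(\aj-2)p\ge0$, and decaying like $|y|^{-(\aj+2)p}$ with $(\aj+2)p\ge4>2$ at infinity); combined with the prefactor $\dej^{(2-2p)/p}$ and $\dej=d_{j}\rho^{s_{j}}$, this produces exactly a term of order $\rho^{\,\bgk-2s_{j}\frac{p-1}{p}}$.

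\emph{The $\dej|y|$–part.} Here the bound \eqref{eq:Thjmain} will not suffice in isolation, and I would invoke the finer structure of $\Thj$ established in Section~\ref{sec:Thetaj}: up to terms already absorbed into $O(\rho^{\bgk})$ (the mutual bubble interactions, which are controlled by the exponents $r_{i},q_{i}$), $\Thj$ is a fixed multiple of the harmonic function $x\mapsto H(x,0)-H(0,0)$. Since \eqref{assumpt:Omega} makes $0$ a critical point of Robin's function — and, when $\ga=m/n$, imposes the additional rotational symmetry that makes $H(\cdot,0)-H(0,0)$ vanish to correspondingly higher order at $0$ — this part is genuinely small on $\Aj$. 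For the shrinking annuli $\Aj$ with $j<k$ it suffices to combine $|x|=\dej|y|\le\sqrt{\dej\de_{j+1}}$ on $\Aj$ with the decay rates of Proposition~\ref{prop:deimain}: since $s_{j}+s_{j+1}=2s_{j+1}+q_{j}>q_{j}\ge\bgk$, this contribution is of order $\rho^{\,s_{j}+s_{j+1}-2s_{j}\frac{p-1}{p}}=o\big(\rho^{\,\bgk-2s_{j}\frac{p-1}{p}}\big)$. For the outermost annulus $j=k$, where $|x|$ is no longer small, I would instead use the angular orthogonality of the homogeneous harmonic Taylor polynomials of $H(\cdot,0)-H(0,0)$ against the radially symmetric weight $|y|^{(\aj-2)p}e^{pw_{1}^{\aj}(y)}$, together with the decay $e^{w_{1}^{\aj}(y)}\sim|y|^{-2\aj}$ at infinity; carrying through this last estimate is, I expect, the main obstacle. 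Adding the two parts gives the first two $L^{p}$–estimates, and positivity of $\bgk-2s_{j}\frac{p-1}{p}$ for $p$ close to $1$ is immediate since $\bgk>0$.

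\emph{Globalisation.} Finally, since the annuli $\Aj$, $j=1,\dots,k$, are pairwise disjoint, $\|\Ep^{1}\|_{L^{p}(\Omega)}^{p}$ is the sum over odd $j$ of $\big\|(\rho e^{\Wr}-|x|^{\aj-2}e^{\wj})\chi_{\Aj}\big\|_{L^{p}(\Aj)}^{p}$, and likewise $\|\Em^{1}\|_{L^{p}(\Omega)}^{p}$ is the sum over even $j$; each summand being $O\big(\rho^{\,p(\bgk-2s_{j}\frac{p-1}{p})}\big)$ and $s_{1}>s_{2}>\dots>s_{k}>0$, the smallest exponent is the one with $j=1$, so that $\|\Ep^{1}\|_{L^{p}(\Omega)}+\|\Em^{1}\|_{L^{p}(\Omega)}=O\big(\rho^{\,\bgk-2s_{1}\frac{p-1}{p}}\big)$.
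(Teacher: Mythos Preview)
Your overall scheme---rescale by $\dej$, use \eqref{eq:Thjcorollary} to replace $|e^{\Thj}-1|$ by $C|\Thj|$, then feed in \eqref{eq:Thjmain}---is exactly what the paper does, and your treatment of the $O(\rho^{\bgk})$ piece matches the paper's Claim~2 verbatim. The problem is your treatment of the $O(\dej|y|)$ piece.

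You assert that ``the bound \eqref{eq:Thjmain} will not suffice in isolation'' and then propose to invoke the harmonic structure of $H(\cdot,0)-H(0,0)$, the symmetry \eqref{assumpt:Omega}, a case split $j<k$ versus $j=k$, and for $j=k$ an angular--orthogonality argument that you yourself flag as ``the main obstacle''. This is a genuine misstep: the pointwise bound \eqref{eq:Thjmain} \emph{does} suffice, with no further structure. After rescaling, the $\dej|y|$ contribution to the $p$-th power of the norm is
\[
\dej^{2(1-p)}\,\dej^{p}\int_{\Aj/\dej}\frac{|y|^{(\aj-2)p}\,|y|^{p}}{(1+|y|^{\aj})^{2p}}\,dy,
\]
and the point you are missing is that this integral is bounded uniformly over all of $\mathbb R^{2}$: at infinity the integrand decays like $|y|^{-(\aj+1)p}$, and since $\aj\ge2$ and $p\ge1$ one has $(\aj+1)p\ge3>2$. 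The paper simply records this and reads off a contribution of order $\dej^{2-p}$, a positive power of $\rho$ for every $p\ge1$. No case split, no symmetry of $h$, no orthogonality is needed.

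Your proposed salvage for $j=k$ is also dubious on its own terms: angular orthogonality of harmonic polynomials against a radial weight is an $L^{2}$ cancellation and does not directly control an $L^{p}$ integrand for $p\neq2$. Since the direct route already closes, you should drop the detour entirely and just observe the integrability above.
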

We devote the remaining part of this section to the proof of 
Proposition~\ref{prop:Thjmain} and of Corollary~\ref{cor:Thj}.
We note that we may write:
\begin{equation}
\label{def:Thetaj}
\Thj(\frac{x}{\dej})=
\begin{cases}
P\wj(x)-\wj(x)-(\aj-2)\ln|x|+\ln\rho
+\sum_{i\neq j}\frac{(-1)^{i-1}}{\ga^{\sigma(i)}}P\wi(x),
&\mbox{if\ }j\ \mbox{is odd}\\
P\wj(x)-\wj(x)-(\aj-2)\ln|x|+\ln(\rho\tau\ga)
-\ga\sum_{i\neq j}\frac{(-1)^{i-1}}{\ga^{\sigma(i)}}P\wi(x),
&\mbox{if\ }j\ \mbox{is even}.
\end{cases}
\end{equation}
We recall the expansion of $P\wda$.
\begin{lemma}
\label{lem:proj}
For every $\al\ge2$, $\de>0$ there holds:
\begin{equation}
\label{eq:proj}
\begin{split}
P\wda(x)=&\wda(x)-\ln(2\al^2\de^\al)+4\pi\al H(x,0)+O(\de^\al)\\
=&-2\ln(\de^\al+|x|^\al)+4\pi\al H(x,0)+O(\de^\al).
\end{split}
\end{equation}
\end{lemma}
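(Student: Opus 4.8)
The plan is to use that $P\wda-\wda$ is harmonic in $\Omega$ and to identify its boundary values explicitly. Since $\wda$ solves the Liouville equation $-\Delta\wda=|x|^{\al-2}e^{\wda}$ on all of $\rr^2$ by \eqref{eq:w}, while by definition of the projection $P\wda$ solves $-\Delta P\wda=|x|^{\al-2}e^{\wda}$ in $\Omega$ with $P\wda=0$ on $\partial\Omega$, the difference $P\wda-\wda$ is harmonic in $\Omega$ and equals $-\wda$ on $\partial\Omega$. On $\partial\Omega$ one has $|x|\ge\operatorname{dist}(0,\partial\Omega)>0$ (recall $0\in\Omega$), so starting from $\wda(x)=\ln(2\al^2\de^\al)-2\ln(\de^\al+|x|^\al)$ and using the elementary expansion $\ln(\de^\al+|x|^\al)=\al\ln|x|+\ln(1+\de^\al|x|^{-\al})=\al\ln|x|+O(\de^\al)$, uniform on the compact set $\partial\Omega$, we get $\wda(x)=\ln(2\al^2\de^\al)-2\al\ln|x|+O(\de^\al)$ for $x\in\partial\Omega$.

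Next I would bring in the harmonic function $4\pi\al H(\cdot,0)$. By \eqref{def:regG}, $H(\cdot,0)$ is harmonic in $\Omega$, and since $G(\cdot,0)=0$ on $\partial\Omega$ we have $H(x,0)=\tfrac{1}{2\pi}\ln|x|$ there, hence $4\pi\al H(x,0)=2\al\ln|x|$ on $\partial\Omega$. Therefore
\[
\psi:=P\wda-\wda+\ln(2\al^2\de^\al)-4\pi\al H(\cdot,0)
\]
is harmonic in $\Omega$, continuous on $\overline\Omega$ by elliptic regularity ($\partial\Omega$ smooth and $|x|^{\al-2}e^{\wda}$ bounded on $\Omega$ since $\al\ge2$), and by the two computations above satisfies $\psi=O(\de^\al)$ on $\partial\Omega$. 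The maximum principle then yields $\|\psi\|_{L^\infty(\Omega)}\le\|\psi\|_{L^\infty(\partial\Omega)}=O(\de^\al)$, which is exactly the first identity in \eqref{eq:proj}. The second identity follows immediately upon writing $\wda(x)-\ln(2\al^2\de^\al)=-2\ln(\de^\al+|x|^\al)$.

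I do not expect any serious obstacle here: the argument is a routine comparison via the maximum principle. The only points deserving attention are the uniformity of the $O(\de^\al)$ term on $\partial\Omega$ — guaranteed because $\partial\Omega$ is compact and stays at a fixed positive distance from the origin, so the implied constant depends only on $\al$ and $\Omega$ — and the applicability of the classical maximum principle, which needs $P\wda\in C(\overline\Omega)$ and is standard given the smoothness of $\partial\Omega$ and the boundedness of the right-hand side.
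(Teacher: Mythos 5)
Your argument is correct and is precisely the maximum-principle comparison that the paper invokes (its proof of Lemma~\ref{lem:proj} simply states that the expansion is a direct consequence of the maximum principle, citing \cite{GrossiPistoia}); you have merely written out the details. The key points — harmonicity of $P\wda-\wda+\ln(2\al^2\de^\al)-4\pi\al H(\cdot,0)$, the boundary identification $H(x,0)=\frac{1}{2\pi}\ln|x|$ on $\partial\Omega$, and the uniform $O(\de^\al)$ expansion of $\wda$ on $\partial\Omega$ — are all handled correctly.
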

\begin{proof}
The proof is a direct consequence of the maximum principle,
see, e.g., \cite{GrossiPistoia}.
\end{proof}
The following estimates are a key ingredient.
\begin{lemma}
\label{lem:Aj}
Let $y\in\Aj/\dej$, $j=1,2,\ldots,k$. There holds:
\begin{equation*}
\begin{aligned}
&\frac{1}{|y|^{\ai}}\left(\frac{\dei}{\dej}\right)^{\ai}
=O(\rho^{q_{j-1}}),
&&\mbox{if $i<j$}\\
&|y|^{\ai}\left(\frac{\dej}{\dei}\right)^{\ai}
=O(\rho^{q_j}),
&&\mbox{if $i>j$},
\end{aligned}
\end{equation*} 
where $q_{j-1},q_j>0$ are the constants defined in \eqref{def:qkgi}.
\end{lemma}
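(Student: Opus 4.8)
\textbf{Proof plan for Lemma~\ref{lem:Aj}.}
The plan is to translate the two claimed estimates into statements about ratios of the form $\dei/\dej$ (respectively $\dej/\dei$) and then invoke the power decay rates from Proposition~\ref{prop:deimain}, together with the geometric description of the annulus $\Aj/\dej$. First I would record the size of $|y|$ for $y\in\Aj/\dej$: by definition of $\Aj$ one has $\sqrt{\de_{j-1}\de_j}\le|x|<\sqrt{\de_j\de_{j+1}}$, hence dividing by $\dej$,
\begin{equation*}
\sqrt{\frac{\de_{j-1}}{\de_j}}\le|y|<\sqrt{\frac{\de_{j+1}}{\de_j}}.
\end{equation*}
Since $s_j>s_{j+1}$ for all $j$, we have $\de_{j-1}/\de_j=O(\rho^{q_{j-1}})\to0$ and $\de_j/\de_{j+1}=O(\rho^{q_j})\to0$, so $|y|$ lies between a small power of $\rho$ and a large negative power of $\rho$; in particular the lower bound $|y|\ge\sqrt{\de_{j-1}/\de_j}$ and upper bound $|y|\le\sqrt{\de_{j+1}/\de_j}$ are the only facts about $y$ that will be used.

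For the case $i<j$, I would write
\begin{equation*}
\frac{1}{|y|^{\ai}}\left(\frac{\dei}{\dej}\right)^{\ai}
\le\left(\frac{\de_j}{\de_{j-1}}\right)^{\ai/2}\left(\frac{\dei}{\dej}\right)^{\ai}
=\left(\frac{\dei^{2}}{\de_{j-1}\de_j}\right)^{\ai/2}\left(\frac{\dej}{\dei}\right)^{\ai/2}\cdots
\end{equation*}
— more cleanly, using $|y|\ge\sqrt{\de_{j-1}/\de_j}$ we get $|y|^{-\ai}(\dei/\dej)^{\ai}\le(\dei/\de_{j-1})^{\ai/2}(\dei/\de_j)^{\ai/2}$, and since $i\le j-1<j$ and the $\de$'s are strictly decreasing in the index, each of the two factors is $\le1$ after one notes $\dei\le\de_{j-1}$. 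That only gives $O(1)$; to extract the sharp power $\rho^{q_{j-1}}$ one must instead chain the ratios: $\dei/\de_{j-1}=(\dei/\de_{i+1})(\de_{i+1}/\de_{i+2})\cdots(\de_{j-2}/\de_{j-1})$, each factor being $O(\rho^{q_\ell})$ by the third decay rate in \eqref{eq:deltadecay1}, and the dominant (smallest) exponent among $q_i,\dots,q_{j-1}$ is $q_{j-1}$ because $\qj$ is decreasing in $j$ (visible from \eqref{def:qkgi}, where the denominator $[(1+\ga)j+\ga][(1+\ga)j-1]$ grows in $j$). Combining, $|y|^{-\ai}(\dei/\dej)^{\ai}=O(\rho^{(\ai/2)\cdot(\text{sum of }q_\ell)}\cdot\rho^{-(\text{correction})})$; after simplification the net exponent is exactly $q_{j-1}$, which is positive. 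The case $i>j$ is symmetric: from $|y|\le\sqrt{\de_{j+1}/\de_j}$ one gets $|y|^{\ai}(\dej/\dei)^{\ai}\le(\de_{j+1}/\dei)^{\ai/2}(\de_j/\dei)^{\ai/2}$, and chaining $\de_j/\dei$ and $\de_{j+1}/\dei$ through the consecutive ratios $\de_\ell/\de_{\ell+1}=O(\rho^{q_\ell})$ for $j\le\ell\le i-1$, the governing exponent is $q_j$, again positive.

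The main obstacle I anticipate is \emph{bookkeeping the exponent arithmetic} so that the powers of $\rho$ collect precisely to $q_{j-1}$ and $q_j$ rather than to some larger multiple: the factor $\ai$ in the exponent interacts with the $\ga$-dependent ratios $\de_\ell/\de_{\ell+1}$ (whose exponents alternate in structure between odd and even indices, cf.\ the two cases in \eqref{eq:deltarecursive}), and one must verify that the telescoping of $\dei^{\ai}$ against $\de_{j-1}\de_j$ (resp.\ $\de_j\de_{j+1}$) is exact — this is precisely the algebraic identity that the defining conditions \eqref{def:adodd}--\eqref{def:adeven} were engineered to produce. Concretely I would reduce everything to the quantities $\dei^{\ai}=c_i\rho^{r_i}$ from \eqref{eq:deltadecay} and check the linear relation among the $r_i$'s and the $\ai$'s directly; the positivity of the resulting exponent then follows from $q_{j-1},q_j>0$, which is part of the statement of Proposition~\ref{prop:deimain}. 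Once the sharp exponent is identified the rest is immediate, since on $\Aj/\dej$ the variable $|y|$ contributes no worse than the stated endpoint bounds.
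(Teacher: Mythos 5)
Your opening display for the case $i<j$,
\[
\frac{1}{|y|^{\ai}}\Big(\frac{\dei}{\dej}\Big)^{\ai}
\le\Big(\frac{\dei}{\de_{j-1}}\Big)^{\ai/2}\Big(\frac{\dei}{\dej}\Big)^{\ai/2}
=\Big(\frac{\dei^{2}}{\de_{j-1}\dej}\Big)^{\ai/2},
\]
is exactly the paper's first step, but you then misjudge what it yields. Using $\dei\le\de_{j-1}$ (valid since $i\le j-1$ and the $\de$'s are \emph{increasing} in the index, $\dei=o(\de_{i+1})$ --- not decreasing, as you wrote) does \emph{not} merely give $O(1)$: it gives
\[
\Big(\frac{\dei^{2}}{\de_{j-1}\dej}\Big)^{\ai/2}\le\Big(\frac{\de_{j-1}}{\dej}\Big)^{\ai/2}
=\Big(\frac{d_{j-1}}{d_j}\,\rho^{q_{j-1}}\Big)^{\ai/2}=O(\rho^{q_{j-1}}),
\]
by the third decay rate in \eqref{eq:deltadecay1} together with $\ai\ge2$. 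That is the entire proof; the case $i>j$ is the mirror image, using $|y|\le\sqrt{\de_{j+1}/\dej}$ and $\dei\ge\de_{j+1}$ to reach $(\dej/\de_{j+1})^{\ai/2}=O(\rho^{q_j})$.

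The alternative route you then propose rests on two misconceptions. First, the lemma asserts only an upper bound, not a sharp asymptotic, so there is no ``exact telescoping'' to verify, and the defining conditions \eqref{def:adodd}--\eqref{def:adeven} play no role here (they are engineered for the cancellations in the error function $\Thj$, not for this estimate); the only inputs are $\dei=o(\de_{i+1})$, the rate $\de_{j-1}/\dej\sim\rho^{q_{j-1}}$, and $\ai\ge2$. Second, in the chaining $\dei/\de_{j-1}=\prod_\ell(\de_\ell/\de_{\ell+1})$ the exponents of $\rho$ \emph{add}, so the ``dominant (smallest) exponent'' plays no role, and your claim that ``the net exponent is exactly $q_{j-1}$'' is false in general --- it strictly exceeds $q_{j-1}$ unless $i=j-1$ and $\ai=2$. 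The unexplained negative ``correction'' power of $\rho$ in your combined estimate indicates the arithmetic was not actually carried out; had it been, you would still land on an exponent $\ge q_{j-1}$ and the conclusion would follow a fortiori, but the one-line bound above is both what the paper does and what you should do.
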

\begin{proof}
We have, by definition of $\Aj$, $\sqrt{\de_{j-1}/\dej}\le|y|<\sqrt{\de_{j+1}/\dej}$,
where $\de_0:=0$ and $\de_{k+1}:=+\infty$. 
For $j\ge2$ and for $i<j$ we estimate, using \eqref{eq:deltadecay1}:
\begin{align*}
\frac{1}{|y|^{\ai}}\left(\frac{\dei}{\dej}\right)^{\ai}
\le&\left(\frac{\dej}{\de_{j-1}}\right)^{\ai/2}\left(\frac{\dei}{\dej}\right)^{\ai}
=\left(\frac{\dei^2}{\de_{j-1}\dej}\right)^{\ai/2}\stackrel{i\le j-1}{\le}\left(\frac{\de_{j-1}}{\dej}\right)^{\ai/2}\\
=&\left(\frac{d_{j-1}}{d_j}\rho^{q_{j-1}}\right)^{\ai/2}\stackrel{\ai\ge2}{=}O(\rho^{q_{j-1}}).
\end{align*}
Similarly, for $j\le k-1$ and for $i>j$ we estimate:
\begin{align}
\label{est:igej}
|y|^{\ai}\left(\frac{\dej}{\dei}\right)^{\ai}\le&\left(\frac{\de_{j+1}}{\dej}\right)^{\ai/2}\left(\frac{\dej}{\dei}\right)^{\ai}
=\left(\frac{\de_{j+1}\dej}{\dei^2}\right)^{\ai/2}\stackrel{i\ge j+1}{\le}\left(\frac{\dej}{\de_{j+1}}\right)^{\ai/2}\\
\nonumber
=&\left(\frac{d_{j}}{d_{j+1}}\rho^{q_j}\right)^{\ai/2}\stackrel{\ai\ge2}{=}O(\rho^{q_j}).
\end{align}
\end{proof}
The following expansion, whose proof is elementary, is useful in view of Lemma~\ref{lem:Aj}.
\begin{lemma}
\label{lem:lowerorder}
Let $y\in\Aj/\dej$. Then,
\begin{equation*}
\ln(\dei^{\ai}+|\dej y|^{\ai})=
\begin{cases}
\ai\ln(\dej|y|)+O(\frac{1}{|y|^{\ai}}(\frac{\dei}{\dej})^{\ai})&i<j\\
\ai\ln\dei+\ln(1+|y|^{\ai})&i=j\\
\ai\ln\dei+O(|y|^{\ai}(\frac{\dej}{\dei})^{\ai})&i>j.
\end{cases}
\end{equation*}
\end{lemma}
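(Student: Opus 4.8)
The plan is to prove the expansion of $\ln(\dei^{\ai}+|\dej y|^{\ai})$ by isolating, in each of the three regimes $i<j$, $i=j$, $i>j$, the dominant term among $\dei^{\ai}$ and $|\dej y|^{\ai}=\dej^{\ai}|y|^{\ai}$, and then controlling the correction via the elementary inequality $\ln(1+t)=O(t)$ for $t\ge0$ bounded (indeed $\ln(1+t)\le t$ for all $t\ge0$). The key quantitative input is Lemma~\ref{lem:Aj}, which tells us exactly that the relevant ratios are $O(\rho^{q_{j-1}})$ or $O(\rho^{q_j})$, hence $o(1)$, on $\Aj/\dej$; this is what makes one of the two terms genuinely dominant in each case.

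Concretely, I would proceed case by case. \emph{Case $i=j$.} Simply factor out $\dei^{\ai}=\dej^{\aj}$ to get $\ln(\dej^{\aj}+\dej^{\aj}|y|^{\aj})=\aj\ln\dej+\ln(1+|y|^{\aj})$; this is an exact identity, no estimate needed, and matches the claimed formula with $\ai\ln\dei+\ln(1+|y|^{\ai})$. \emph{Case $i<j$.} Here the bubble $w_i$ concentrates faster, so on $\Aj$ one expects $|x|=\dej|y|$ to dominate $\dei$. Factor out $|\dej y|^{\ai}=\dej^{\ai}|y|^{\ai}$:
\[
\ln(\dei^{\ai}+\dej^{\ai}|y|^{\ai})=\ai\ln(\dej|y|)+\ln\!\Big(1+\frac{1}{|y|^{\ai}}\Big(\frac{\dei}{\dej}\Big)^{\ai}\Big),
\]
and by Lemma~\ref{lem:Aj} the argument of the last logarithm is $1+O(\rho^{q_{j-1}})$, so $\ln(1+\cdot)=O\big(\frac{1}{|y|^{\ai}}(\dei/\dej)^{\ai}\big)$, giving precisely the stated bound. \emph{Case $i>j$.} Now $w_i$ concentrates more slowly, so $\dei$ dominates. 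Factor out $\dei^{\ai}$:
\[
\ln(\dei^{\ai}+\dej^{\ai}|y|^{\ai})=\ai\ln\dei+\ln\!\Big(1+|y|^{\ai}\Big(\frac{\dej}{\dei}\Big)^{\ai}\Big),
\]
and again Lemma~\ref{lem:Aj} shows the argument is $1+O(\rho^{q_j})$, so the correction is $O\big(|y|^{\ai}(\dej/\dei)^{\ai}\big)$, as claimed.

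There is no real obstacle here: the lemma is labelled ``elementary'' and the proof is essentially the algebraic identity of factoring out the dominant monomial plus the bound $\ln(1+t)\le t$. The only point requiring a word of care is making sure that in Cases $i<j$ and $i>j$ the quantity inside the final logarithm is actually small (so that $\ln(1+t)=O(t)$ rather than merely $\le t$ being the useful statement) — but this is exactly the content of Lemma~\ref{lem:Aj}, whose bounds $O(\rho^{q_{j-1}})$, $O(\rho^{q_j})$ with $q_{j-1},q_j>0$ are $o(1)$ as $\rho\to0^+$. For the boundary cases $j=1$ (where $\de_0=0$, so the sum $\dei^{\ai}+|\dej y|^{\ai}$ over $i<j$ is empty and nothing needs checking) and $j=k$ (where $\de_{k+1}=+\infty$, likewise vacuous for $i>j$), the formula is trivially consistent. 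I would therefore present the proof as the three short factorizations above, citing Lemma~\ref{lem:Aj} for the smallness of the correction terms.
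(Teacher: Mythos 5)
Your proof is correct and is exactly the argument the paper intends: the paper simply declares the lemma ``elementary'' without writing it out, and the intended content is precisely your factorization of the dominant term in each regime together with $|\ln(1+t)|\le t$ for $t\ge0$, with Lemma~\ref{lem:Aj} guaranteeing the correction terms are $o(1)$ on $\Aj/\dej$. Nothing is missing.
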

Using these facts, together with the definition of $\ai,\dei$, we show the following essential estimate.
\begin{lemma}
\label{lem:Thetaj}
For all $y\in\Aj/\dej$ it holds that
\begin{equation*}
\Thj(y)=O(\dej|y|)+\sum_{i=1}^kO(\dei^{\ai})+\sum_{i<j}O(\frac{1}{|y|^{\ai}}(\frac{\dei}{\dej})^{\ai})
+\sum_{i>j}O(|y|^{\ai}(\frac{\dej}{\dei})^{\ai}).
\end{equation*}
\end{lemma}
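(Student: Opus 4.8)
The plan is to expand each of the terms appearing in $\Thj$ directly from the definition~\eqref{def:Thetaj}, using the projection formula~\eqref{eq:proj} and the local expansions of Lemma~\ref{lem:lowerorder}. First I would use Lemma~\ref{lem:proj} to write, for every $i$,
\[
P\wi(x)=-2\ln(\dei^{\ai}+|x|^{\ai})+4\pi\ai H(x,0)+O(\dei^{\ai}),
\]
and in particular $P\wi(x)-\wi(x)=-\ln(2\ai^2\dei^{\ai})+4\pi\ai H(x,0)+O(\dei^{\ai})$ for the $i=j$ term. Since $H(x,0)=h(0)+O(|x|)$ and on $\Aj/\dej$ we have $|x|=\dej|y|$ with $\dej|y|\to0$, every occurrence of $H(x,0)$ contributes $h(0)$ plus an $O(\dej|y|)$ error; the coefficient of $h(0)$ coming from $4\pi\sum_i \frac{(-1)^{i-1}}{\ga^{\sigma(i)}}\ai$ (with the appropriate $-\ga$ prefactor when $j$ is even) is precisely $\mp 4\pi h(0)\Akcal$, which is exactly the term inserted into the right-hand sides of~\eqref{def:adodd}--\eqref{def:adeven}.

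Next I would substitute the expansion of $\ln(\dei^{\ai}+|\dej y|^{\ai})$ from Lemma~\ref{lem:lowerorder}, splitting the sum over $i\ne j$ into $i<j$ and $i>j$. For $i<j$ the leading term is $\ai\ln(\dej|y|)$, which combines with the $(\aj-2)\ln|x|=(\aj-2)\ln(\dej|y|)$ term; for $i>j$ the leading term is $\ai\ln\dei$, i.e. $\ln\dei^{\ai}$. Collecting the $\ln(\dej|y|)$ coefficients and using the definition $\aj=2(1+\sum_{i<j}\frac{(-1)^i}{\ga^{\sigma(i)}}\ai)$ (resp. the even version) shows that the coefficient of $\ln(\dej|y|)$ vanishes identically — this is the \emph{raison d'être} of the choice of $\aj$. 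Collecting the constant terms — namely $-\ln(2\aj^2\dej^{\aj})$, the $\pm2\sum_{i>j}\frac{(-1)^i}{\ga^{\sigma(i)}}\ln\dei^{\ai}$ coming from the slower bubbles, the $\mp4\pi h(0)\Akcal$, and the $\ln\rho$ (resp. $\ln(\rho\tau\ga)$) — shows, by the very definition of $\dej$ in~\eqref{def:adodd}--\eqref{def:adeven}, that the constant term also vanishes. The remaining $\ln(1+|y|^{\ai})$-type term from $i=j$ cancels against $\wj(x)$ by definition of $\wj$. What is left is exactly the collection of error terms claimed: an $O(\dej|y|)$ from the Taylor expansion of each $H(\cdot,0)$, the $\sum_i O(\dei^{\ai})$ from the projection remainders, and the $\sum_{i<j}O(|y|^{-\ai}(\dei/\dej)^{\ai})$ and $\sum_{i>j}O(|y|^{\ai}(\dej/\dei)^{\ai})$ from the lower-order terms in Lemma~\ref{lem:lowerorder}.

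The only genuinely delicate point — and the step I would treat most carefully — is the exact bookkeeping of the logarithmic terms: one must check that \emph{both} the coefficient of $\ln(\dej|y|)$ \emph{and} the additive constant cancel, and this requires using the two defining identities~\eqref{def:adodd}--\eqref{def:adeven} in tandem with the explicit value of $\Akcal$ and Proposition~\ref{prop:aimain}. The asymmetry between the $j$ odd and $j$ even cases (the factor $-\ga$, the appearance of $\ln(\rho\tau\ga)$ versus $\ln\rho$, and the exponents $\sigma(i)$) means the computation must be carried out separately for the two parities, but the mechanism is identical. Everything else — the Taylor expansion of $H$, the elementary estimate $\ln(1+s^{\ai})\le \wj$-cancellation, and the passage from Lemma~\ref{lem:Aj}-type bounds to the stated $O$'s — is routine. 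Once Lemma~\ref{lem:Thetaj} is in hand, Proposition~\ref{prop:Thjmain} follows by invoking Lemma~\ref{lem:Aj} to convert the last two sums into $O(\rho^{q_{j-1}})+O(\rho^{q_j})$ and the projection remainders into $O(\rho^{\min_i r_i})$, whence~\eqref{eq:Thjmain} with $\bgk=\min\{r_i,q_i\}$; and Corollary~\ref{cor:Thj} follows by integrating $|x|^{\aj-2}e^{\wj}|e^{\Thj}-1|$ over $\Aj$ after the change of variables $x=\dej y$, using~\eqref{eq:Liouvillemass} to control the mass and the factor $\dej^{2(p-1)/p}$ from the Jacobian.
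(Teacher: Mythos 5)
Your proposal is correct and follows essentially the same route as the paper: expand each $P\wi$ via Lemma~\ref{lem:proj}, Taylor-expand $H(\cdot,0)$ about the origin to produce the $O(\dej|y|)$ term, substitute the case-by-case expansion of $\ln(\dei^{\ai}+|\dej y|^{\ai})$ from Lemma~\ref{lem:lowerorder}, and observe that the coefficient of $\ln|\dej y|$ and the additive constant both vanish by the defining relations~\eqref{def:adodd}--\eqref{def:adeven} for $\aj$ and $\dej$ respectively. The paper likewise carries out the bookkeeping separately for $j$ odd and $j$ even, so there is nothing substantive to add.
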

\begin{proof}
Suppose $j$ is odd.
Then, using the projection expansion \eqref{eq:proj} and Lemma~\ref{lem:lowerorder}, we have
\begin{align*}
\Thj(y)=&P\wj(\dej y)-\wj(\dej y)-(\aj-2)\ln|\dej y|+\ln\rho+\sum_{i\neq j}\frac{(-1)^{i-1}}{\ga^{\sigma(i)}}P\wi(\dej y)\\
=&-\ln(2\aj^2\dej^{\aj})+4\pi\aj h(\dej y)+O(\dej^{\aj})-(\aj-2)\ln|\dej y|+\ln\rho\\
&\quad+\sum_{i\neq j}\frac{(-1)^{i-1}}{\ga^{\sigma(i)}}\{-2\ln(\dei^{\ai}+|\dej y|^{\ai})+4\pi\ai h(\dej y)+O(\dei^{\ai})\}\\
=&-\ln(2\aj^2\dej^{\aj})+4\pi h(0)\sum_{i=1}^k\frac{(-1)^{i-1}}{\ga^{\sigma(i)}}\ai+\ln\rho+\sum_{i=1}^kO(\dei^{\ai})\\
&\quad-(\aj-2)\ln|\dej y|+O(\dej|y|)
-2\sum_{i<j}\frac{(-1)^{i-1}}{\ga^{\sigma(i)}}\left(\ai\ln(\dej|y|)+O\left(\frac{1}{|y|^{\ai}}(\frac{\dei}{\dej})^{\ai}\right)\right)\\
&\quad-2\sum_{i>j}\frac{(-1)^{i-1}}{\ga^{\sigma(i)}}\left(\ln\dei^{\ai}+O\left(|y|^\ai(\frac{\dej}{\dei})^{\ai}\right)\right)\\
=&\underbrace{-(\aj-2)\ln|\dej y|+2\sum_{i<j}\frac{(-1)^{i}}{\ga^{\sigma(i)}}\ai\ln|\dej y|}_{=0, \text{ in view of \eqref{def:adodd}}}\\
&\quad\underbrace{-\ln(2\aj^2\dej^{\aj})-4\pi h(0)\sum_{i=1}^k\frac{(-1)^{i}}{\ga^{\sigma(i)}}\ai+\ln\rho
+2\sum_{i>j}\frac{(-1)^{i}}{\ga^{\sigma(i)}}\ln\dei^{\ai}}_{=0, \text{ in view of  \eqref{def:adodd}}}\\
&\quad+O(\dej|y|)+\sum_{i=1}^kO(\dei^{\ai})+\sum_{i<j}O\left(\frac{1}{|y|^{\ai}}(\frac{\dei}{\dej})^{\ai}\right)
+\sum_{i>j}O\left(|y|^\ai(\frac{\dej}{\dei})^{\ai}\right)\\
=&O(\dej|y|)+\sum_{i=1}^kO(\dei^{\ai})+\sum_{i<j}O\left(\frac{1}{|y|^{\ai}}(\frac{\dei}{\dej})^{\ai}\right)
+\sum_{i>j}O\left(|y|^\ai(\frac{\dej}{\dei})^{\ai}\right).
\end{align*}
Now, suppose that $j$ is even.
We have, from \eqref{def:Thetaj}:
\begin{align*}
\Thj(y)=&P\wj(\dej y)-\wj(\dej y)-(\aj-2)\ln|\dej y|+\ln(\rho\tau\ga)
-\ga\sum_{i\neq j}\frac{(-1)^{i-1}}{\ga^{\sigma(i)}}P\wi(\dej y)\\
=&-\ln(2\aj^2\dej^{\aj})+4\pi\aj h(0)+O(|\dej y|)+O(\dej^{\aj})-(\aj-2)\ln|\dej y|+\ln(\rho\tau\ga)\\
&\quad-\ga\sum_{i\neq j}\frac{(-1)^{i-1}}{\ga^{\sigma(i)}}
\left\{-2\ln(\dei^{\ai}+|\dej y|^\ai)+4\pi\ai h(0)+O(\dei^{\ai})\right\}\\
=&\underbrace{-\ln(2\aj^2\dej^{\aj})-4\pi\ga h(0)\sum_{i\neq j}\frac{(-1)^{i-1}}{\ga^{\sigma(i)}}\ai
+2\ga\sum_{i>j}\frac{(-1)^{i-1}}{\ga^{\sigma(i)}}\ln\dei^{\ai}+\ln(\rho\tau\ga)}_{=0, \text{\ in view of \eqref{def:adeven}}}\\
&\quad\underbrace{-(\aj-2)\ln|\dej y|+2\ga\sum_{i<j}
\frac{(-1)^{i-1}}{\ga^{\sigma(i)}}\ai\ln|\dej y|}_{=0, \text{\ in view of \eqref{def:adeven}}}\\
&\quad+O(|\dej y|)+\sum_{i=1}^kO(\dei^{\ai})
+\sum_{i<j}O\left(\frac{1}{|y|^{\ai}}(\frac{\dei}{\dej})^{\ai}\right)+\sum_{i>j}O\left(|y|^\ai(\frac{\dej}{\dei})^{\ai})\right)\\
=&O(\dej|y|)+\sum_{i=1}^kO(\dei^{\ai})+\sum_{i<j}O\left(\frac{1}{|y|^{\ai}}(\frac{\dei}{\dej})^{\ai}\right)
+\sum_{i>j}O\left(|y|^\ai(\frac{\dej}{\dei})^{\ai})\right).
\end{align*}
The asserted expansion is completely established.
\end{proof}
Now we can prove Proposition~\ref{prop:Thjmain}.
\begin{proof}[Proof of Proposition~\ref{prop:Thjmain}]
We recall from Lemma~\ref{lem:Thetaj} that
\begin{equation*}
\Thj(y)=O(\dej|y|)+\sum_{i=1}^kO(\dei^{\ai})+\sum_{i<j}O(\frac{1}{|y|^{\ai}}(\frac{\dei}{\dej})^{\ai})
+\sum_{i>j}O(|y|^{\ai}(\frac{\dej}{\dei})^{\ai}).
\end{equation*}
We observe that in view of \eqref{eq:deltadecay} we have
\begin{equation*}
\dei^{\ai}=O(\rho^\ga),\qquad\mbox{for all }i=1,2,\ldots,k.
\end{equation*}
The remaining terms are estimated using Lemma~\ref{lem:Aj}.
Hence, \eqref{eq:Thjmain} and \eqref{eq:Thjcorollary} are established.
\end{proof}
\begin{proof}[Proof of Corollary \ref{cor:Thj}]
We begin by showing that
\par
\noindent\textit{Claim~1.}
If $j$ is odd:
\begin{equation}
\label{eq:basicodd}
\int_{\Aj}\left|\rho e^{\Wr}-|x|^{\aj-2}e^{\wj}\right|^p\le C\dej^{2(1-p)}\int_{\Aj/\dej}\frac{|y|^{(\aj-2)p}}{(1+|y|^{\aj})^{2p}}|\Thj(y)|^p\,dy.
\end{equation}
If $j$ is even,
\begin{equation}
\label{eq:basiceven}
\int_{\Aj}\left|\rho\tau e^{-\ga\Wr}-\frac{1}{\ga}|x|^{\aj-2}e^{\wj}\right|^p
\le C\frac{\dej^{2(1-p)}}{\ga^p}\int_{\Aj/\dej}\frac{|y|^{(\aj-2)p}}{(1+|y|^{\aj})^{2p}}|\Thj(y)|^p\,dy.
\end{equation}
Suppose $j$ is odd. Then, using \eqref{eq:Thjmasses}
\begin{align*}
\int_{\Aj}\left||x|^{\aj-2}e^{\wj}-\rho e^{\Wr}\right|^p
=&\int_{\Aj}|x|^{p(\aj-2)}e^{p\wj}\left|1-e^{\Thj(x/\dej)}\right|^p\,dx\\
\stackrel{y=x/\dej}{=}&\int_{\Aj/\dej}|\dej y|^{p(\aj-2)}e^{p\wj(\dej y)}|1-e^{\Thj(y)}|^p\dej^2\,dy\\
\le&C\int_{\Aj/\dej}|\dej y|^{p(\aj-2)}\left[\frac{\dej^{\aj}}{(\dej^\aj+|\dej y|^{\aj})^2}\right]^p|\Thj(y)|^p\dej^2\,dy\\
=&C\dej^{2(1-p)}\int_{\Aj/\dej}\frac{|y|^{p(\aj-2)}}{(1+|y|^{\aj})^{2p}}|\Thj(y)|^p\,dy.
\end{align*}
Similarly, if $j$ is even, we compute, using \eqref{eq:Thjmasses}:
\begin{align*}
\int_{\Aj}&\left||x|^{\aj-2}e^{\wj}-\rho\tau\ga e^{-\ga\Wr}\right|^p\,dx
=\int_{\Aj}|x|^{p(\aj-2)}e^{p\wj}\left|1-e^{\Thj(x/\dej)}\right|^p\,dx\\
&\stackrel{y=x/\dej}{=}\int_{\Aj/\dej}|\dej y|^{p(\aj-2)}e^{p\wj(\dej y)}\left|1-e^{\Thj(y)}\right|^p\dej^2\,dy\\
&\le C\dej^{2(1-p)}\int_{\Aj/\dej}\frac{|y|^{p(\aj-2)}}{(1+|y|^{\aj})^{2p}}|\Thj(y)|^p\,dy.
\end{align*}
\noindent
\smallskip\textit{Claim~2.}
The following decay estimates hold true.
\begin{equation*}
\dej^{2(1-p)}\int_{\Aj/\dej}\frac{|y|^{p(\aj-2)}}{(1+|y|^{\aj})^{2p}}|\Thj(y)|^p\,dy\le C\dej^{2(1-p)+p\bgk}
=o(\de_k^{2(1-p)+p\bgk}).
\end{equation*}
Indeed, in view of \eqref{eq:Thjmain} we have
\begin{align*}
\dej^{2(1-p)}&\int_{\Aj/\dej}\frac{|y|^{p(\aj-2)}}{(1+|y|^{\aj})^{2p}}|\Thj(y)|^p\,dy
\le C\dej^{2(1-p)}\int_{\Aj/\dej}\frac{|y|^{p(\aj-2)}}{(1+|y|^{\aj})^{2p}}\left|\dej|y|+\rho^{\bgk}\right|^p\\
&\le C\dej^{2(1-p)}\dej^p\int_{\Aj/\dej}\frac{|y|^{p(\aj-2)}|y|^p}{(1+|y|^{\aj})^{2p}}\,dy
+C\dej^{2(1-p)}\rho^{p\bgk}\int_{\Aj/\dej}\frac{|y|^{p(\aj-2)}}{(1+|y|^{\aj})^{2p}}\,dy.
\end{align*}
Since the integrals appearing above are uniformly bounded, the asserted decay rates follow.
\end{proof}
\section{The error terms $\Rrho$ and $\Srho$\\
(estimation of $\mathcal E_\pm^2,\mathcal E_\pm^3$)}
\label{sec:RrhoSrho}
We recall from Section~\ref{sec:Ansatz} that
\[
f(s):=e^s-\tau e^{-\ga s}
\]
and
\[
\begin{aligned}
\Rrho(x):=&\rho f(W_\rho)+\Delta W_\rho
=\rho f(W_\rho)-\sum_{i=1}^k\frac{(-1)^{i-1}}{\ga^{\sigma(i)}}|x|^{\ai-2}e^{\wi}
=\Ep-\Em\\
\Srho(x):=&\rho f'(\Wr)-\sum_{i=1}^k|x|^{\ai-2}e^{\wi}
=\Ep+\ga\Em,
\end{aligned}
\]
where
\begin{equation*}
\label{def:EpEm}
\begin{aligned}
\Ep:=&\rho e^{\Wr}-\sum_{\stackrel{1\le i\le k}{i\ \mathrm{odd}}}|x|^{\ai-2}e^{\wi}\\
\Em:=&\rho\tau e^{-\ga\Wr}-\frac{1}{\ga}\sum_{\stackrel{1\le i\le k}{i\ \mathrm{even}}}|x|^{\ai-2}e^{\wi}.
\end{aligned}
\end{equation*}
Our aim in this section is to obtain power decay estimates
for $\|\Ep\|_{L^p(\Omega)}$ and $\|\Em\|_{L^p(\Omega)}$,
for $p\ge1$, $p-1\ll1$.
More precisely, we 
establish the following
\begin{prop}
\label{prop:RS}
There exists $p_0>1$ such that for every $p\in[1,p_0)$ there exists $\overline\beta_p=\overline\beta_p(\tau,\ga,k)>0$ such that:
\begin{equation}
\label{est:E}
\|\Ep\|_{L^p(\Omega)}+\|\Em\|_{L^p(\Omega)}=O(\rho^{\overline\beta_p})
\end{equation}
and consequently
\begin{equation*}
\|\Rrho\|_{L^p(\Omega)}+\|\Srho\|_{L^p(\Omega)}=O(\rho^{\overline\beta_p})
\end{equation*}
\end{prop}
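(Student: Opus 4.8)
The strategy is to bound $\|\Ep\|_{L^p(\Omega)}$ and $\|\Em\|_{L^p(\Omega)}$ by exploiting the decomposition $\Ep=\Ep^1+\Ep^2+\Ep^3$ (and similarly for $\Em$) over the shrinking annuli $\Aj$ introduced in Section~\ref{sec:Ansatz}. The term $\Ep^1+\Em^1$ is already controlled by Corollary~\ref{cor:Thj}, which gives $\|\Ep^1\|_{L^p(\Omega)}+\|\Em^1\|_{L^p(\Omega)}=O(\rho^{\bgk-2s_1(p-1)/p})$, a power of $\rho$ for $p-1$ small enough; so the work is to obtain analogous power-decay bounds for $\Ep^2,\Ep^3$ and $\Em^2,\Em^3$. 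Collecting the three contributions, one then takes $\overline\beta_p$ to be the minimum of the resulting exponents and $p_0$ small enough that all of them stay positive; the consequence for $\Rrho=\Ep-\Em$ and $\Srho=\Ep+\ga\Em$ follows by the triangle inequality.

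\textbf{Estimation of $\Ep^3,\Em^3$ (the ``far-field'' terms).} These sums consist of the bubbles $|x|^{\ai-2}e^{\wi}$ restricted to annuli $\Aj$ with $j\neq i$, i.e. away from their own concentration scale $\dei$. On such an annulus one has $|x|\sim\sqrt{\de_j\de_{j\pm1}}$, so after rescaling $y=x/\dei$ the argument $|y|$ is either very large (if $j>i$) or very small (if $j<i$); in both regimes the profile $|y|^{\ai-2}/(1+|y|^{\ai})^2$ is bounded by a large negative power of the relevant ratio $\dei/\de_{j}$ or $\de_{j}/\dei$. By Proposition~\ref{prop:deimain} these ratios are powers of $\rho$, so each such term is $O(\rho^{\beta})$ in $L^p(\Aj)$ for some $\beta>0$ depending on $\ga,k,p$; summing the finitely many $(i,j)$ pairs gives $\|\Ep^3\|_{L^p(\Omega)}+\|\Em^3\|_{L^p(\Omega)}=O(\rho^{\beta})$. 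The key inputs here are exactly the quantitative gap estimates of Lemma~\ref{lem:Aj} together with the explicit decay rates $r_j,q_j$ of Proposition~\ref{prop:deimain}.

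\textbf{Estimation of $\Ep^2,\Em^2$ (the ``wrong-parity'' terms).} Here $\Ep^2=\sum_{j\ \mathrm{even}}\rho e^{\Wr}\chi_{\Aj}$ and $\Em^2=\sum_{j\ \mathrm{odd}}\rho\tau e^{-\ga\Wr}\chi_{\Aj}$: these measure the mass of $\rho e^{\Wr}$ (resp. $\rho\tau e^{-\ga\Wr}$) in annuli whose dominant bubble has the opposite sign. On $\Aj$ with $j$ even, the dominant term of $\Wr$ is $-\frac{1}{\ga}P\wj$, so $\rho e^{\Wr}\le \rho\, e^{-\frac{1}{\ga}P\wj + (\text{controlled remainder})}$, which decays because $P\wj$ is large and positive near scale $\dej$; one substitutes the projection expansion \eqref{eq:proj}, uses the definitions \eqref{def:adodd}--\eqref{def:adeven} of $\aj,\dej$ to identify the leading powers of $\rho$ and of $\dej$, and rescales as in Claim~1 of Corollary~\ref{cor:Thj}. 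This is the most delicate step: unlike the symmetric case $\ga=1$, the exponents $\ai$ are not monotone in $i$ (Remark~\ref{rmk:ainotincreasing}) and the $\dei^{\ai}$ do not scale linearly, so the bookkeeping of which terms dominate on each annulus must be done carefully using the recursive relations \eqref{eq:deltarecursive}. One expects the outcome to be a bound $O(\rho^{\beta})$ with $\beta$ again a positive combination of the $r_j,q_j$, possibly shifted by a $-2s_j(p-1)/p$ term from the rescaling Jacobian, hence positive for $p-1\ll1$.

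\textbf{Conclusion.} Setting $\overline\beta_p$ equal to the minimum of the (finitely many) positive exponents produced above and choosing $p_0>1$ so small that each of these exponents remains strictly positive for $p\in[1,p_0)$ yields \eqref{est:E}. Since $\Rrho=\Ep-\Em$ and $\Srho=\Ep+\ga\Em$, the triangle inequality immediately gives $\|\Rrho\|_{L^p(\Omega)}+\|\Srho\|_{L^p(\Omega)}=O(\rho^{\overline\beta_p})$. The main obstacle is the $\Ep^2,\Em^2$ estimate: it is precisely here that the asymmetry $\ga\neq1$ enters nontrivially, and where the non-monotonicity of the $\ai$'s forces new case analysis not present in \cite{GrossiPistoia}.
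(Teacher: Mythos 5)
Your proposal follows essentially the same route as the paper: the same decomposition $\Ep=\Ep^1+\Ep^2+\Ep^3$, $\Em=\Em^1+\Em^2+\Em^3$ over the annuli $\Aj$, with $\Ep^1,\Em^1$ handled by Corollary~\ref{cor:Thj}, the wrong-parity terms $\Ep^2,\Em^2$ by rewriting $\rho e^{\Wr}$ (resp.\ $\rho\tau e^{-\ga\Wr}$) through the bounded error function $\Thj$ and the recursive relations \eqref{eq:deltarecursive} (the paper's Lemmas~\ref{lem:I2decay} and \ref{lem:powerdecay}), and the far-field terms $\Ep^3,\Em^3$ by rescaling against the gap estimates of Proposition~\ref{prop:deimain} (the paper's Lemma~\ref{lem:I3}), concluding with $p$ close to $1$. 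Your write-up is a correct plan at sketch level rather than a worked-out estimate, but it matches the paper's argument step for step.
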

By taking $p=1$ in \eqref{est:E} and using \eqref{eq:Liouvillemass} we derive from the above:
\begin{cor}
\label{cor:masses}
For any $r>0$ there holds, as $\rho\to0^+$:
\begin{equation}
\label{est:mass}
\begin{aligned}
&\int_{B_r(0)}\rho e^{\Wr}\,dx=\sum_{\stackrel{1\le j\le k}{j\ \mathrm{odd}}}4\pi\aj+o(1)\\
&\int_{B_r(0)}\tau\rho e^{-\ga\Wr}\,dx=\frac{1}{\ga}\sum_{\stackrel{1\le j\le k}{j\ \mathrm{odd}}}4\pi\aj+o(1).
\end{aligned}
\end{equation}
Moreover, for any $q>1$ we have
\begin{equation}
\label{est:q}
\int_\Omega(\rho e^{\Wr})^q\,dx=O(\rho^{2s_k(1-q)}),
\qquad
\int_\Omega(\tau\rho e^{-\ga\Wr})^q\,dx=O(\rho^{2s_k(1-q)}).
\end{equation}
\end{cor}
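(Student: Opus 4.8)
The plan is to derive Corollary~\ref{cor:masses} directly from the estimate \eqref{est:E} in Proposition~\ref{prop:RS}, keeping in mind that the corollary is a statement about the approximate solution $\Wr$ alone. I would first record the definitions
\[
\Ep=\rho e^{\Wr}-\sum_{\stackrel{1\le i\le k}{i\ \mathrm{odd}}}|x|^{\ai-2}e^{\wi},
\qquad
\Em=\rho\tau e^{-\ga\Wr}-\frac{1}{\ga}\sum_{\stackrel{1\le i\le k}{i\ \mathrm{even}}}|x|^{\ai-2}e^{\wi},
\]
and observe that by Proposition~\ref{prop:RS} with $p=1$ we have $\|\Ep\|_{L^1(\Omega)}+\|\Em\|_{L^1(\Omega)}=O(\rho^{\overline\beta_1})=o(1)$.

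Proof of \eqref{est:mass}. Fix $r>0$. For each odd $i$, since $\dei=d_i\rho^{s_i}\to0$, by the change of variables $x=\dei y$ and dominated convergence (using $\int_{\rr^2}|y|^{\ai-2}e^{\wi(\dei y)}\,dy=4\pi\ai/ \text{[Liouville mass, \eqref{eq:Liouvillemass}]}$ up to the rescaling) one gets
\[
\int_{B_r(0)}|x|^{\ai-2}e^{\wi}\,dx\longrightarrow\int_{\rr^2}|x|^{\ai-2}e^{\wi}\,dx=4\pi\ai,
\]
while the tail $\int_{\rr^2\setminus B_r(0)}|x|^{\ai-2}e^{\wi}\,dx=O(\de_i^{\ai})=o(1)$ by the explicit profile of $w_\de^\al$; more precisely $|x|^{\ai-2}e^{\wi}=2\ai^2\dei^{\ai}|x|^{\ai-2}(\dei^{\ai}+|x|^{\ai})^{-2}$, which on $|x|\ge r$ is bounded by $C\dei^{\ai}|x|^{-\ai-2}$, integrable with integral $O(\dei^{\ai})$. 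Hence
\[
\int_{B_r(0)}\sum_{\stackrel{1\le i\le k}{i\ \mathrm{odd}}}|x|^{\ai-2}e^{\wi}\,dx=\sum_{\stackrel{1\le j\le k}{j\ \mathrm{odd}}}4\pi\aj+o(1),
\]
and adding the $L^1$-control $\big|\int_{B_r(0)}\Ep\big|\le\|\Ep\|_{L^1(\Omega)}=o(1)$ yields the first line of \eqref{est:mass}. The second line follows identically, using that the negative part carries the coefficient $1/\ga$ and that the Liouville bubbles involved are again indexed by \emph{odd} $i$ (because $\Em$ is defined with the even-indexed bubbles but, as in Corollary~\ref{cor:Thj} and the decomposition of $\Em$, the concentrated mass at $0$ of $\tau\rho e^{-\ga\Wr}$ matches $\tfrac1\ga\sum_{i\ \mathrm{odd}}4\pi\ai$ after accounting for the alternating structure); the clean way is to note $\tau\rho e^{-\ga\Wr}=\tfrac1\ga\sum_{i\ \mathrm{even}}|x|^{\ai-2}e^{\wi}+\Em$ and then compute $\tfrac1\ga\sum_{i\ \mathrm{even}}4\pi\ai$, which by \eqref{eq:ajsumeven} is not literally $\tfrac1\ga\sum_{i\ \mathrm{odd}}4\pi\aj$ unless one is careful — so the correct reading is that the corollary's right-hand side is precisely $\tfrac1\ga\sum_{j\ \mathrm{odd}}4\pi\aj$ only when $k$ is such that these sums coincide, and in general one simply keeps $\tfrac1\ga\sum_{j\ \mathrm{even}}4\pi\aj+o(1)$; I would double-check the indexing against the statement and keep whichever the paper's conventions dictate.

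Proof of \eqref{est:q}. Fix $q>1$ with $q-1$ small. Write $\rho e^{\Wr}=\sum_{i\ \mathrm{odd}}|x|^{\ai-2}e^{\wi}+\Ep$ and use $|a+b|^q\le 2^{q-1}(|a|^q+|b|^q)$. For the main term, by the elementary inequality $\big(\sum_{i}g_i\big)^q\le k^{q-1}\sum_i g_i^q$ for nonnegative $g_i$, it suffices to bound each $\int_\Omega(|x|^{\ai-2}e^{\wi})^q\,dx$. By the rescaling $x=\dei y$,
\[
\int_{\rr^2}\big(|x|^{\ai-2}e^{\wi}\big)^q\,dx
=\dei^{2-\ai q}\,(2\ai^2)^q\int_{\rr^2}\frac{|y|^{q(\ai-2)}}{(1+|y|^{\ai})^{2q}}\,dy
=C\,\dei^{2(1-q)},
\]
the $y$-integral being finite for $q>1$ (and for $q$ close to $1$, since near $0$ the exponent $q(\ai-2)\ge0$ is harmless as $\ai\ge2$, and at infinity the integrand decays like $|y|^{-\ai q-2q+q\ai}=|y|^{-2q}$, wait: like $|y|^{q(\ai-2)-2q\ai}=|y|^{-\ai q-2q}$, integrable). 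Since $\dei=d_i\rho^{s_i}$ and $s_i$ is decreasing in $i$ with $s_k=\min_i s_i$, we get $\dei^{2(1-q)}=O(\rho^{2s_k(1-q)})$ for every $i$ (because $1-q<0$ and $s_i\ge s_k$). For the error term, by Hölder with the trivial bound together with the improved $L^p_0$ bound \eqref{est:E}: actually the cleanest route is that by Corollary~\ref{cor:Thj} and the explicit estimates in Section~\ref{sec:Thetaj} one already has $\|\rho e^{\Wr}\|_{L^q(\Aj)}=O(\dej^{2(1-q)/q})$ on each annulus once $|\Thj|=O(1)$; summing over $j$ and using $s_j\ge s_k$ gives $\int_\Omega(\rho e^{\Wr})^q=O(\rho^{2s_k(1-q)})$ directly, and similarly for $\tau\rho e^{-\ga\Wr}$. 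I would present this annulus-by-annulus argument rather than the split, since it mirrors the proof of Corollary~\ref{cor:Thj}. The main obstacle here is purely bookkeeping: making sure the exponent $2s_k(1-q)$ is the dominant (most negative) one and that the cross term $\Ep$ (respectively $\Em$) in the split does not beat it — this is guaranteed because $\overline\beta_q$ in \eqref{est:E} is positive while $2s_k(1-q)<0$, so $\|\Ep\|_{L^q}^q=O(\rho^{q\overline\beta_q})$ is negligible compared to $O(\rho^{2s_k(1-q)})$. Assembling these bounds completes the proof.
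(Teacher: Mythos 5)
Your proof follows the paper's own route: \eqref{est:mass} is obtained from the $p=1$ case of \eqref{est:E} together with the Liouville mass \eqref{eq:Liouvillemass} and the tail bound $O(\dei^{\ai})$ outside $B_r$, and \eqref{est:q} from the bubble/error decomposition plus rescaling. You were also right to be suspicious of the second line of \eqref{est:mass}: since $\Em$ is built from the \emph{even}-indexed bubbles, the argument produces $\frac1\ga\sum_{j\ \mathrm{even}}4\pi\aj+o(1)$, which by \eqref{eq:ajsumeven} is exactly $m_-(0)$ from Theorem~\ref{thm:main}; the ``odd'' in the displayed corollary is a typo, and your resolution (keep the even-indexed sum) is the correct one.

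There is, however, a genuine slip in your justification of \eqref{est:q} --- one you share with the paper. You claim $\dei^{2(1-q)}=O(\rho^{2s_k(1-q)})$ ``because $1-q<0$ and $s_i\ge s_k$''. Those two facts give $2s_i(1-q)\le 2s_k(1-q)$, hence $\rho^{2s_i(1-q)}\ge\rho^{2s_k(1-q)}$ as $\rho\to0^+$: the inequality runs the wrong way. The correct uniform bound over $i$ is $O(\rho^{2s_1(1-q)})$, dictated by the fastest-concentrating bubble $\de_1$ (the \emph{smallest} $\dei$), not by $\de_k$. The paper's proof contains the identical assertion ``$O(\rho^{2s_j(1-q)})=O(\rho^{2s_k(1-q)})$'', so \eqref{est:q} as displayed should really carry the exponent $2s_1(1-q)$. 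This is harmless downstream, since in Lemma~\ref{lem:Nest} and the contraction argument one only needs an exponent of the form $C(1-q)$ with a fixed constant $C>0$, which tends to $0$ as $q\to1^+$ either way; but as written your step (and the paper's) does not establish the bound with $s_k$ for $k\ge2$. The remaining ingredients of your argument are sound: $\int_{\rr^2}|y|^{q(\ai-2)}(1+|y|^{\ai})^{-2q}\,dy$ is finite because $\ai\ge2$ and $q(\ai+2)>2$, and the error terms are absorbed via $\|\Ep\|_{L^q}^q+\|\Em\|_{L^q}^q=O(\rho^{q\overline\beta_q})=o(1)$, with the caveat (implicit in both proofs) that \eqref{est:E} is only available for $q$ close to $1$, which is all that is used later.
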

In order to prove Proposition~\ref{prop:RS} we recall from Section~\ref{sec:Ansatz} that
$\Ep=\Ep^1+\Ep^2+\Ep^3$, where
\begin{equation*}
\begin{aligned}
\Ep^1=&\sum_{\stackrel{1\le j\le k}{j\ \mathrm{odd}}}\Big(\rho e^{\Wr}-|x|^{\aj-2}e^{\wj}\Big)\chi_{\Aj}\\
\Ep^2=&\sum_{\stackrel{1\le j\le k}{j\ \mathrm{even}}}\rho e^{\Wr}\chi_{\Aj}\\
\Ep^3=&\sum_{\stackrel{1\le j\le k}{j\neq i}}\sum_{\stackrel{1\le i\le k}{i\ \mathrm{odd}}}|x|^{\ai-2}e^{\wi}\chi_{\Aj}
\end{aligned}
\end{equation*}
and $\Em=\Em^1+\Em^2+\Em^3$, where
\begin{equation*}
\begin{aligned}
\Em^1=&\sum_{\stackrel{1\le j\le k}{j\ \mathrm{even}}}\Big(\rho\tau e^{-\ga\Wr}-|x|^{\aj-2}e^{\wj}\Big)\chi_{\Aj}\\
\Em^2=&\sum_{\stackrel{1\le j\le k}{j\ \mathrm{odd}}}\rho\tau e^{-\ga\Wr}\chi_{\Aj}\\
\Em^3=&\frac{1}{\ga}\sum_{\stackrel{1\le j\le k}{j\neq i}}\sum_{\stackrel{1\le i\le k}{i\ \mathrm{even}}}|x|^{\ai-2}e^{\wi}\chi_{\Aj}.
\end{aligned}
\end{equation*}
The errors $\Ep^1,\Em^1$ are already estimated in Corollary~\ref{cor:Thj}.
We estimate $\Ep^2,\Em^2$.
To this end, we first establish the following auxiliary estimates.
\begin{lemma}
\label{lem:I2decay}
If $j$ is odd:
\begin{align*}
\int_{\Aj}|\rho\tau e^{-\ga\Wr}|^p
\le C\rho^{(1+\ga)p}\dej^{2(1+p\ga)}
\left[\left(\frac{\de_{j+1}}{\dej}\right)^{p\ga\frac{\aj+2}{2}+1}
+\left(\frac{\dej}{\de_{j-1}}\right)^{p\ga\frac{\aj-2}{2}-1}\right];
\end{align*}
if $j$ is even,
\begin{align*}
\int_{\Aj}|\rho e^{\Wr}|^p
\le C\rho^{(1+\frac{1}{\ga})p}(\tau\ga)^{\frac{p}{\ga}}\dej^{2(1+\frac{p}{\ga})}
\left[\left(\frac{\de_{j+1}}{\dej}\right)^{\frac{p}{\ga}\frac{\aj+2}{2}+1}
+\left(\frac{\dej}{\de_{j-1}}\right)^{\frac{p}{\ga}\frac{\aj-2}{2}-1}\right],
\end{align*}
where for the sake of simplicity it is understood that if $j=1$ only the first term on the right hand side exists
and if $j=k$ only the second term on the right hand side exists.
\end{lemma}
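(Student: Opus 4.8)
The plan is to estimate the integral of $|\rho\tau e^{-\ga\Wr}|^p$ on the annulus $\Aj$ for $j$ odd (the case $j$ even is entirely analogous, with the roles of the exponents $1+\ga$ and $1+\frac1\ga$, and of $\tau\ga$, swapped). The starting point is to rewrite $\rho\tau e^{-\ga\Wr}$ on $\Aj$ in terms of a single dominant bubble. Since $j$ is odd, the nearest even-indexed bubble is $w_{j+1}$ (or $w_{j-1}$), and the key observation is that on $\Aj$ the quantity $-\ga\Wr$ is, up to the error function $\Theta_{j\pm1}$ and lower-order terms, comparable to the profile of the $(j\pm1)$-th singular Liouville bubble. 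More precisely, I would use the defining relations \eqref{def:adeven} for the even index together with the projection expansion \eqref{eq:proj}, exactly as in the proof of Lemma~\ref{lem:Thetaj}, to write on $\Aj$
\[
\rho\tau e^{-\ga\Wr(x)}=\rho^{1+1/\ga}(\tau\ga)^{1/\ga}\,\frac{C}{\ga}\,|x|^{\aj-2}\,\frac{\de_{j+1}^{\al_{j+1}}}{(\de_{j+1}^{\al_{j+1}}+|x|^{\al_{j+1}})^{2}}\cdot e^{O(\Theta)}\cdot(1+\text{l.o.t.}),
\]
or rather the analogous clean identity that isolates the leading power of $\rho$ and the leading $|x|$-dependence; the factor $\rho^{1+1/\ga}(\tau\ga)^{1/\ga}$ comes from the recursive relation in Lemma~\ref{lem:deltarecursive} for odd $j$. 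The factor $\dej^{2(1+p\ga)}$ in the statement is the Jacobian and bubble-height bookkeeping after the substitution $y=x/\dej$.

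Concretely, the steps are: first, perform the change of variables $y=x/\dej$, which turns $\int_{\Aj}(\cdot)^p\,dx$ into $\dej^2\int_{\Aj/\dej}(\cdot)^p\,dy$, and use Proposition~\ref{prop:Thjmain} (in particular \eqref{eq:Thjcorollary}, $\sup_{\Aj/\dej}|\Thj|=O(1)$, and likewise for the neighboring error functions) to replace $e^{-\ga\Wr}$ by its explicit bubble expression up to a bounded multiplicative constant. Second, carry out the resulting elementary integral of the form
\[
\int_{\Aj/\dej}|y|^{\,p\ga(\aj-2)/2}\Big(\tfrac{\de_{j+1}^{\al_{j+1}}/\dej^{\al_{j+1}}}{\de_{j+1}^{\al_{j+1}}/\dej^{\al_{j+1}}+|y|^{\al_{j+1}}}\Big)^{?}\,dy
\]
over the dyadic-type annulus $\sqrt{\de_{j-1}/\dej}\le|y|<\sqrt{\de_{j+1}/\dej}$. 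Because the integrand is a pure power of $|y|$ (after absorbing the bubble denominator into the exponent appropriately on each sub-range), the integral is dominated by its values at the two endpoints $|y|=\sqrt{\de_{j+1}/\dej}$ and $|y|=\sqrt{\de_{j-1}/\dej}$, which is exactly what produces the two bracketed terms $(\de_{j+1}/\dej)^{p\ga(\aj+2)/2+1}$ and $(\dej/\de_{j-1})^{p\ga(\aj-2)/2-1}$ in the statement (the $+1$, $-1$ shifts coming from the $dy=|y|\,d|y|\,d\theta$ area element in $\mathbb R^2$). Third, collect the powers of $\rho$ and $\dej$ and of $\tau\ga$ to match the stated bound. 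The case $j=1$ (resp. $j=k$) is handled by noting $\de_0=0$ (resp. $\de_{k+1}=+\infty$), so that only the first (resp. second) endpoint contributes.

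The main obstacle I anticipate is not the integration itself — that is routine once the integrand is a monomial in $|y|$ — but rather the bookkeeping needed to pass from $\rho\tau e^{-\ga\Wr}$ to the explicit single-bubble form with the correct power of $\rho$. This requires carefully tracking, via \eqref{def:adeven} and the recursion in Lemma~\ref{lem:deltarecursive}, how the contributions of all the bubbles $w_i$ with $i\neq j$ combine on $\Aj$: the previous (faster-concentrating) bubbles $i<j$ contribute through $|x|^{\ai-2}$-type powers that the definition of $\aj$ is designed to absorb, while the subsequent (slower-concentrating) bubbles $i>j$ contribute through $\ln\dei^{\ai}$-type constants that the definition of $\dej$ absorbs; what survives is precisely the $(j\pm1)$-st bubble together with the explicit $\rho$-power. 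One must also be slightly careful that $|\Wr|$ may be large (of order $|\ln\rho|$) at the inner edge of $\Aj$, so the estimate $\sup|\Thj|=O(1)$ must genuinely be invoked at every point of $\Aj/\dej$ rather than just at a reference point; Proposition~\ref{prop:Thjmain} is exactly strong enough for this. Once this reduction is in hand, the endpoint-domination argument closes the lemma.
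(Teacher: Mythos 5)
There is a genuine gap in your reduction step. You propose to represent $\rho\tau e^{-\ga\Wr}$ on $\Aj$ (for $j$ odd) through the \emph{neighbouring} bubble $w_{j\pm1}$ and its error function $\Theta_{j\pm1}$, and then to absorb $e^{\Theta_{j\pm1}}$ into a bounded constant ``likewise'' to \eqref{eq:Thjcorollary}. But Proposition~\ref{prop:Thjmain} controls $\Theta_i$ only on the annulus $A_i/\de_i$ with the \emph{same} index $i$; it says nothing about $\Theta_{j+1}$ on $\Aj$, and that bound is in fact false there. Comparing the two exact representations of $\Wr$ on the inner part of $\Aj$ (where $|x|\ll\dej$), one finds $\Theta_{j+1}(x/\de_{j+1})\approx -2\ga\aj\ln|x|+O(1)$, which is of size $|\ln\rho|$, so $e^{\Theta_{j+1}}$ is an unbounded negative power of $\rho$ in that region: the definitions of $\al_{j+1},\de_{j+1}$ cancel the divergent logarithms only on $A_{j+1}$, not on $\Aj$. (Your own closing caveat about having to invoke the $O(1)$ bound ``at every point'' applies with full force here and defeats the plan.) A secondary symptom of the same confusion is that your displayed formula carries $|x|^{\aj-2}$ and an explicit $\rho^{1+1/\ga}$, whereas the exact identity coming from $\Theta_{j+1}$ would read $\rho\tau\ga e^{-\ga\Wr}=|x|^{\al_{j+1}-2}e^{w_{j+1}}e^{\Theta_{j+1}(x/\de_{j+1})}$, with no free power of $\rho$ and the exponent $\al_{j+1}-2$.

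The correct (and the paper's) reduction uses the error function of the annulus itself: for $j$ odd, the definition of $\Thj$ gives the exact identity
$\rho\tau e^{-\ga\Wr}=\tau\rho^{1+\ga}\,e^{-\ga\Thj(x/\dej)}\,|x|^{-\ga(\aj-2)}\bigl[(\dej^{\aj}+|x|^{\aj})^2/(2\aj^2\dej^{\aj})\bigr]^{\ga}$,
i.e.\ the $j$-th bubble enters with the ``wrong'' sign and, after rescaling $x=\dej y$, produces the reciprocal profile $(1+|y|^{\aj})^{2p\ga}/|y|^{(\aj-2)p\ga}$ together with the factors $\rho^{(1+\ga)p}\dej^{2(1+p\ga)}$. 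Here $e^{-p\ga\Thj}$ \emph{is} bounded by \eqref{eq:Thjcorollary}, and the remaining monomial integral over $\sqrt{\de_{j-1}/\dej}\le|y|<\sqrt{\de_{j+1}/\dej}$ is dominated by its two endpoint values, exactly by the mechanism you describe (including the treatment of $j=1$ and $j=k$ via $\de_0=0$, $\de_{k+1}=+\infty$). So your endpoint-domination step and Jacobian bookkeeping are sound, but the single-bubble reduction must be carried out with $\Thj$ and $e^{-\ga\wj}$, not with $\Theta_{j\pm1}$ and $e^{w_{j\pm1}}$.
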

\begin{proof}
We begin by showing the following.
\par
\textit{Claim~1.}
If $j$ is odd:
\begin{align*}
\int_{\Aj}|\rho\tau e^{-\ga\Wr}|^p\le C\rho^{(1+\ga)p}\dej^{2(1+p\ga)}
\int_{\Aj/\dej}e^{-p\ga\Thj(y)}\frac{(1+|y|^\aj)^{2p\ga}}{|y|^{(\aj-2)p\ga}}\,dy.
\end{align*}
If $j$ is even,
\begin{align*}
\int_{\Aj}|\rho e^{\Wr}|^p\le C\rho^{(1+\frac{1}{\ga})p}(\tau\ga)^{p/\ga}\dej^{2(1+p/\ga)}
\int_{\Aj/\dej}e^{-\frac{p}{\ga}\Thj(y)}\frac{(1+|y|^\aj)^{2p/\ga}}{|y|^{(\aj-2)p/\ga}}\,dy.
\end{align*}
Proof of Claim~1.
Suppose $j$ is odd.
We compute:
\begin{align*}
\int_{\Aj}|\rho\tau e^{-\ga\Wr}|^p
\le C&\rho^p\int_{\Aj}e^{-p\ga\sum_{i=1}^k\frac{(-1)^{i-1}}{\ga^{\si}}P\wi}
=C\rho^p\int_{\Aj}e^{-p\ga\Thj(x/\dej)}e^{p\ga(-\wj-(\aj-2)\ln|x|+\ln\rho)}\\
=&C\rho^{(1+\ga)p}\int_{\Aj}e^{-p\ga\Thj(\frac{x}{\dej})}\left[\frac{(\dej^{\aj}+|x|^{\aj})^2}{\dej^{\aj}}\right]^{p\ga}
\frac{dx}{|x|^{(\aj-2)p\ga}}\\
\stackrel{x=\dej y}{=}&C\rho^{(1+\ga)p}\dej^{2(1+p\ga)}\int_{\Aj/\dej}
e^{-p\ga\Thj(y)}\frac{(1+|y|^{\aj})^{2p\ga}}{|y|^{(\aj-2)p\ga}}\,dy\\
\le C&\rho^{(1+\ga)p}\dej^{2(1+p\ga)}
\int_{\sqrt{\frac{\de_{j-1}}{\dej}}\le|y|<\sqrt{\frac{\de_{j+1}}{\dej}}}
\frac{(1+|y|^{\aj})^{2p\ga}}{|y|^{(\aj-2)p\ga}}\,dy.
\end{align*}
Similarly, suppose $j$ is even.
We compute:
\begin{align*}
\int_{\Aj}|\rho e^{\Wr}|^p
=&\rho^p\int_{\Aj}e^{p\sum_{i=1}^k\frac{(-1)^{i-1}}{\ga^{\si}}P\wi}
=\rho^p\int_{\Aj}e^{-\frac{p}{\ga}\Thj(\frac{x}{\dej})-\frac{p}{\ga}[\wj+(\aj-2)\ln|x|-\ln(\rho\tau\ga)]}\\
=&\rho^p\int_{\Aj}e^{-\frac{p}{\ga}\Thj(\frac{x}{\dej})}\left[\frac{(\dej^{\aj}+|x|^{\aj})^2}{\dej^{\aj}}\right]^{p/\ga}
(\rho\tau\ga)^{p/\ga}\frac{dx}{|x|^{(\aj-2)p/\ga}}\\
\stackrel{x=\dej y}{=}&\rho^{p(1+\frac{1}{\ga})}\dej^{2(1+\frac{p}{\ga})}
\int_{\Aj/\dej}e^{-\frac{p}{\ga}\Thj(y)}\frac{(1+|y|^{\aj})^{2p/\ga}}{|y|^{(\aj-2)p/\ga}}(\tau\ga)^{p/\ga}\,dy.
\end{align*}
Claim~1 is thus established.
\par
\textit{Claim~2.}
For any $\eta>0$ we have:
\begin{equation*}
\int_{\Aj/\dej}\frac{(1+|y|^{\aj})^{2p\eta}}{|y|^{(\aj-2)p\eta}}\,dy
\le C\left[\left(\frac{\de_{j+1}}{\dej}\right)^{p\eta\frac{\aj+2}{2}+1}
+\left(\frac{\dej}{\de_{j-1}}\right)^{p\eta\frac{\aj-2}{2}-1}\right].
\end{equation*}
Proof of Claim~2.
We compute:
\begin{align*}
\int_{\Aj/\dej}\frac{(1+|y|^{\aj})^{2p\eta}}{|y|^{(\aj-2)p\eta}}\,dy
=&\int_{\sqrt{\frac{\de_{j-1}}{\de_{j}}}\le|y|<1}\frac{(1+|y|^{\aj})^{2p\eta}}{|y|^{(\aj-2)p\eta}}\,dy
+\int_{1\le|y|<\sqrt{\frac{\de_{j+1}}{\dej}}}\frac{(1+|y|^{\aj})^{2p\eta}}{|y|^{(\aj-2)p\eta}}\,dy\\
=:&I+II.
\end{align*}
We estimate, for $j\ge2$:
\begin{align*}
I\le C\int_{\sqrt{\frac{\de_{j-1}}{\de_{j}}}}^1\frac{r\,dr}{r^{(\aj-2)p\eta}}
\le C\left[1+\left(\frac{\dej}{\de_{j-1}}\right)^{p\eta\frac{\aj-2}{2}-1}\right].
\end{align*}
Similarly, for $j\le k-1$, we have
\begin{align*}
II\le C\int_1^{\sqrt{\frac{\de_{j+1}}{\de_{j}}}}
|y|^{2p\eta\aj-p\eta(\aj-2)}\,dy=\int_1^{\sqrt{\frac{\de_{j+1}}{\de_{j}}}}
|y|^{p\eta(\aj+2)}\,dy\le C\left(\frac{\de_{j+1}}{\dej}\right)^{p\eta\frac{\aj+2}{2}+1}.
\end{align*}
\end{proof}
\begin{lemma}
\label{lem:powerdecay}
The following power decay rates hold true.
\par
If $j$ is odd:
\begin{equation*}
\begin{aligned}
&\rho^{(1+\ga)p}\dej^{2(1+p\ga)}\left(\frac{\de_{j+1}}{\dej}\right)^{p\ga\frac{\aj+2}{2}+1}
=O\left(\rho^{(1+\ga)\frac{p}{2}}\left(\frac{\dej}{\de_{j+1}}\right)^{1+p\ga}\de_{j+1}^{2+p\ga-p}\right)\\
&\rho^{(1+\ga)p}\dej^{2(1+p\ga)}\left(\frac{\dej}{\de_{j-1}}\right)^{p\ga\frac{\aj-2}{2}-1}
=O\left(\rho^{(1+\ga)\frac{p}{2}}\left(\frac{\de_{j-1}}{\dej}\right)^{1-p}\dej^{2+p\ga-p}\right);
\end{aligned}
\end{equation*}
if $j$ is even:
\begin{equation*}
\begin{aligned}
&\rho^{(1+\frac{1}{\ga})p}\dej^{2(1+\frac{p}{\ga})}
\left(\frac{\de_{j+1}}{\dej}\right)^{\frac{p}{\ga}\frac{\aj+2}{2}+1}
=O\left(\rho^{(1+\frac{1}{\ga})\frac{p}{2}}\left(\frac{\dej}{\de_{j+1}}\right)^{1+p/\ga}\de_{j+1}^{2+\frac{p}{\ga}-p}\right)\\
&\rho^{(1+\frac{1}{\ga})p}\dej^{2(1+p/\ga)}\left(\frac{\dej}{\de_{j-1}}\right)^{\frac{p}{\ga}\frac{\aj-2}{2}-1}
=O\left(\rho^{(1+\frac{1}{\ga})\frac{p}{2}}\left(\frac{\de_{j-1}}{\dej}\right)^{-p+1}\dej^{2+\frac{p}{\ga}-p}\right)
\end{aligned}
\end{equation*}
\end{lemma}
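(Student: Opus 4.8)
The plan is to verify each of the four asymptotic identities in the most economical way possible: divide the left-hand side by the right-hand side, simplify, and check that the resulting quotient is a fixed positive constant (independent of $\rho$). Fix $j$ and suppose first that $j$ is odd with $2\le j\le k-1$, so that both terms are present. For the first identity, forming the quotient of the left-hand side by the right-hand side and collecting separately the powers of $\rho$, of $\dej$, and of $\de_{j+1}$, an elementary simplification shows that this quotient equals
\[
\Bigl[\rho^{1+\ga}\,\dej^{-\ga\aj}\,\de_{j+1}^{\,\ga(\aj+2)+2}\Bigr]^{p/2}.
\]
The crucial observation is that $\ga(\aj+2)+2=\al_{j+1}$: since $j+1$ is even, the recursive formula \eqref{eq:arec} gives $\al_{j+1}=\ga\aj+2(1+\ga)=\ga(\aj+2)+2$. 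Hence the bracket equals $\rho^{1+\ga}\dej^{-\ga\aj}\de_{j+1}^{\al_{j+1}}$, and raising the odd-index case of Lemma~\ref{lem:deltarecursive} to the power $\ga$ yields $\de_{j+1}^{\al_{j+1}}=\ka_j^{-\ga}\rho^{-(1+\ga)}\dej^{\ga\aj}$. Therefore the bracket reduces to the constant $\ka_j^{-\ga}$ and the quotient is $\ka_j^{-\ga p/2}$, which proves the first identity.

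For the second identity with $j$ odd the same bookkeeping produces the quotient $\bigl[\rho^{1+\ga}\dej^{\ga\aj}\de_{j-1}^{-\al_{j-1}}\bigr]^{p/2}$, and here \eqref{eq:arec} is used in the rearranged form $\al_{j-1}=\ga(\aj-2)-2$ (valid since $j$ is odd, rewriting $\aj=\tfrac1\ga\al_{j-1}+2(1+\tfrac1\ga)$); since $j-1$ is even, Lemma~\ref{lem:deltarecursive} gives $\de_{j-1}^{\al_{j-1}}=\ka_{j-1}\rho^{1+\ga}\dej^{\ga\aj}$, so the bracket equals $\ka_{j-1}^{-1}$ and the quotient is $\ka_{j-1}^{-p/2}$. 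The two identities for $j$ even are treated identically: the quotients come out to be $\bigl[\rho^{1+1/\ga}\dej^{-\aj/\ga}\de_{j+1}^{\al_{j+1}}\bigr]^{p/2}$ and $\bigl[\rho^{1+1/\ga}\dej^{\aj/\ga}\de_{j-1}^{-\al_{j-1}}\bigr]^{p/2}$, where now \eqref{eq:arec} furnishes $\al_{j+1}=\tfrac1\ga(\aj+2)+2$ (because $j+1$ is odd) and $\al_{j-1}=\tfrac1\ga(\aj-2)-2$ (because $j$ is even), and the even-index, resp. odd-index, case of Lemma~\ref{lem:deltarecursive} collapses each bracket to $\ka_j^{-1/\ga}$, resp. $\ka_{j-1}^{-1}$.

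It remains only to address the endpoint indices, where one of the two terms is absent: for $j=1$ the $\de_0$-term does not occur and for $j=k$ the $\de_{k+1}$-term does not occur, and in each case the surviving term is estimated exactly as above, using the recursion of Lemma~\ref{lem:deltarecursive} linking $\dej$ to $\de_{j+1}$ (respectively $\de_{j-1}$). In every case the quotient is a fixed positive constant, so the left-hand side is $O$ of the right-hand side, as claimed. I do not expect a genuine conceptual obstacle here: the statement is of a purely computational nature, and the only delicate point is organizing the exponent bookkeeping so that the algebraic identities $\ga^{\pm1}(\aj\pm2)\pm2=\al_{j\pm1}$ coming from \eqref{eq:arec}, together with the recursion of Lemma~\ref{lem:deltarecursive}, are applied at the right moment. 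Keeping the exponents of $\rho$, of $\dej$, and of $\de_{j\pm1}$ separated throughout the computation makes the cancellations transparent and essentially mechanical.
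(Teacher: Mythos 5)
Your proposal is correct and follows essentially the same route as the paper: both arguments hinge on the identities $\al_{j\pm1}=\ga^{\pm1}(\aj\pm2)\pm2$ from \eqref{eq:arec} together with the recursion $\dej^{\aj}=\ka_j\rho^{1+\ga^{\mp1}}(\de_{j+1}^{\al_{j+1}})^{\ga^{\mp1}}$ of Lemma~\ref{lem:deltarecursive}, the only difference being that you organize the computation as a quotient reducing to the constant $\ka_j^{-\ga}$, $\ka_{j-1}^{-1}$, etc., while the paper rewrites the left-hand side directly. The exponent bookkeeping in all four cases checks out.
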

\begin{proof}
Proof of the first decay rate for $j$ odd.
Since $j+1$ is even, in view of the recursive formula~\eqref{eq:arec} we have 
$\ga\aj=\al_{j+1}-2(1+\ga)$ and therefore
\[
p\ga\frac{\aj+2}{2}=\frac{p}{2}(\al_{j+1}-2(1+\ga))+p\ga=\frac{p}{2}\al_{j+1}-p.
\]
We deduce that
\begin{align*}
\rho^{(1+\ga)p}\dej^{2(1+p\ga)}&\left(\frac{\de_{j+1}}{\dej}\right)^{p\ga\frac{\aj+2}{2}+1}\\
=&\rho^{(1+\ga)p}\dej^{2(1+p\ga)}\frac{\de_{j+1}^{\frac{p}{2}\al_{j+1}-p+1}}{\dej^{p\ga\frac{\aj+2}{2}+1}}
=\rho^{(1+\ga)p}\dej^{2(1+p\ga)}\frac{(\de_{j+1}^{\al_{j+1}})^{p/2}\de_{j+1}^{-p+1}}{(\dej^{\aj})^{p\ga/2}\dej^{p\ga+1}}\\
=&\rho^{(1+\ga)p}\dej^{1+p\ga}\frac{(\de_{j+1}^{\al_{j+1}})^{p/2}}{(\dej^{\aj})^{p\ga/2}}\de_{j+1}^{1-p}.
\end{align*}
Recalling from \eqref{eq:deltarecursive} that $\dej^{\aj}=\kappa_j\rho^{1+1/\ga}(\de_{j+1}^{\al_{j+1}})^{1/\ga}$,
in turn we derive that
\begin{align*}
\rho^{(1+\ga)p}\dej^{1+p\ga}
\frac{(\de_{j+1}^{\al_{j+1}})^{p/2}}{[\rho^{1+1/\ga}(\de_{j+1}^{\al_{j+1}})^{1/\ga}]^{p\ga/2}}\de_{j+1}^{1-p}
=\rho^{(1+\ga)p/2}\left(\frac{\dej}{\de_{j+1}}\right)^{1+p\ga}\de_{j+1}^{1+p\ga-p+1},
\end{align*}
and the asserted estimate follows.
\par
Proof of the second decay rate for $j$ odd.
In view of the recursive formula~\eqref{eq:arec}, we have $\aj=\al_{j-1}/\ga+2(1+1/\ga)$ and therefore
\[
p\ga\frac{\aj}{2}=\frac{p}{2}\al_{j-1}+p(\ga+1).
\]
It follows that
\begin{align*}
\rho^{(1+\ga)p}\dej^{2(1+p\ga)}&\left(\frac{\dej}{\de_{j-1}}\right)^{p\ga\frac{\aj-2}{2}-1}
=\rho^{(1+\ga)p}\dej^{2(1+p\ga)}\frac{(\dej^{\aj})^{p\ga/2}\dej^{-p\ga-1}}{\de_{j-1}^{p\ga\frac{\aj-2}{2}-1}}\\
=&\rho^{(1+\ga)p}\dej^{2(1+p\ga)}\frac{(\dej^{\aj})^{p\ga/2}\dej^{-p\ga-1}}{\de_{j-1}^{\frac{p}{2}\al_{j-1}+p(\ga+1)-p\ga-1}}
=\rho^{(1+\ga)p}\dej^{1+p\ga}\frac{(\dej^{\aj})^{p\ga/2}}{(\de_{j-1}^{\al_{j-1}})^{p/2}\de_{j-1}^{p-1}}.
\end{align*}
Since $j-1$ is even, in view of the recursive formula~\eqref{eq:deltarecursive} that
$\de_{j-1}^{\al_{j-1}}=\kappa_{j-1}\rho^{1+\ga}(\dej^{\aj})^\ga$.
We deduce that
\begin{align*}
\rho^{(1+\ga)p}\dej^{1+p\ga}
\frac{(\dej^{\aj})^{p\ga/2}}{[\rho^{1+\ga}(\dej^{\aj})^\ga]^{p/2}\de_{j-1}^{p-1}}
=\rho^{(1+\ga)p/2}\dej^{1+p\ga}\de_{j-1}^{1-p}
=\rho^{(1+\ga)p}\dej^{1+p\ga-p+1}\left(\frac{\de_{j-1}}{\dej}\right)^{1-p},
\end{align*}
and the asserted estimate follows.
\par
Proof of the first estimate for $j$ is even.
Since $j+1$ is odd, in view of the recursive formula~\eqref{eq:arec}, we have:
$\aj/\ga=\al_{j+1}-2(1+1/\ga)$.
Hence, we may write
\begin{align*}
\de_{j+1}^{\frac{p}{\ga}\frac{\aj+2}{2}+1}=\de_{j+1}^{\frac{p}{2}(\al_{j+1}-2(1+1/\ga))+p/\ga+1}
=(\de_{j+1}^{\al_{j+1}})^{p/2}\de_{j+1}^{-p+1}
\end{align*}
Hence,
\begin{align*}
\left(\frac{\de_{j+1}}{\dej}\right)^{\frac{p}{\ga}\frac{\aj+2}{2}+1}
=\frac{(\de_{j+1}^{\al_{j+1}})^{p/2}\de_{j+1}^{-p+1}}{(\dej^{\aj})^{\frac{p}{2\ga}}\dej^{\frac{p}{\ga}+1}}
\end{align*}
Since $j$ is even, in view of the recursive formula~ \eqref{eq:deltarecursive} we have that 
$\dej^{\aj}=\kappa_j\rho^{1+\ga}(\de_{j+1}^{\al_{j+1}})^\ga$.
We deduce that
\begin{align*}
\rho^{(1+\frac{1}{\ga})p}\dej^{2(1+\frac{p}{\ga})}
\left(\frac{\de_{j+1}}{\dej}\right)^{\frac{p}{\ga}\frac{\aj+2}{2}+1}=&\rho^{(1+\frac{1}{\ga})p}\dej^{2(1+\frac{p}{\ga})}
\frac{(\de_{j+1}^{\al_{j+1}})^{p/2}\de_{j+1}^{-p+1}}{[\rho^{1+\ga}(\de_{j+1}^{\al_{j+1}})^\ga]^{\frac{p}{{2\ga}}}\dej^{\frac{p}{\ga}+1}}
=\rho^{(1+\frac{1}{\ga})\frac{p}{2}}\dej^{1+\frac{p}{\ga}}\de_{j+1}^{-p+1}\\
=&\rho^{(1+\frac{1}{\ga})\frac{p}{2}}\left(\frac{\dej}{\de_{j+1}}\right)^{1+p/\ga}\de_{j+1}^{1+\frac{p}{\ga}-p+1},
\end{align*}
as desired.
\par
Proof of the second estimate for $j$ even.
Since $j$ is even, in view of the recursive formula \eqref{eq:arec} that $\aj/\ga=\al_{j-1}+2(1+1/\ga)$
and therefore
\[
\de_{j-1}^{\frac{p}{\ga}\frac{\aj-2}{2}-1}= \de_{j-1}^{\frac{p}{2}(\al_{j-1}+2(1+\frac{1}{\ga}))-\frac{p}{\ga}-1}
=(\de_{j-1}^{\al_{j-1}})^{p/2}\de_{j-1}^{p-1}.
\]
We deduce that
\[
\left(\frac{\dej}{\de_{j-1}}\right)^{\frac{p}{\ga}\frac{\aj-2}{2}-1}=
\frac{(\dej^{\aj})^{\frac{p}{2\ga}}\dej^{-\frac{p}{\ga}-1}}{(\de_{j-1}^{\al_{j-1}})^{p/2}\de_{j-1}^{p-1}}
\]
Since $j-1$ is odd, in view of \eqref{eq:deltarecursive} we have:
\[
\de_{j-1}^{\al_{j-1}}=\kappa_{j-1}\rho^{1+\frac{1}{\ga}}(\dej^\aj)^{1/\ga}.
\]
It follows that
\[
\left(\frac{\dej}{\de_{j-1}}\right)^{\frac{p}{\ga}\frac{\aj-2}{2}-1}
\le C
\frac{(\dej^{\aj})^{\frac{p}{2\ga}}\dej^{-\frac{p}{\ga}-1}}{(\rho^{1+\frac{1}{\ga}}(\dej^\aj)^{1/\ga})^{p/2}\de_{j-1}^{p-1}}
=C\frac{\dej^{-\frac{p}{\ga}-1}}{\rho^{\frac{p}{2}(1+\frac{1}{\ga})}\de_{j-1}^{p-1}}.
\]
Consequently,
\begin{align*}
\rho^{(1+\frac{1}{\ga})p}\dej^{2(1+p/\ga)}\left(\frac{\dej}{\de_{j-1}}\right)^{\frac{p}{\ga}\frac{\aj-2}{2}-1}
=&\rho^{(1+\frac{1}{\ga})p}\dej^{2(1+p/\ga)}\frac{\dej^{-\frac{p}{\ga}-1}}{\rho^{\frac{p}{2}(1+\frac{1}{\ga})}\de_{j-1}^{p-1}}
=\rho^{\frac{p}{2}(1+\frac{1}{\ga})}\dej^{1+\frac{p}{\ga}}\de_{j-1}^{-p+1}\\
=&\rho^{\frac{p}{2}(1+\frac{1}{\ga})}\left(\frac{\de_{j-1}}{\dej}\right)^{-p+1}\dej^{1+\frac{p}{\ga}-p+1},
\end{align*}
as desired.
The asserted decay estimates are completely established.
\end{proof}
\begin{lemma}
\label{lem:I3}
There holds:
\begin{equation*}
\int_{\Aj}\left||x|^{\ai-2}e^{\wi}\right|^p\,dx
=
\begin{cases}
O\left(\dei^{2(1-p)}\left(\frac{\dej}{\de_{j+1}}\right)^{p\frac{\ai-2}{2}+1}\right),&\mbox{if\ $i>j$}\\
O\left(\dei^{2(1-p)}\left(\frac{\de_{j-1}}{\dej}\right)^{p\frac{\ai}{2}+p+1}\right),&\mbox{if\ $i<j$}.
\end{cases}
\end{equation*}
\end{lemma}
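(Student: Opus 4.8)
\noindent\emph{Proof plan.} The plan is to reduce the whole estimate to a single one-dimensional radial integral. From the explicit form of the bubble,
\[
\bigl||x|^{\ai-2}e^{\wi}\bigr|^p=(2\ai^2)^p\,\frac{|x|^{(\ai-2)p}\,\dei^{\ai p}}{\bigl(\dei^{\ai}+|x|^{\ai}\bigr)^{2p}},
\]
so the only thing that matters is a lower bound for the denominator $\dei^{\ai}+|x|^{\ai}$ on $\Aj$, after which one integrates a pure power of $|x|$ over the annulus. Recall that on $\Aj$ one has $\sqrt{\de_{j-1}\dej}\le|x|<\sqrt{\dej\de_{j+1}}$, and that $s_i>s_{i+1}$ forces $\de_\ell=o(\de_{\ell+1})$; hence for $i>j$ the $i$-th bubble is wider than $\Aj$, in the sense that $|x|<\sqrt{\dej\de_{j+1}}\le\de_{j+1}\le\dei$ throughout $\Aj$, whereas for $i<j$ it is narrower, $|x|\ge\sqrt{\de_{j-1}\dej}\ge\de_{j-1}\ge\dei$ throughout $\Aj$. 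I would split the proof into these two regimes; the first is nonempty only when $j\le k-1$ and the second only when $j\ge2$, so $\de_{j+1}$, resp.\ $\de_{j-1}$, is always a genuine positive parameter.

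\smallskip
\noindent\emph{The regime $i>j$.} Drop $|x|^{\ai}$ from the denominator: $\dei^{\ai}+|x|^{\ai}\ge\dei^{\ai}$. Since $\ai\ge2$ and $p\ge1$, the exponent $(\ai-2)p+2\ge2$ is positive, so $\int_{|x|<\sqrt{\dej\de_{j+1}}}|x|^{(\ai-2)p}\,dx$ equals a constant times $(\dej\de_{j+1})^{(\ai-2)p/2+1}$, and therefore
\[
\int_{\Aj}\bigl||x|^{\ai-2}e^{\wi}\bigr|^p\,dx\le C\,\dei^{-\ai p}\bigl(\dej\de_{j+1}\bigr)^{\mu},\qquad \mu:=\tfrac{(\ai-2)p}{2}+1>0.
\]
Writing $\dei^{-\ai p}=\dei^{2(1-p)}(\dei^{2})^{-\mu}$ and using $\dei\ge\de_{j+1}$ to get $\dej\de_{j+1}/\dei^{2}\le\dej/\de_{j+1}$, the right-hand side is $\le C\,\dei^{2(1-p)}(\dej/\de_{j+1})^{\mu}$, which is the claimed bound.

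\smallskip
\noindent\emph{The regime $i<j$.} Here the correct lower bound is $\dei^{\ai}+|x|^{\ai}\ge|x|^{\ai}$, since it is now $|x|$ that is large on $\Aj$. Because $(\ai+2)p-2\ge2>0$, the improper integral $\int_{|x|\ge\sqrt{\de_{j-1}\dej}}|x|^{-(\ai+2)p}\,dx$ converges to a constant times $(\de_{j-1}\dej)^{1-(\ai+2)p/2}$, whence
\[
\int_{\Aj}\bigl||x|^{\ai-2}e^{\wi}\bigr|^p\,dx\le C\,\dei^{\ai p}\bigl(\de_{j-1}\dej\bigr)^{-\nu},\qquad \nu:=\tfrac{(\ai+2)p}{2}-1>0.
\]
Writing $\dei^{\ai p}=\dei^{2(1-p)}(\dei^{2})^{\nu}$ and using $\dei\le\de_{j-1}$ to get $\dei^{2}/(\de_{j-1}\dej)\le\de_{j-1}/\dej$, the right-hand side is $\le C\,\dei^{2(1-p)}(\de_{j-1}/\dej)^{\nu}$, with $\nu=\tfrac{(\ai+2)p}{2}-1=p\tfrac{\ai}{2}+p-1>0$, which is an estimate of the asserted form.

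\smallskip
\noindent\emph{Where the work lies.} There is no genuine obstacle here: once $\Aj$ is split according to whether $w_i$ concentrates faster or slower than the scale $\dej$, everything is elementary. The only points deserving care are (i) that the radial integral is governed by the \emph{correct} endpoint of $\Aj$ — the outer radius when $i>j$, the inner radius when $i<j$ — which is exactly why the sign conditions $(\ai-2)p+2>0$ and $(\ai+2)p-2>0$ are needed, both guaranteed by $\ai\ge2$ together with $p\ge1$; and (ii) the bookkeeping that absorbs $\dei^{\mp\ai p}$ into the prefactor $\dei^{2(1-p)}$ using only the orderings $\dei\le\de_{j-1}$ for $i\le j-1$ and $\dei\ge\de_{j+1}$ for $i\ge j+1$, both immediate from $\de_\ell=o(\de_{\ell+1})$.
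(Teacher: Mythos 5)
Your argument is correct and is essentially the paper's: both proofs reduce to bounding $\dei^{\ai}+|x|^{\ai}$ from below by whichever of $\dei^{\ai}$ or $|x|^{\ai}$ dominates on $\Aj$, integrating a pure power of the radius over the annulus, and then absorbing the leftover powers of $\dei$ using the ordering of the concentration parameters (the paper first rescales $x=\dei y$, which is only cosmetic). One remark on the case $i<j$: the exponent $p\frac{\ai}{2}+p-1$ you obtain is the one the computation actually produces, and the exponent $p\frac{\ai}{2}+p+1$ printed in the lemma appears to be a misprint — for instance with $p=1$, $i=1$ (so $\ai=2$) and $j=2$ the integral is of exact order $\de_1/\de_2$, not $O\left((\de_1/\de_2)^{3}\right)$. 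The discrepancy is harmless downstream, since only the positivity of the exponent is used.
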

\begin{proof}
We have:
\begin{align*}
\int_{\Aj}&
|x|^{p(\ai-2)}\left[\frac{\dei^{\ai}}{(\dei^{\ai}+|x|^{\ai})^2}\right]^p\,dx
=\dei^{p\ai}\int_{\Aj}\frac{|x|^{p(\ai-2)}}{(\dei^{\ai}+|x|^{\ai})^{2p}}\,dx\\
&\stackrel{x=\dei y}{=}
\dei^{2(1-p)}\int_{\Aj/\dei}\frac{|y|^{p(\ai-2)}}{(1+|y|^{\ai})^{2p}}\,dy.
\end{align*}
Suppose $i>j$ (i.e., $i\ge j+1$).
Then, $\sqrt{\dej\de_{j+1}}/\dei=o(\sqrt{\dej/\de_{j+1}})=o(1)$ as $\rho\to0^+$
and therefore
\begin{align*}
\int_{\Aj/\dei}\frac{|y|^{p(\ai-2)}}{(1+|y|^{\ai})^{2p}}\,dy
\le C\int_{\frac{\sqrt{\de_{j-1}\dej}}{\dei}}^{\frac{\sqrt{\dej\de_{j+1}}}{\dei}}
r^{p(\ai-2)}\,rdr
\le C\left(\frac{\dej}{\de_{j+1}}\right)^{p\frac{\ai-2}{2}+1}.
\end{align*}
Suppose $i<j$ (i.e., $i\le j-1$). Then, 
$\sqrt{\de_{j-1}\dej}/\dei\ge C^{-1}\sqrt{\dej/\de_{j-1}}\to+\infty$ as $\rho\to0^+$
and therefore
\begin{align*}
\int_{\Aj/\dei}\frac{|y|^{p(\ai-2)}}{(1+|y|^{\ai})^{2p}}\,dy
\le C\int_{\frac{\sqrt{\de_{j-1}\dej}}{\dei}}^{\frac{\sqrt{\dej\de_{j+1}}}{\dei}}
r^{p(\ai-2)-2p\ai}\,rdr
\le C\left(\frac{\de_{j-1}}{\dej}\right)^{p\frac{\ai}{2}+p+1}.
\end{align*}
The asserted decay estimates are thus established.
\end{proof}
Now we can provide the proof of Proposition~\ref{prop:RS}.
\begin{proof}[Proof of Proposition~\ref{prop:RS}]
The proof is a direct consequence of Lemma~\ref{lem:I2decay},
Lemma~\ref{lem:powerdecay} and Lemma~\ref{lem:I3}.
\end{proof}
\begin{proof}[Proof of Corollary~\ref{cor:masses}]
We  decompose:
\begin{equation*}
\begin{aligned}
\int_{B_r}\rho e^{\Wr}=&\sum_{j=1}^k\int_{B_r\cap\Aj}\rho e^{\Wr}\\
=&\sum_{\stackrel{j=1}{j\ \mathrm{odd}}}^k\int_{B_r\cap\Aj}|x|^{\aj-2}e^{\wj}
+\sum_{\stackrel{j=1}{j\ \mathrm{odd}}}^k\int_{B_r\cap\Aj}(\rho e^{\Wr}-|x|^{\aj-2}e^{\wj})
+\sum_{\stackrel{j=1}{j\ \mathrm{even}}}^k\int_{B_r\cap\Aj}\rho e^{\Wr}\\
=&\sum_{\stackrel{j=1}{j\ \mathrm{odd}}}^k\int_{B_r\cap\Aj}|x|^{\aj-2}e^{\wj}
+\int_{B_r}\Ep^1+\int_{B_r}\Ep^2.
\end{aligned}
\end{equation*}
Using \eqref{est:E} with $p=1$, we obtain
\begin{equation*}
\int_{B_r}\rho e^{\Wr}=\sum_{\stackrel{j=1}{j\ \mathrm{odd}}}^k\int_{B_r\cap\Aj}|x|^{\aj-2}e^{\wj}
+O(\rho^{\overline\beta_1})
\end{equation*}
for some $\overline\beta_1>0$.
On the other hand, by a standard rescaling and \eqref{eq:Liouvillemass}, 
\[
\int_{B_r\cap\Aj}|x|^{\aj-2}e^{\wj}=4\pi\aj+o(1).
\]
Hence, \eqref{est:mass} follows.
\par
Proof of \eqref{est:q}.
We have:
\begin{equation*}
\begin{aligned}
\int_\Omega&(\rho e^{\Wr})^q=\sum_{j=1}^k\int_{\Aj}(\rho e^{\Wr}\chi_{\Aj})^q\\
\le&C\sum_{\stackrel{j=1}{j\ \mathrm{odd}}}^k\int_{\Aj}(\rho e^{\Wr}-|x|^{\aj-2}e^{\wj})^q
+C\sum_{\stackrel{j=1}{j\ \mathrm{odd}}}^k\int_{\Aj}|x|^{q(\aj-2)}e^{q\wj}
+C\sum_{\stackrel{j=1}{j\ \mathrm{even}}}^k\int_{\Aj}(\rho e^{\Wr})^q\\
=&C\int_\Omega|\Ep^1|^q+C\sum_{\stackrel{j=1}{j\ \mathrm{odd}}}^k\int_{\Aj}\frac{|x|^{q(\aj-2)}\dej^{q\aj}}{(\dej^{\aj}+|x|^{\aj})^{2q}}
+C\int_\Omega|\Ep^2|^q.
\end{aligned}
\end{equation*} 
By rescaling we find
\[
\int_{\Aj}\frac{|x|^{q(\aj-2)}\dej^{q\aj}}{(\dej^{\aj}+|x|^{\aj})^{2q}}=O(\dej^{2(1-q)})=O(\rho^{2s_j(1-q)})
=O(\rho^{2s_k(1-q)}).
\]
On the other hand, in view of \eqref{est:E} we have
$\|\Ep^1\|_{L^q(\Omega)}=o(1)$ and $\|\Ep^2\|_{L^q(\Omega)}=o(1)$ as $\rho\to0^+$.
Hence, the first estimate in  \eqref{est:q} is established.
The proof of the second estimate in \eqref{est:q} is similar.
\end{proof}
\section{The linear theory}
\label{sec:lineartheory}
We recall from \eqref{def:RSN} that the linear operator $\Lrho$ is defined 
for $\phi\in W^{2,p}(\Omega)$, $p>1$, by
\begin{equation}
\label{def:Lrho}
\Lrho\phi:=-\Delta\phi-\sum_{i=1}^k|x|^{\ai-2}e^{\wi}\phi
=-\Delta\phi-\sum_{i=1}^k\frac{2\ai^2\dei^{\ai}|x|^{\ai-2}}{(\dei^\ai+|x|^{\ai})^2}\phi,
\end{equation}
where $\ai,\dei$ are defined by \eqref{def:adodd}--\eqref{def:adeven} for $i=1,2,\ldots,k$.
\par
Our aim in this section is to establish the following result.
\begin{prop}
\label{prop:lineartheory}
Let $\Omega$ satisfy the symmetry assumption~\eqref{assumpt:Omega}.
For any $p>1$ there exist $\rho_0>0$ and $c>0$ such that for any $\rho\in(0,\rho_0)$
and for any $\psi\in L^p(\Omega)$ there exists a unique $\phi\in W^{2,p}(\Omega)\cap\calH$
solution to
\[
\Lrho\phi=\psi\ \mbox{in\ }\Omega,
\qquad\phi=0\ \mbox{on\ }\partial\Omega
\]
which satisfies
\[
\|\phi\|\le c|\ln\rho|\|\psi\|_p.
\]
\end{prop}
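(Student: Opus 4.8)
The plan is to reduce Proposition~\ref{prop:lineartheory} to the a priori estimate
\begin{equation*}
\|\phi\|\le c\,|\ln\rho|\,\|\Lrho\phi\|_p\qquad\text{for all }\phi\in W^{2,p}(\Omega)\cap\calH,
\end{equation*}
valid for $\rho$ small. Granting this, existence and uniqueness follow from a Fredholm argument: for fixed $\rho$ the potential $\sum_{i=1}^k|x|^{\ai-2}e^{\wi}$ is bounded on $\Omega$, so $\Lrho$ is a compact perturbation of $-\Delta$ on $H_0^1(\Omega)\cap\calH$ and has Fredholm index zero there; the a priori estimate forces the kernel to be trivial, hence $\Lrho$ is onto, and the solution operator obeys the displayed bound. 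So the whole content is the a priori estimate, which I would prove by contradiction. Suppose there are $\rho_n\to0^+$, $\phi_n\in W^{2,p}(\Omega)\cap\calH$ and $\psi_n:=\Lrho\phi_n$ with $\|\phi_n\|=1$ and $|\ln\rho_n|\,\|\psi_n\|_p=o(1)$.

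For each fixed $i\in\{1,\dots,k\}$ I would rescale around the $i$-th concentration scale, setting $\widehat\phi_{i,n}(y):=\phi_n(\dei y)$ on the expanding domain $\Omega/\dei$. Since the Dirichlet energy is conformally invariant in dimension two, $\|\nabla\widehat\phi_{i,n}\|_{L^2(\Omega/\dei)}=1$; using the scale separation $\dei=o(\de_{i+1})$ from Proposition~\ref{prop:deimain}, the scaling identity $\dei^2|\dei y|^{\ai-2}e^{\wi(\dei y)}=\frac{2\ai^2|y|^{\ai-2}}{(1+|y|^{\ai})^2}$, the smallness of $\|\psi_n\|_p$, and interior elliptic estimates, one obtains $\widehat\phi_{i,n}\to\Phi_i$ in $C^1_{\mathrm{loc}}(\mathbb R^2)$, where $\Phi_i$ is a bounded solution with $\int_{\mathbb R^2}|\nabla\Phi_i|^2<\infty$ of the linearized singular Liouville equation $-\Delta\Phi_i=\frac{2\ai^2|y|^{\ai-2}}{(1+|y|^{\ai})^2}\Phi_i$ in $\mathbb R^2$.

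The classification step is where the symmetry assumption~\eqref{assumpt:Omega} is decisive. The bounded solutions of this limit equation are spanned by the radial dilation mode $Z_0^{\ai}(y):=\frac{1-|y|^{\ai}}{1+|y|^{\ai}}$ together with, \emph{only when} $\ai/2\in\mathbb N$, the two angular modes proportional to $\frac{|y|^{\ai/2}}{1+|y|^{\ai}}\cos(\tfrac{\ai}{2}\theta)$ and $\frac{|y|^{\ai/2}}{1+|y|^{\ai}}\sin(\tfrac{\ai}{2}\theta)$. Because $\phi_n\in\calH$, the profile $\Phi_i$ is even and, when $\ga=m/n$, invariant under rotation by $2\pi/(m+n)$; Corollary~\ref{cor:aiform} gives $\tfrac{\ai}{2}\equiv\pm1\pmod{m+n}$ whenever $\ai/2\in\mathbb N$, so that rotation shifts the phase of the angular factors by $\pm2\pi/(m+n)\not\equiv0$ and the two angular modes cannot occur. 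Hence $\Phi_i=c_i Z_0^{\ai}$ for some $c_i\in\mathbb R$, and it remains to show $c_i=0$ for every $i$. For this I would test $\Lrho\phi_n=\psi_n$ against the $H_0^1(\Omega)$-projection $PZ_{i,n}$ of the rescaled dilation mode $Z_0^{\ai}(\,\cdot/\dei)$ — equivalently use a localized Pohozaev identity on a small ball about the origin — and expand both sides by means of the projection formula (Lemma~\ref{lem:proj}) and the decay estimates of Sections~\ref{sec:aidei}--\ref{sec:RrhoSrho}. The leading contribution of the boundary interaction vanishes because $0$ is a critical point of Robin's function, a consequence of~\eqref{assumpt:Omega} and~\eqref{eq:massid}; the next-order term has size of order $c_i/|\ln\rho_n|$, whereas the right-hand side is $o(1/|\ln\rho_n|)$ by hypothesis, so $c_i=o(1)$ and thus $c_i=0$. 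Once all $c_i$ vanish, $\widehat\phi_{i,n}\to0$ in $C^1_{\mathrm{loc}}$ near every scale; away from the origin $\sum_i|x|^{\ai-2}e^{\wi}\to0$ and $\phi_n=0$ on $\partial\Omega$ give $\phi_n\to0$ there by elliptic regularity, and standard weighted/Moser--Trudinger-type tail estimates upgrade this to $\sum_{i=1}^k\int_\Omega|x|^{\ai-2}e^{\wi}\phi_n^2\to0$. Testing $\Lrho\phi_n=\psi_n$ against $\phi_n$,
\begin{equation*}
1=\|\phi_n\|^2=\int_\Omega\psi_n\phi_n+\sum_{i=1}^k\int_\Omega|x|^{\ai-2}e^{\wi}\phi_n^2\le\|\psi_n\|_p\,\|\phi_n\|_{p'}+o(1)=o(1),
\end{equation*}
which is the desired contradiction.

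I expect the main obstacle to be the step $c_i=0$. The classification itself is purely arithmetic once Corollary~\ref{cor:aiform} is in hand, but ruling out the radial dilation mode requires a quantitative expansion coupling the behaviour of the projections $P\wi$ near $\partial\Omega$ with the $\ga$-dependent asymptotics of $\ai$ and $\dei^{\ai}$ from Section~\ref{sec:aidei}; since those parameters are neither monotone in $i$ nor of simple power type in $\dei^{\ai}$, this bookkeeping is more delicate than in the case $\ga=1$ of~\cite{GrossiPistoia} within the $L^p$-scheme of~\cite{EspositoGrossiPistoia,DelPinoKowalczykMusso}, and it is precisely this step that produces the $|\ln\rho|$ loss. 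A secondary point, standard but not wholly routine in two dimensions, is the passage from local decay of $\phi_n$ to the global smallness of $\sum_i\int_\Omega|x|^{\ai-2}e^{\wi}\phi_n^2$.
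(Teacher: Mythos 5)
Your overall architecture --- contradiction, rescaling at each concentration scale, classification of the limit profiles via the symmetry of $\calH$ together with Corollary~\ref{cor:aiform}, and the final energy identity --- coincides with the paper's, and the Fredholm reduction to the a priori estimate is fine. The genuine gap is in the step you yourself flag as the main obstacle: showing that the dilation-mode coefficients vanish. Testing $\Lrho\phi_n=\psi_n$ against $PZ_{i,n}$ does \emph{not} produce a term of size $c_i/|\ln\rho_n|$. Indeed, writing $Z_i=\frac{\dei^{\ai}-|x|^{\ai}}{\dei^{\ai}+|x|^{\ai}}$ one has $\int_\Omega\nabla\phi_n\cdot\nabla PZ_{i}=\int_\Omega|x|^{\ai-2}e^{\wi}\phi_n Z_i$, and since $PZ_{i}-Z_{i}=1+O(\dei^{\ai})$ the $i$-th potential term collapses to $\int_\Omega|x|^{\ai-2}e^{\wi}\phi_n\cdot 1$, whose limit after rescaling is $c_i\int_{\rr^2}\frac{2\ai^2|y|^{\ai-2}}{(1+|y|^{\ai})^2}\,\frac{1-|y|^{\ai}}{1+|y|^{\ai}}\,dy=0$ \emph{for every value of} $c_i$, by the first identity in \eqref{eq:intid}. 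So this test only controls the $\ln\rho_n$-weighted averages $\sigma_i(\rho_n)=\ln\rho_n\int_{\Omega_n^i}2\ai^2\frac{|y|^{\ai-2}}{(1+|y|^{\ai})^2}\phi_n^i$, for which one obtains the triangular relations $\sigma_i+2\sum_{j<i}\sigma_j=o(1)$ and hence $\sigma_i(\rho_n)\to0$ (Lemmas~\ref{lem:scplimit} and \ref{lem:GPexpansions}); it carries no information about $c_i$ at any order. The paper then needs a \emph{second} family of test functions, $Pw_{in}$: its expansion contains $-\ln(2\ai^2\dei^{\ai})\sim r_i|\ln\rho_n|$, which pairs with the $\sigma_j$'s, plus the terms $-2\ln(1+|y|^{\ai})$ and $-2\ai\ln|y|$ whose pairings with the dilation mode are the \emph{nonzero} constants in the second and third identities of \eqref{eq:intid}. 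Only this yields the triangular system $\eta_k=0$, $\eta_i+2\sum_{j>i}\eta_j=0$, hence all coefficients vanish. Your proposal omits this stage entirely, and without it the contradiction argument does not close.

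Two secondary points. First, the criticality of Robin's function at $0$ plays no role in the linear theory: the expansion $PZ_{i}=Z_{i}+1+O(\dei^{\ai})$ contains no Robin-function term to be cancelled, and identity~\eqref{eq:massid} belongs to the blow-up analysis of genuine solutions (Lemma~\ref{lem:blowup}), not to the invertibility of $\Lrho$; invoking it here is a red herring. Second, with $\psi_n$ only in $L^p$ the rescaled functions converge in $W^{2,p}_{\mathrm{loc}}$, hence in H\"older norms, not in $C^1_{\mathrm{loc}}$; the paper instead works with weak convergence in $H_{\ai}(\rr^2)$ and strong convergence in the weighted space $L_{\ai}(\rr^2)$, and it is precisely this compactness that also supplies the global statement $\sum_i\int_\Omega|x|^{\ai-2}e^{\wi}\phi_n^2\to0$ which you defer to unspecified ``tail estimates'' at the end.
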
 
We observe that $\Lrho$ is \textit{formally} the same operator appearing in \cite{GrossiPistoia}.
However, it actually depends significantly on the asymmetry parameter $\ga\in(0,1]$ via the parameters $\ai,\dei$.
Consequently, we can follow the approach in \cite{GrossiPistoia} to prove Proposition~\ref{prop:lineartheory},
although some intermediate estimates require a modified proof, due to the different dependence
of $\ai,\dei$ on $i,\rho$. In particular, since $\ai$ does not depend monotonically on $i$ (see Remark~\ref{rmk:ainotincreasing}),
the proof of Lemma~\ref{lem:GPexpansions}--(iv) below differs from the proof of the corresponding 
estimate~(4.18) in \cite{GrossiPistoia}.
\par
For the sale of completeness, in this section we first outline the scheme of the proof of 
Proposition~\ref{prop:lineartheory}, which is analogous to \cite{GrossiPistoia}.
We then devote the remaining part of this section to prove in detail Lemma~\ref{lem:GPexpansions}--(iv).
\subsection{Outline of the proof of Proposition~\ref{prop:lineartheory}}
It is convenient to extend the symmetry assumption~\eqref{assumpt:Omega}
to a possibly unbounded domain $D\subset\rr^2$.
Let $D\subset\rr^2$ be a smooth (possibly unbounded) domain.
Namely, we define the following geometrical symmetry property for $D$:
\begin{equation}
\label{def:Dsymm}
\begin{aligned}
0\in D\ \mbox{and\ }
&\begin{cases}
-D=D=e^{2\pi\sqrt{-1}/(m+n)}D,
&\mbox{if\ }\ga=\frac{m}{n},\ m,n\in\mathbb N\ \mbox{coprime};\\
-D=D,
&\mbox{if\ }\ga\not\in\mathbb Q,
\end{cases}
\end{aligned}
\end{equation}
\par
Correspondingly, we define a symmetry property for functions $\phi:D\to\mathbb R$: 
\begin{equation}
\label{def:phisymm}
\begin{cases}
\varphi(xe^{2\pi\sqrt{-1}/(m+n)})=\varphi(x)=\varphi(-x)\,\forall x\in D,
&\mbox{if\ }\ga=\frac{m}{n},\ m,n\in\mathbb N\ \mbox{coprime}\\
\varphi(-x)=\varphi(x)\,\forall x\in D,
&\mbox{if\ }\ga\not\in\mathbb Q.
\end{cases}
\end{equation}
The following lemma clarifies the role of the symmetry assumption \eqref{def:phisymm}.
\begin{lemma}
\label{lem:symm}
Suppose $\al\ge2$ is such that $\frac{\al}{2}\in\mathbb N$ and
\[
\frac{\al}{2}=(1+\frac{1}{\ga})i-\frac{1}{\ga}\qquad\qquad\mbox{for some odd $i\in\mathbb N$}
\]
or
\[
\frac{\al}{2}=(1+\ga)i-1\qquad\qquad\mbox{for some even $i\in\mathbb N$}.
\]
Suppose $\phi$ is a solution to 
\begin{equation}
\label{eq:phijo}
-\Delta\phi=2\al^2\frac{|y|^{\al-2}}{(1+|y|^\al)^2}\phi\qquad\mbox{in\ }\rr^2,
\qquad\qquad\int_{\rr^2}|\nabla\phi|^2<+\infty
\end{equation}
satisfying \eqref{def:phisymm} with $D=\rr^2$.
Then, there exists $\eta\in\rr$ such that
\begin{equation}
\label{eq:phirr}
\phi(y)=\eta\frac{1-|y|^\al}{1+|y|^\al}.
\end{equation}
\end{lemma}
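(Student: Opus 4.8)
The plan is to analyze the linearized Liouville equation \eqref{eq:phijo} by separation of variables in polar coordinates $y = r e^{i\theta}$, exploiting the known structure of the kernel of the linearization around a standard singular Liouville bubble. First I would recall that for the singular Liouville bubble $w_1^\al$, the function $z_0(y) := \frac{1-|y|^\al}{1+|y|^\al}$ is a radial solution of \eqref{eq:phijo} with finite Dirichlet energy; this is a direct computation (it comes from the scaling invariance $\de \mapsto w_\de^\al$ of the family). The goal is then to show that, \emph{under the symmetry constraint} \eqref{def:phisymm}, this radial solution is the only one up to scalar multiples.

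The key steps, in order: (1) Expand any finite-energy solution $\phi$ in a Fourier series in the angular variable, $\phi(r,\theta) = \sum_{j \in \mathbb Z} \phi_j(r) e^{ij\theta}$ (or equivalently in $\cos(j\theta)$, $\sin(j\theta)$). Each mode $\phi_j$ solves an ODE of the form $-\phi_j'' - \frac1r \phi_j' + \frac{j^2}{r^2}\phi_j = 2\al^2 \frac{r^{\al-2}}{(1+r^\al)^2}\phi_j$ on $(0,\infty)$, with the finite-energy condition forcing $\phi_j$ to be controlled at $0$ and $\infty$. (2) For the radial mode $j=0$: the space of solutions controlled at the origin is one-dimensional (spanned by $z_0$), using the explicit form of $z_0$ together with an analysis of the second, logarithmically growing solution via reduction of order, which is excluded by finite energy. (3) For the modes $j \neq 0$: show that any finite-energy solution of the $j$-th mode ODE must vanish identically \emph{unless} $j$ is a specific multiple of $\al/2$, in which case the admissible solution behaves like $r^{|j|}$ near $0$ and like $r^{-|j|}$ near $\infty$ — the classical fact is that nontrivial bounded-energy solutions occur only for $j = \pm \al/2$ (giving the translation-type kernel element, roughly $\frac{r^{\al/2}\cos(\tfrac\al2\theta)}{1+r^\al}$ up to normalization), and for $|j| > \al/2$ or $0 < |j| < \al/2$ the only finite-energy solution is zero. (4) Finally, invoke the symmetry hypothesis: for this to admit a nonzero mode $j = \pm\al/2$, the function $e^{i(\al/2)\theta}$ would have to be invariant under the rotation $\theta \mapsto \theta + 2\pi/(m+n)$ (when $\ga = m/n$) and under $\theta \mapsto \theta + \pi$ (the $x \mapsto -x$ symmetry); using Corollary~\ref{cor:aiform}, which says $\al/2 = (m+n)\kj \pm 1$ when $\al/2 \in \mathbb N$, one checks $e^{i(\al/2)(2\pi/(m+n))} \neq 1$ (since $\al/2 \equiv \pm 1 \not\equiv 0 \pmod{m+n}$, as $m+n \ge 3$), so the symmetric functions cannot see the mode $\pm\al/2$. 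This kills the mode-$\al/2$ kernel elements and leaves only $z_0$, giving \eqref{eq:phirr}.

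The main obstacle I expect is step (3), the ODE analysis of the higher Fourier modes: one must rule out nontrivial finite-energy solutions for \emph{all} $j \neq 0, \pm\al/2$, which requires either explicit hypergeometric-type solutions of the mode equations (the substitution $t = |y|^\al/(1+|y|^\al)$ or $s = r^\al$ linearizes the equation to a Gauss hypergeometric / Legendre-type ODE) or a careful Sturm-type/energy argument on $(0,\infty)$ with the correct boundary behavior at both ends. A clean way to organize it is to change variables so the mode equation becomes a standard ODE whose solutions are known explicitly, then match the integrability/energy conditions at $r=0$ and $r=\infty$; this is essentially the computation underlying the nondegeneracy results for singular Liouville equations (e.g.\ as in \cite{PrajapatTarantello} and the linearization analysis used in \cite{EspositoGrossiPistoia, GrossiPistoia}), so I would cite those and reproduce only the part specific to the general exponent $\al$. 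A secondary subtlety is making the finite-energy condition $\int_{\rr^2}|\nabla\phi|^2 < \infty$ do its job at the singular point $y=0$ (where $r^{\al-2}$ may be singular if $\al < 2$, but here $\al \ge 2$) and at infinity; one records that finite energy forces $\phi_j(r) = O(r^{|j|})$ as $r \to 0^+$ and $\phi_j(r) = O(r^{-|j|})$ as $r \to \infty$ for $j \neq 0$, and $\phi_0$ bounded at both ends, which pins down the relevant one-dimensional solution spaces.
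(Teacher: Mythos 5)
Your proposal is correct and follows essentially the same route as the paper: identify the (three-dimensional) bounded/finite-energy kernel of the linearized singular Liouville operator, spanned by the radial element $\frac{1-|y|^\al}{1+|y|^\al}$ and the two angular elements $\frac{|y|^{\al/2}}{1+|y|^\al}\cos\frac{\al}{2}\theta$, $\frac{|y|^{\al/2}}{1+|y|^\al}\sin\frac{\al}{2}\theta$, and then kill the angular modes via Corollary~\ref{cor:aiform}, since $\al/2\equiv\pm1\not\equiv0\pmod{m+n}$ makes them incompatible with the rotational symmetry in \eqref{def:phisymm}. The only difference is that the paper does not redo the Fourier-mode ODE analysis you sketch: it first cites \cite{GrossiPistoia} for the fact that finite Dirichlet energy implies boundedness (a point worth keeping explicit, since finite energy alone does not directly control all modes at infinity) and then cites \cite{DelPinoEspositoMusso2012} for the classification of bounded solutions.
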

\begin{proof}
It is shown in \cite{GrossiPistoia} that $\phi$ is necessarily a bounded solution.
In turn, it is shown in \cite{DelPinoEspositoMusso2012} that any bounded solution to
\eqref{eq:phirr} is a linear combination of the functions:
\[
\phi_0(y)=\frac{1-|y|^\al}{1+|y|^\al},
\qquad
\phi_1(y)=\frac{|y|^{\frac{\al}{2}}}{1+|y|^\al}\cos\frac{\al}{2}\theta,
\qquad
\phi_2(y)=\frac{|y|^{\frac{\al}{2}}}{1+|y|^\al}\sin\frac{\al}{2}\theta.
\]
In view of Corollary~\ref{cor:aiform},
$\frac{\al}{2}$ is of the form \eqref{eq:aiform}. In particular, the functions $\phi_1,\phi_2$
do not satisfy \eqref{def:phisymm}.
The claim follows.
\end{proof}
In order to prove Proposition~\ref{prop:lineartheory},
following \cite{GrossiPistoia}, we argue by contradiction and we assume that
there exist $p>1$, $\rho_n\to0^+$, $\phi_n$, $\psi_n$
such that $\mathcal L_{\rho_n}\phi_n=\psi_n$,
$\|\phi_n\|=1$,
$|\ln\rho_n|\|\psi_n\|_p\to0$.
In particular, $\phi_n$ satisfies
\begin{equation}
\label{eq:phin}
-\Delta\phi_n-\sum_{i=1}^k2\ai^2\frac{\de_{in}^\ai|x|^{\ai-2}}{(\de_{in}^{\ai}+|x|^{\ai})^2}\phi_n=\psi_n.
\end{equation}
Then, by \eqref{eq:deltadecay1} we obtain $k$ sequences of scaling parameters:
\[
\de_{jn}=d_j\rho_n^{s_j},\qquad j=1,2,\ldots,k.
\]
For every fixed $j=1,2,\ldots,k$ we set 
\[
\phi_n^j(y)=\phi_n(\de_{jn}y),\qquad y\in\Omega_n^j=\Omega/\de_{jn}.
\]
For any $\al\ge 2$ we define the Banach spaces
\[
L_\al(\rr^2):=\left\{u\in L_{\mathrm{loc}}^2(\rr^2):\left\|\frac{|y|^{\frac{\al-2}{2}}}{1+|y|^\al}u\right\|_{L^2(\rr^2)}<+\infty\right\}
\]
and
\[
H_\al(\rr^2):=\left\{u\in W_{\mathrm{loc}}^{1,2}(\rr^2):\|\nabla u\|_{L^2(\rr^2)}
+\left\|\frac{|y|^{\frac{\al-2}{2}}}{1+|y|^\al}u\right\|_{L^2(\rr^2)}<+\infty\right\}
\]
endowed with the norms
\[
\begin{aligned}
\|u\|_{L_\al}:=&\left\|\frac{|y|^{\frac{\al-2}{2}}}{1+|y|^\al}u\right\|_{L^2(\rr^2)}\\
\|u\|_{H_\al}:=&\left(\|\nabla u\|_{L^2(\rr^2)}
+\left\|\frac{|y|^{\frac{\al-2}{2}}}{1+|y|^\al}u\right\|_{L^2(\rr^2)}\right)^{1/2}.
\end{aligned}
\]
With these definitions, it is shown in \cite{GrossiPistoia} that the embedding $i_\al:H_\al(\rr^2)\hookrightarrow L_\al(\rr^2)$ is compact.
\begin{lemma}
\label{lem:etaj}
For every $j=1,2,\ldots,k$ there exist $\eta_j\in\rr$ such that
\begin{equation}
\label{eq:phirr}
\phij_n(y)\to\phi_0^j(y)=\eta_j\frac{1-|y|^{\aj}}{1+|y|^{\aj}}
\end{equation}
weakly in $H_{\aj}(\mathbb R^2)$ and strongly in $L_\aj(\mathbb R^2)$.
\end{lemma}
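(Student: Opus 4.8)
The plan is to carry out the standard blow-up analysis for the linearized operator, tracking the $k$ distinct concentration scales. Fix $j\in\{1,\dots,k\}$ and set $\phi_n^j(y)=\phi_n(\de_{jn}y)$ on $\Omj=\Omega/\de_{jn}$; by scale invariance of the Dirichlet integral in dimension two one has $\|\nabla\phi_n^j\|_{L^2(\Omj)}=\|\nabla\phi_n\|_{L^2(\Omega)}=1$, and from \eqref{eq:phin} the rescaled function solves
\[
-\Delta\phi_n^j=2\aj^2\frac{|y|^{\aj-2}}{(1+|y|^{\aj})^2}\phi_n^j+\sum_{i\neq j}V_{in}^j(y)\,\phi_n^j+\widetilde\psi_n^j\quad\text{in }\Omj,
\]
where $V_{in}^j(y)=2\ai^2\de_{in}^{\ai}\de_{jn}^{\ai}|y|^{\ai-2}(\de_{in}^{\ai}+\de_{jn}^{\ai}|y|^{\ai})^{-2}$ and $\widetilde\psi_n^j(y)=\de_{jn}^2\psi_n(\de_{jn}y)$, so that $\|\widetilde\psi_n^j\|_{L^p(\Omj)}=\de_{jn}^{2-2/p}\|\psi_n\|_p\to0$. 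First I would establish, as in \cite{GrossiPistoia}, the uniform bound $\|\phi_n^j\|_{H_{\aj}(\rr^2)}\le C$ (extending $\phi_n^j$ by zero outside $\Omj$), which uses the scale invariance above together with the weighted inequality behind the compactness of $i_{\aj}:H_{\aj}(\rr^2)\hookrightarrow L_{\aj}(\rr^2)$ recalled before the statement. Since there are only finitely many indices, I would then pass to a common subsequence along which $\phi_n^j\rightharpoonup\phi^j$ weakly in $H_{\aj}(\rr^2)$ and, by that compactness, strongly in $L_{\aj}(\rr^2)$, for every $j$.

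Next I would pass to the limit in the weak formulation against $\varphi\in C_c^\infty(\rr^2)$, which is eventually supported in $\Omj$ since $\de_{jn}\to0$. The forcing term vanishes by the above decay of $\|\widetilde\psi_n^j\|_p$. For each $i\neq j$, the power rates in \eqref{eq:deltadecay1} give $\de_{in}/\de_{jn}\to0$ when $i<j$ and $\de_{in}/\de_{jn}\to+\infty$ when $i>j$; hence $V_{in}^j\to0$ uniformly on $\operatorname{supp}\varphi$, and pairing this with the $L_{\aj}$-bound on $\phi_n^j$ yields $\int V_{in}^j\phi_n^j\varphi\to0$. The remaining potential term passes to the limit by the strong $L_{\aj}$-convergence, after writing $2\aj^2\frac{|y|^{\aj-2}}{(1+|y|^{\aj})^2}\phi_n^j\varphi$ as the product of the weighted sequence $\frac{|y|^{(\aj-2)/2}}{1+|y|^{\aj}}\phi_n^j$, convergent in $L^2(\rr^2)$, and the fixed $L^2$ factor $\frac{|y|^{(\aj-2)/2}}{1+|y|^{\aj}}\varphi$. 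After a density argument (using that $C_c^\infty(\rr^2)$ is dense in $H_{\aj}(\rr^2)$) this shows $\phi^j\in H_{\aj}(\rr^2)$ is a finite-energy solution of $-\Delta\phi^j=2\aj^2|y|^{\aj-2}(1+|y|^{\aj})^{-2}\phi^j$ on $\rr^2$; moreover, since multiplication by $\de_{jn}$ commutes with the rotation and the reflection, $\phi^j$ inherits the symmetry \eqref{def:phisymm} with $D=\rr^2$ from $\phi_n\in\calH$.

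Finally I would invoke the classification of such solutions. When $\aj/2\in\mathbb{N}$, Corollary~\ref{cor:aiform} shows $\aj/2$ has the form \eqref{eq:aiform}, so Lemma~\ref{lem:symm} forces $\phi^j(y)=\eta_j\,(1-|y|^{\aj})/(1+|y|^{\aj})$ for some $\eta_j\in\rr$; when $\aj/2\notin\mathbb{N}$ the angular solutions involving $\cos(\tfrac{\aj}{2}\theta)$ and $\sin(\tfrac{\aj}{2}\theta)$ are not $2\pi$-periodic, hence the only finite-energy solution on $\rr^2$ is again a multiple of $(1-|y|^{\aj})/(1+|y|^{\aj})$; in both cases this is the asserted convergence. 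I expect the main obstacle to be the cross terms $\int V_{in}^j\phi_n^j\varphi$: $V_{in}^j$ carries a fixed mass concentrating on the sphere $|y|=(\de_{in}/\de_{jn})^{1/\ai}$, and one must use the precise exponents of Proposition~\ref{prop:deimain} to be sure this concentration radius leaves every compact set, so that the weak control of $\phi_n^j$ supplied by the weighted Hardy inequality — rather than strong compactness — suffices to kill these terms; a secondary point is the a priori bound $\|\phi_n^j\|_{H_{\aj}}\le C$, which in dimension two is not purely Sobolev and must be extracted by testing the rescaled equation and absorbing as in \cite{GrossiPistoia}.
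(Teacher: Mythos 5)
Your overall scheme (rescale at each $\dej$, extract a limit weakly in $H_{\aj}$ and strongly in $L_{\aj}$ via the compact embedding, pass to the limit in the equation, then classify through Lemma~\ref{lem:symm} and Corollary~\ref{cor:aiform}, handling $\aj/2\notin\mathbb N$ by single-valuedness) is the same as the paper's, and most steps are sound. There is, however, a genuine gap in how you dispose of the cross terms with $i<j$. Writing $\eps_n:=\de_{in}/\de_{jn}\to0$ for $i<j$, the potential
\[
V_{in}^j(y)=2\ai^2\frac{\eps_n^{\ai}\,|y|^{\ai-2}}{(\eps_n^{\ai}+|y|^{\ai})^2}=|y|^{\ai-2}e^{w_{\eps_n}^{\ai}(y)}
\]
does \emph{not} tend to zero uniformly on $\operatorname{supp}\varphi$: its maximum is of order $\eps_n^{-2}\to+\infty$, attained at $|y|\sim\eps_n$ (note the concentration radius is $\de_{in}/\de_{jn}$, not $(\de_{in}/\de_{jn})^{1/\ai}$), and by \eqref{eq:Liouvillemass} it carries the fixed mass $4\pi\ai$ concentrating at the \emph{origin} — which does not leave compact sets. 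Your uniform-convergence mechanism only works for $i>j$, where the concentration radius escapes to infinity. For $i<j$ the term $\int V_{in}^j\phi_n^j\varphi$ is merely bounded (Cauchy--Schwarz at the $i$-th scale against $\|\phi_n^i\|_{L_{\ai}}$), so a priori your limit equation could contain an extra multiple of $\delta_0$.

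Two standard repairs. (a) Rescale the offending integral to the $i$-th scale: since $\phi_n^j(\eps_n z)=\phi_n^i(z)$,
\[
\int_{\rr^2}V_{in}^j\,\phi_n^j\,\varphi\,dy=\int_{\rr^2}2\ai^2\frac{|z|^{\ai-2}}{(1+|z|^{\ai})^2}\,\phi_n^i(z)\,\varphi(\eps_n z)\,dz
\longrightarrow\varphi(0)\int_{\rr^2}2\ai^2\frac{|z|^{\ai-2}}{(1+|z|^{\ai})^2}\,\phi_0^i(z)\,dz,
\]
and argue by induction on $j$: for $j=1$ there are no such terms, and once $\phi_0^i=\eta_i\frac{1-|z|^{\ai}}{1+|z|^{\ai}}$ is known for all $i<j$, the limit vanishes by the first orthogonality identity in \eqref{eq:intid}. (b) Alternatively, observe that the spurious limit contribution is necessarily a multiple of $\delta_0$, which is excluded because $\phi_0^j\in H^1_{\mathrm{loc}}(\rr^2)$ forces $\Delta\phi_0^j\in H^{-1}_{\mathrm{loc}}$ while $\delta_0\notin H^{-1}_{\mathrm{loc}}(\rr^2)$ in dimension two (equivalently, points have zero $H^1$-capacity, so verifying the equation on $\rr^2\setminus\{0\}$ — where your argument does apply — suffices). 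With either fix the remainder of your proof goes through.
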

\begin{proof}
Fix $j$. There exists $\phij_0\in H_{\aj}$ such that
$\phij_n\to\phi_0$ weakly in $H_{\aj}(\mathbb R^2)$ and strongly in $L_\aj(\mathbb R^2)$.
The function $\phij_0$ satisfies \eqref{eq:phijo} with $\al=\aj$.
Moreover, for every $n$, $\Omega_n^j$ satisfies the symmetry assumption~\eqref{def:Dsymm}
and $\phij_n$ satisfies \eqref{def:phisymm} in $\Omega_n^j$.
Finally, $\aj$ is of the form \eqref{def:a}.
In view of Lemma~\ref{lem:symm} we conclude that $\phij_0$ is of the asserted form~\eqref{eq:phirr}
with $\al=\aj$.
\end{proof}
The desired contradiction will follow from the fact that, actually, $\phi_0^i=0$ for all $i=1,2,\ldots,k$.
Indeed, the following result holds true.
\begin{prop}
\label{prop:gajzero}
For any $j=1,2,\ldots,k$, there holds $\eta_j=0$.
Therefore, we have $\phij_n(y)\to0$ weakly in $H_{\aj}(\mathbb R^2)$ and strongly in $L_\aj(\mathbb R^2)$
for all $j=1,2,\ldots,k$.
\end{prop}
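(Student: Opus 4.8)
To prove that $\eta_j=0$ for every $j$, the plan is to follow the scheme of \cite{GrossiPistoia}. The geometric symmetry has already discarded the non-radial part of the kernel via Lemma~\ref{lem:symm}, so only the radial dilation mode $\eta_j(1-|y|^{\aj})/(1+|y|^{\aj})$ survives, and I would remove it by combining test-function identities for \eqref{eq:phin}, asymptotic expansions of $\phi_n$ across the transition annuli $\Aj$, and the boundary condition $\phi_n=0$ on $\partial\Omega$.

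First I would extract from \eqref{eq:phin}, by testing against the projected bubbles $Pw_{\de_{jn}}^{\aj}$ and integrating by parts (using $-\Delta Pw_{\de_{jn}}^{\aj}=|x|^{\aj-2}e^{w_{\de_{jn}}^{\aj}}$), a family of identities for the ``bubble masses'' $m_{jn}:=\int_\Omega|x|^{\aj-2}e^{w_{\de_{jn}}^{\aj}}\phi_n$. The $\psi_n$-contribution is $O(\|\psi_n\|_p)=o(|\ln\rho_n|^{-1})$ (since $\|Pw_{\de_{jn}}^{\aj}\|_{p'}=O(1)$); using $Pw_{\de_{jn}}^{\aj}(\de_{jn}y)=2\aj s_j|\ln\rho_n|-2\ln(1+|y|^{\aj})+O(1)$ and the strong $L_{\aj}$-convergence $\phij_n\to\eta_j(1-|y|^{\aj})/(1+|y|^{\aj})$, the self-interaction term becomes $2\aj s_j|\ln\rho_n|\,m_{jn}+4\pi\aj\eta_j+o(1)$, and the interactions with $l\neq j$ are treated similarly. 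Since $(1-|y|^{\aj})/(1+|y|^{\aj})$ is balanced against the weight, i.e. $\int_{\rr^2}|y|^{\aj-2}(1-|y|^{\aj})(1+|y|^{\aj})^{-3}\,dy=0$, each $m_{jn}\to0$, so these identities reduce to a finite linear system relating the $\eta_j$'s to the renormalized limits $\mu_j:=\lim_n|\ln\rho_n|\,m_{jn}$.

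Next I would bring in the radial averages $\bar\phi_n(r)$ via $r\bar\phi_n'(r)=-\tfrac1{2\pi}\int_{B_r}(-\Delta\phi_n)$. For $r$ in a neck $\{R\de_{jn}\le|x|\le\de_{j+1,n}/R\}$ the integral $\int_{B_r}(-\Delta\phi_n)$ equals $\sum_{l\le j}m_{ln}$ up to the $\psi_n$-term and up to bubble tails that decay in $r$ as powers of $\de_{jn}/r$ and of $r/\de_{j+1,n}$; integrating in $r$, the tails contribute $O(R^{-c})$ (no logarithm, precisely because they decay in $r$), the $\psi_n$-term contributes $o(1)$, and $\sum_{l\le j}m_{ln}$ contributes $(\qj|\ln\rho_n|+O(1))\sum_{l\le j}m_{ln}\to\qj\sum_{l\le j}\mu_l$. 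Matching the endpoint values ($\bar\phi_n\to-\eta_j$ on the inner edge of $\Aj$, $\bar\phi_n\to+\eta_{j+1}$ on the outer edge, and $\bar\phi_n\to0$ at a fixed radius for the neck adjacent to $\partial\Omega$) and letting $R\to\infty$ gives further relations, namely $\eta_j+\eta_{j+1}=-\tfrac{\qj}{2\pi}\sum_{l\le j}\mu_l$ for $j<k$ and $\eta_k=-\tfrac{s_k}{2\pi}\sum_{l\le k}\mu_l$; the value $0$ at a fixed radius is legitimate because, $\|\phi_n\|=1$ being bounded, the weak $H^1$-limit of $\phi_n$ is harmonic in $\Omega\setminus\{0\}$, lies in $H^1(\Omega)$ (so the singularity at $0$ is removable) and vanishes on $\partial\Omega$, hence is $\equiv0$, whence $\phi_n\to0$ in $C^1_{\mathrm{loc}}(\Omega\setminus\{0\})$. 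Solving the resulting homogeneous linear system with its explicit coefficients then forces $\eta_j=\mu_j=0$ for all $j$, i.e. $\phij_n\to0$ weakly in $H_{\aj}(\rr^2)$ and strongly in $L_{\aj}(\rr^2)$.

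The hard part will be the transition-region analysis underlying the two previous steps: one must show that the remainders in the expansions of $Pw_{\de_{jn}}^{\aj}$ and in $\int_{B_r}(-\Delta\phi_n)$ stay negligible even after multiplication by the logarithmic width $\sim|\ln\rho_n|$ of the necks. Since the exponents $\aj$ are not monotone in $j$ (Remark~\ref{rmk:ainotincreasing}), the relative decay of the tails of $w_{\de_{jn}}^{\aj}$ and $w_{\de_{j+1,n}}^{\al_{j+1}}$ inside $\Aj$ is no longer governed by $j$ alone, so this estimate does not follow as in \cite{GrossiPistoia}; establishing it is precisely the content of Lemma~\ref{lem:GPexpansions}(iv), whose proof occupies the remainder of the section and departs from the corresponding estimate~(4.18) of \cite{GrossiPistoia}.
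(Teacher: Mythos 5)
Your first step — testing \eqref{eq:phin} against the projected bubbles $Pw_{\de_{in}}^{\ai}$ and passing to the limit via the integral identities for the radial kernel — is exactly the paper's first half. The gap is in your second step: the neck relations obtained by integrating $r\bar\phi_n'(r)=-\tfrac1{2\pi}\int_{B_r}(-\Delta\phi_n)$ are \emph{not} independent of the $Pw$-identities, so the ``homogeneous linear system'' you propose to solve is degenerate and cannot force $\eta_j=\mu_j=0$. Both families of identities come from pairing $-\Delta\phi_n=V_n\phi_n+\psi_n$ with piecewise-logarithmic test functions ($Pw_{\de_{in}}^{\ai}=-2\ln(\de_{in}^{\ai}+|x|^{\ai})+O(1)$ on one hand, and, after Fubini, $\ln\bigl(b/\max(a,|x|)\bigr)$ on the other), so they carry the same information. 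Concretely, for $k=1$ one has $\al_1=2$, $\de_1^{\al_1}=c_1\rho$, $s_1=1/2$: the $Pw$-test reads $0=-2\sigma_{10}+4\pi\al_1\eta_1$, i.e.\ $\eta_1=\sigma_{10}/(4\pi)$ (with $\sigma_{10}=\lim\ln\rho_n\, m_{1n}$), while your single neck relation reads $\eta_1=\tfrac{s_1}{2\pi}\sigma_{10}=\sigma_{10}/(4\pi)$ — the \emph{same} equation, so the family $(\eta_1,\sigma_{10})=t(1,4\pi)$ is not excluded. The same happens for every $k$: using $r_j=\aj s_j$ and $\qj=s_j-s_{j+1}$ one checks (e.g.\ explicitly for $k=2$) that each neck relation is a linear combination of the $Pw$-relations, so the combined system has rank $k$ in the $2k$ unknowns $(\eta_j,\mu_j)$.

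The missing $k$ independent relations are supplied in the paper by testing against the projected \emph{dilation} modes $P\Zi$, $\Zi=(\dei^{\ai}-|x|^{\ai})/(\dei^{\ai}+|x|^{\ai})$. Since $P\Zi=\Zi+1+O(\dei^{\ai})$ is $\approx 2$ inside the $i$-th bubble and $\approx 0$ outside, this is a step-function (mass) test rather than a logarithmic one, and it yields the triangular system $\scpi+2\sum_{j<i}\scpj=o(1)$ of Lemma~\ref{lem:scplimit}, hence $\scp_{i0}=0$ for all $i$; only then do the $Pw$-identities collapse to $\eta_k=0$ and $\eta_i+2\sum_{j>i}\eta_j=0$, giving $\eta_i=0$ by downward induction. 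Note also that Lemma~\ref{lem:GPexpansions}--(iv), which you correctly single out as the hard estimate, belongs to this $P\Zi$ test (the term $\ln\rho\int V_{jn}\phi_nP\Zi$ for $j<i$), not to the neck analysis you describe; and in any case your tail bounds require $R$ to grow as a negative power of $\rho$ (as with $\Rj=\sqrt{\de_{j+1}/\dej}$ in the paper) rather than a fixed constant, because the H\"older loss $\dej^{-2(r-1)/r}$ must be absorbed. To repair the proof you must add the $P\Zi$ (or an equivalent mass/Pohozaev-type) family of tests.
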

The proof of Proposition~\ref{prop:gajzero} will be outlined below. Once Proposition~\ref{prop:gajzero} is established,
it is simple to prove Proposition~\ref{prop:lineartheory}.
\begin{proof}[Proof of Proposition~\ref{prop:lineartheory}]
Multiplying \eqref{eq:phin} by $\phi_n$ and integrating, we find
\begin{equation*}
\begin{aligned}
1=&\int_\Omega|\nabla\phi_n|^2=\sum_{i=1}^k\int_\Omega2\ai^2
\frac{\de_{in}^{\ai}|x|^{\ai-2}}{(\de_{in}^{\ai}+|x|^{\ai})^2}\phi_n^2(x)\,dx
+\int_\Omega\psi_n(x)\phi_n(x)\,dx\\
=&\sum_{i=1}^k\int_{\Omega_n^i}2\ai^2
\frac{|y|^{\ai-2}}{(1+|y|^{\ai})^2}(\phi_n^i(y))^2(x)\,dy+O(\|\psi_n\|_p\|\phi_n\|)
=o(1),
\end{aligned}
\end{equation*}
because $\phi_n^i\to0$ strongly in $L_{\ai}(\rr^2)$, for all $i=1,2,\ldots,k$,
and $\|\psi_n\|_p=o(1)$.
This is a contradiction.
\end{proof}
In order to prove Proposition~\ref{prop:gajzero}, for $i=1,2,\ldots,k$ we define the quantities
\begin{equation*}
\scpi(\rho_n)=\ln\rho_n\int_{\Omj}2\ai^2\frac{|y|^{\ai-2}}{(1+|y|^{\ai})^2}\phi_n^i(y)\,dy.
\end{equation*}
\begin{lemma}
\label{lem:scplimit}
For any $i=1,2,\ldots,k$,
For any $i=1,2,\ldots,k$ there holds
\begin{equation*}
\scp_1(\rho_n)=o(1),
\qquad
\scpi(\rho_n)+2\sum_{j=1}^{i-1}\scpj(\rho_n)=o(1)
\end{equation*}
and consequently
\begin{equation}
\label{eq:scplimit}
\scp_{i0}:=\lim_{\rho_n\to0}\scpi(\rho_n)=0.
\end{equation}
\end{lemma}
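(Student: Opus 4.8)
The plan is to prove the two displayed identities by induction on $i=1,\dots,k$ (for $i=1$ the second reduces to the first, the sum being empty); once they hold, a short induction gives $\sigma_i(\rho_n)=-2\sum_{j=1}^{i-1}\sigma_j(\rho_n)+o(1)=o(1)$ for all $i$, hence $\sigma_{i0}=\lim_{\rho_n\to0}\sigma_i(\rho_n)=0$. Write $V_{in}(x):=2\ai^2\de_{in}^{\ai}|x|^{\ai-2}(\de_{in}^{\ai}+|x|^{\ai})^{-2}$, so that \eqref{eq:phin} reads $-\Delta\phi_n=\sum_{\ell=1}^{k}V_{\ell n}\phi_n+\psi_n$ and $\sigma_i(\rho_n)=\ln\rho_n\int_\Omega V_{in}\phi_n$. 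I would test \eqref{eq:phin} against the $H_0^1(\Omega)$-projection $PZ_i$ of the dilation mode
\[
Z_i(x):=\ai\,\frac{|x|^{\ai}-\de_{in}^{\ai}}{|x|^{\ai}+\de_{in}^{\ai}},
\]
which is (up to a constant) the derivative of the bubble with respect to its concentration parameter and solves $-\Delta Z_i=V_{in}Z_i$ in $\rr^2$. Since $-\Delta(PZ_i)=-\Delta Z_i$ in $\Omega$, Green's identity gives
\[
\int_\Omega V_{in}Z_i\,\phi_n=\sum_{\ell=1}^{k}\int_\Omega V_{\ell n}\,\phi_n\,PZ_i+\int_\Omega\psi_n\,PZ_i .
\]

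Next I would isolate $\sigma_i$. Let $\varphi_i:=Z_i-PZ_i$: it is harmonic in $\Omega$ with boundary value $Z_i|_{\partial\Omega}=\ai+O(\de_{in}^{\ai})$, hence $\|\varphi_i-\ai\|_{L^\infty(\Omega)}=O(\de_{in}^{\ai})$. Moving the $\ell=i$ term to the left-hand side yields the \emph{exact} relation
\[
\int_\Omega V_{in}\varphi_i\,\phi_n=\ai\int_\Omega V_{in}\phi_n+O(\de_{in}^{\ai})=\ai\,\frac{\sigma_i(\rho_n)}{\ln\rho_n}+O(\rho_n^{r_i}),
\]
using the uniform bound $\|\phi_n\|_{L^\infty(\Omega)}\le C$ (standard elliptic regularity, as $\|\phi_n\|=1$), $\int_\Omega V_{in}\le4\pi\ai$, the decay rate $\de_{in}^{\ai}=c_i\rho_n^{r_i}$ (Proposition~\ref{prop:deimain}), and the definition of $\sigma_i$. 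For the terms $\ell\neq i$ I would use that on the concentration scale $\de_{\ell n}$ of $V_{\ell n}$ one has $PZ_i=-2\ai+O(|x|^{\ai}\de_{in}^{-\ai})+O(\de_{in}^{\ai})$ when $\ell<i$ (because $Z_i\to-\ai$ on scales $\ll\de_{in}$ while $\varphi_i\to\ai$ — this is where the factor $2$ originates) and $PZ_i=O((\de_{in}/|x|)^{\ai})+O(\de_{in}^{\ai})$ when $\ell>i$ (because $Z_i\to\ai$ on scales $\gg\de_{in}$). Combining with the power-law scale separation $\de_{jn}=o(\de_{j+1,n})$ of Proposition~\ref{prop:deimain} and the estimates of Section~\ref{sec:Thetaj} — splitting, where needed, according to whether $\ai<\al_\ell$ or $\ai\ge\al_\ell$ — gives $\int_\Omega V_{\ell n}\phi_n\,PZ_i=-2\ai\,\sigma_\ell(\rho_n)/\ln\rho_n+O(\rho_n^{c})$ for $\ell<i$ and $=O(\rho_n^{c})$ for $\ell>i$, with some $c>0$; moreover $|\int_\Omega\psi_n\,PZ_i|\le\|PZ_i\|_{L^\infty}\|\psi_n\|_{L^1(\Omega)}\le C\|\psi_n\|_p=o(1/|\ln\rho_n|)$ by the contradiction hypothesis $|\ln\rho_n|\,\|\psi_n\|_p\to0$, since $PZ_i$ is uniformly bounded. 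Substituting and multiplying through by $\ln\rho_n$ (which absorbs the $\ln\rho_n\cdot O(\rho_n^{c})$ remainders) gives $\ai\,\sigma_i(\rho_n)=-2\ai\sum_{\ell<i}\sigma_\ell(\rho_n)+o(1)$, i.e.\ the claimed recursion since $\ai\ge2$; for $i=1$ the sum is empty and this is $\sigma_1(\rho_n)=o(1)$.

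The crux is that $PZ_i$ is a \emph{degenerate} test function: the leading, $O(1)$, part of its pairing with $\phi_n$ — the part that would feel the limit profile $\phi_0^i=\eta_i\frac{1-|y|^{\ai}}{1+|y|^{\ai}}$ of Lemma~\ref{lem:etaj} — cancels identically, reflecting that $Z_i$ belongs to the kernel of the limiting linearized operator; so the whole content of the lemma lives in the sub-leading terms. One must therefore check that each remainder above is a genuine positive power of $\rho_n$ (or at least $o(1/|\ln\rho_n|)$), and in particular that the several contributions which \emph{a priori} look of size $|\ln\rho_n|\times o(1)$ are really $o(1)$; the actual labour is the bookkeeping of the powers of $\rho_n$ in the cross terms $\int_\Omega V_{\ell n}\phi_n\,PZ_i$, $\ell\neq i$. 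The non-monotonicity of the singularity coefficients $\al_j$ (Remark~\ref{rmk:ainotincreasing}) prevents a uniform estimate and forces the case distinction $\ai<\al_\ell$ versus $\ai\ge\al_\ell$; in each case the relevant exponent stays strictly positive because $\min(\ai,\al_\ell)>0$ and $\de_{\ell n}=o(\de_{in})$ for $\ell<i$ (resp.\ $\de_{in}=o(\de_{\ell n})$ for $\ell>i$), so the argument closes. All the needed quantitative input — the rates $\de_{jn}^{\al_j}=c_j\rho_n^{r_j}$ and $\de_{jn}/\de_{j+1,n}$, the projection expansion of Lemma~\ref{lem:proj}, and Lemmas~\ref{lem:Aj}--\ref{lem:lowerorder} — is already available from Sections~\ref{sec:aidei}--\ref{sec:Thetaj}.
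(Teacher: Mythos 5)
Your proposal follows essentially the same route as the paper: test the equation against the projected kernel element $P\Zi$ (your $Z_i$ is the paper's $\Zi$ multiplied by $-\ai$, which changes nothing), use that $Z_i-PZ_i$ is constant up to $O(\dei^{\ai})$ to extract $\scpi$ from the $\ell=i$ term, and obtain the factor $2$ in the cross terms $\ell<i$ from the fact that $P\Zi$ tends to twice its central value on the faster scales while it decays on the slower ones ($\ell>i$); this is exactly the content of Lemma~\ref{lem:GPexpansions} and of the splitting at radius $\Rj=\sqrt{\de_{j+1}/\dej}$ in Lemmas~\ref{lem:PhiZifirst}--\ref{lem:phifinal}.

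One justification you give does not stand as written: the uniform bound $\|\phi_n\|_{L^\infty(\Omega)}\le C$ does not follow from $\|\nabla\phi_n\|_{L^2}=1$ by ``standard elliptic regularity'' --- in dimension two $H_0^1$ does not embed into $L^\infty$, and the potentials $V_{\ell n}$ are not uniformly bounded in any $L^p$ with $p>1$ (their $L^p$ norms blow up like $\de_{\ell n}^{2(1-p)/p}$), so no uniform a priori $L^\infty$ estimate is available. This is however repairable without changing your architecture: the error term $O(\dei^{\ai})\int_\Omega V_{in}|\phi_n|$ is controlled by Cauchy--Schwarz, $\int_\Omega V_{in}|\phi_n|\le(\int_\Omega V_{in})^{1/2}\|\phi_n^i\|_{L_{\ai}}=O(1)$, using the boundedness of the rescaled sequence in the weighted space $L_{\ai}(\rr^2)$; and the cross terms are handled, as in the paper, by H\"older against $\|\phij\|_{L^{r/(r-1)}(\Omj)}=\dej^{-2(r-1)/r}\|\phi_n\|_{L^{r/(r-1)}(\Omega)}$ with $0<r-1\ll1$, the negative power of $\dej$ being absorbed by the positive powers of $\rho_n$ coming from $\Rj^{-\ai}$ and $\dei^{\ai}$. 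With that substitution your argument coincides with the paper's proof.
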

We first show how Lemma~\ref{lem:scplimit} implies Proposition~\ref{prop:gajzero}.
Then, we devote the remaining part of this section to the proof of Lemma~\ref{lem:scplimit}.
\begin{proof}[Proof of Proposition~\ref{prop:gajzero}]
We use the following identities. For $i=1,2,\ldots,k$ there holds:
\begin{equation}
\label{eq:intid}
\begin{aligned}
&\int_{\rr^2}2\ai^2\frac{|y|^{\ai-2}}{(1+|y|^{\ai})^2}\frac{1-|y|^{\ai}}{1+|y|^{\ai}}\,dy=0\\
&\int_{\rr^2}2\ai^2\frac{|y|^{\ai-2}}{(1+|y|^{\ai})^2}\frac{1-|y|^{\ai}}{1+|y|^{\ai}}\ln(1+|y|^{\ai})^2\,dy=-4\pi\ai\\
&\int_{\rr^2}2\ai^2\frac{|y|^{\ai-2}}{(1+|y|^{\ai})^2}\frac{1-|y|^{\ai}}{1+|y|^{\ai}}\ln|y|\,dy=-4\pi.
\end{aligned}
\end{equation}
Using the equations for $\phi_n$ and $Pw_{in}$, we find
\[
\int_\Omega2\ai^2\frac{\de_{in}^{\ai}|x|^{\ai-2}}{(\de_{in}^{\ai}+|x|^{\ai})^2}\phi_n
=\sum_{j=1}^k\int_\Omega2\aj^2\frac{\de_{jn}^{\aj}|x|^{\aj-2}}{(\dej^{\aj}+|x|^{\aj})^2}\phi_nPw_{in}+\int_\Omega\psi Pw_{in},
\]
where $w_{in}=w_{\de_{in}}^{\ai}$.
The first term above vanishes as $\rho_n\to0^+$,
in view of the form~\eqref{eq:phirr} of $\phi_0^i$ and of the first integral in \eqref{eq:intid}.
In order to evaluate the second term, we note that similarly as in \cite{GrossiPistoia} we find 
\[
\begin{aligned}
\int_\Omega&2\aj^2\frac{\de_{jn}^{\aj}|x|^{\aj-2}}{(\de_{jn}^{\aj}+|x|^{\aj})^2}\phi_n Pw_{in}\\
&=\begin{cases}
-2(2(k-i)+1)\sigma_j(\rho_n)+o(1)
&\mbox{if $j<i$}\\
-2(2(k-i)+1)\sigma_i(\rho_n)
+\int_{\Omega_n^i}2\ai^2\frac{\de_{in}^{\ai}|x|^{\ai-2}}{(\de_{in}^{\ai}+|x|^{\ai})^2}\phi_n^i(y)
[-2\ln(1+|y|^{\ai})]\,dy+o(1),
&\mbox{if $j=i$}\\
-2(2(k-j)+1)\sigma_j(\rho_n)
+\int_{\Omega_n^j}2\aj^2\frac{\de_{jn}^{\aj}|x|^{\aj-2}}{(\de_{jn}^{\aj}+|x|^{\aj})^2}\phi_n^j(y)
[-2\ai\ln|y|]\,dy+o(1),
&\mbox{if $j>i$}
\end{cases}
\end{aligned}
\]
Therefore, Lemma~\ref{lem:scplimit} and \eqref{eq:intid} yield
\[
\begin{aligned}
\sum_{j=1}^k\int_\Omega2\aj^2\frac{\de_{jn}^{\aj}|x|^{\aj-2}}{(\de_{jn}^{\aj}+|x|^{\aj})^2}\phi_nPw_{in}
=\begin{cases}
4\pi\ai(\eta_i+2\sum_{j=i+1}^k\eta_j)+o(1),
&\mbox{if $i=1,2,\ldots,k-1$}\\
4\pi\al_k\eta_k+o(1),
&\mbox{if $i=k$}.
\end{cases}
\end{aligned}
\]
In turn, we obtain
\[
\eta_k=0
\qquad\qquad\mbox{and\ }\qquad\qquad
\eta_i+2\sum_{j=i+1}^k\eta_j=0
\qquad\qquad\mbox{for any\ }i=1,2,\ldots,k-1.
\]
Proposition~\ref{prop:gajzero} is thus established.
\end{proof}
We are left to prove the asymptotic behavior of the quantities $\scp_{i}(\rho_n)$,
$i=1,2,\ldots,k$, as stated in Lemma~\ref{lem:scplimit}.
\subsection{Proof of Lemma~\ref{lem:scplimit}}
Throughout this subsection, for the sake of simplicity, we omit the index $n$.
In order to establish Lemma~\ref{lem:scplimit} 
we set
\begin{equation}
\label{def:Zi}
\Zi(x)=\frac{\dei^{\ai}-|x|^{\ai}}{\dei^{\ai}+|x|^{\ai}}
\end{equation}
and we denote by $P\Zi$ its projection onto $H_0^1(\Omega)$.
Then, using the equations for $\phi$ and $P\Zi$, we find that the sequence $\phi$ satisfies the identity
\begin{equation*}
\begin{aligned}
\ln\rho\int_\Omega2\ai^2\frac{\dei^{\ai}|x|^{\ai-2}}{(\dei^{\ai}+|x|^{\ai})^2}\phi(x)\Zi(x)\,dx
=&\ln\rho\sum_{j=1}^k\int_\Omega2\aj^2\frac{\dej^{\aj}|x|^{\aj-2}}{(\dej^{\aj}+|x|^{\aj})^2}\phi(x)P\Zi(x)\,dx\\
&+\ln\rho\int_\Omega\psi(x)P\Zi(x)\,dx.
\end{aligned}
\end{equation*}
Equivalently, we may write
\begin{equation*}
\begin{aligned}
\ln\rho\int_\Omega2\ai^2&\frac{\dei^{\ai}|x|^{\ai-2}}{(\dei^{\ai}+|x|^{\ai})^2}\phi(x)(P\Zi(x)-\Zi(x))\,dx\\
+&\ln\rho\sum_{j\neq i}^k\int_\Omega2\aj^2\frac{\dej^{\aj}|x|^{\aj-2}}{(\dej^{\aj}+|x|^{\aj})^2}\phi(x)P\Zi(x)\,dx
+\ln\rho\int_\Omega\psi(x)P\Zi(x)\,dx=0.
\end{aligned}
\end{equation*}
The asserted identities~\eqref{eq:scplimit} will then follow from the following facts.
\begin{lemma}
\label{lem:GPexpansions}
The following expansions hold.
\begin{enumerate}
  \item[(i)]
$\ln\rho\int_\Omega2\ai^2\frac{\dei^{\ai}|x|^{\ai-2}}{(\dei^{\ai}+|x|^{\ai})}\phi(x)(P\Zi(x)-\Zi(x))\,dx
=\scpi(\rho)+o(1)$ 
\item[(ii)]
$\ln\rho\int_\Omega\psi(x)P\Zi(x)\,dx=o(1)$
\item[(iii)] If $j>i$, then
\[ 
\ln\rho\int_\Omega2\aj^2\frac{\dej^{\aj}|x|^{\aj-2}}{(\dej^{\aj}+|x|^{\aj})^2}\phi(x)P\Zi(x)\,dx=o(1)
\]
\item[(iv)] If $j<i$, then
\begin{equation}
\label{eq:desiredest}
\ln\rho\int_\Omega2\aj^2\frac{\dej^{\aj}|x|^{\aj-2}}{(\dej^{\aj}+|x|^{\aj})^2}\phi(x)P\Zi(x)\,dx
=2\scpj(\rho)+o(1)
\end{equation}
\end{enumerate}
\end{lemma}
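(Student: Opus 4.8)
The plan is to prove (i)--(iv) by rescaling each integral to the natural bubble scale --- $\dei$ in (i) and $\dej$ in (ii)--(iv) --- and by estimating every error term by a \emph{positive power} of $\rho$; mere $o(1)$ would not do, since each expression is multiplied by $\ln\rho$. The recurring ingredients will be: the projection expansion
\[
P\Zi(x)=\Zi(x)+1+O(\dei^{\ai})\qquad\text{uniformly on }\overline\Omega,
\]
obtained by applying the maximum principle to the harmonic function $P\Zi-\Zi-1$, whose boundary data are $O(\dei^{\ai})$ because $\Zi=-1+O(\dei^{\ai})$ on $\partial\Omega$ (here $0\in\Omega$ keeps $|x|$ bounded below); in particular $\|P\Zi\|_{L^\infty(\Omega)}\le 2+o(1)$. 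Further, the scale invariance of the Dirichlet energy, $\|\nabla\phi\|=1$, together with the uniform bound $\|\phi^j\|_{L_{\aj}}\le C$ and $\phi^j\to\phi_0^j$ in $L_{\aj}$ from Lemma~\ref{lem:etaj} (where $\phi^j(y)=\phi(\dej y)$). And Proposition~\ref{prop:deimain}, which gives $\dei^{\ai}=c_i\rho^{r_i}$ with $r_i>0$ and $\dei/\dej=O(\rho^{\beta})$ with $\beta=\beta(i,j)>0$ whenever $i<j$, together with the elementary facts that, for $\al\ge2$, the function $y\mapsto 2\al^2|y|^{\al-2}(1+|y|^{\al})^{-2}$ lies in $L^1(\rr^2)$ with integral $4\pi\al$ and that its square root lies in $L^2(\rr^2)$.

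For (i), insert $P\Zi-\Zi=1+O(\dei^{\ai})$: after the substitution $x=\dei y$ the contribution of the constant $1$ is exactly $\scpi(\rho)$, while the remainder is bounded by $O(\dei^{\ai})\int_{\Omega/\dei}2\ai^2\tfrac{|y|^{\ai-2}}{(1+|y|^{\ai})^2}|\phi^i|\,dy=O(\dei^{\ai})$ by Cauchy--Schwarz in $L_{\ai}$, and $\ln\rho\cdot O(\dei^{\ai})=o(1)$. For (ii), $\|P\Zi\|_\infty\le C$ gives $|\ln\rho|\,\bigl|\int_\Omega\psi P\Zi\bigr|\le C|\Omega|^{1/p'}|\ln\rho|\,\|\psi\|_p=o(1)$ by hypothesis.

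For (iii), with $j>i$ (so $\dej\gg\dei$), rescale by $\dej$; then $P\Zi(\dej y)=\tfrac{2\dei^{\ai}}{\dei^{\ai}+(\dej|y|)^{\ai}}+O(\dei^{\ai})=O\bigl(\min\{1,(\dei/(\dej|y|))^{\ai}\}\bigr)+O(\dei^{\ai})$, and splitting $\rr^2$ at $|y|=\dei/\dej\ (\ll1)$ and applying Cauchy--Schwarz against $\|\phi^j\|_{L_{\aj}}$ shows the rescaled integral is $O(\rho^{\beta})$ with $\beta>0$, exactly as in \cite{GrossiPistoia}. For (iv), with $j<i$ (so $\dej\ll\dei$), rescale by $\dej$; now $\Zi(\dej y)=1-\tfrac{2(\dej|y|/\dei)^{\ai}}{1+(\dej|y|/\dei)^{\ai}}$, so $P\Zi(\dej y)-2=-\tfrac{2(\dej|y|/\dei)^{\ai}}{1+(\dej|y|/\dei)^{\ai}}+O(\dei^{\ai})$. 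Hence
\[
\ln\rho\int_{\Omega/\dej}\!2\aj^2\tfrac{|y|^{\aj-2}}{(1+|y|^{\aj})^2}\phi^j\,P\Zi(\dej\,\cdot\,)\,dy
=2\scpj(\rho)+\ln\rho\int_{\Omega/\dej}\!2\aj^2\tfrac{|y|^{\aj-2}}{(1+|y|^{\aj})^2}\phi^j\,\bigl[P\Zi(\dej\,\cdot\,)-2\bigr]\,dy,
\]
and it remains to bound the last term. I would split $\Omega/\dej$ at $|y|=\dei/\dej\ (\gg1)$: on $\{|y|<\dei/\dej\}$ one has $|P\Zi(\dej y)-2|\le C(\dej|y|/\dei)^{\ai}+C\dei^{\ai}$, and Cauchy--Schwarz against $\|\phi^j\|_{L_{\aj}}$ then bounds the integral by $O((\dej/\dei)^{\ai})$ if $\aj>2\ai$ and by $O((\dej/\dei)^{\aj/2})$ if $\aj\le2\ai$; on $\{|y|>\dei/\dej\}$ one has $|P\Zi(\dej y)-2|\le C$ and $\int_{|y|>\dei/\dej}2\aj^2\tfrac{|y|^{\aj-2}}{(1+|y|^{\aj})^2}\,dy=O((\dej/\dei)^{\aj})$, yielding $O((\dej/\dei)^{\aj/2})$. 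Since $\dej/\dei$ is a positive power of $\rho$, multiplying by $\ln\rho$ gives $o(1)$, and (iv) follows.

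The hard part is (iv) on the region $\{|y|<\dei/\dej\}$: this is a huge annulus about the $j$-th bubble, with outer radius $\dei/\dej\to\infty$, on which the decay $|P\Zi(\dej y)-2|=O((\dej|y|/\dei)^{\ai})$ must be balanced against the growth of the weight times $|y|^{\ai}$ at infinity. When $\ga=1$ one has $\aj<2\ai$ for every pair $j<i$, and a single estimate suffices; in the asymmetric case $\aj$ is not monotone in $j$ (Remark~\ref{rmk:ainotincreasing}), so $\aj>2\ai$ can occur even with $j<i$, which forces the case distinction according as $\aj$ exceeds $2\ai$ or not, and this is precisely where the argument departs from \cite{GrossiPistoia}.
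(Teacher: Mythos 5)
Your proposal is correct, and for the decisive part (iv) it takes a genuinely different route from the paper. The paper disposes of (i)--(iii) by reference to Grossi--Pistoia and concentrates on (iv), which it proves by splitting the rescaled domain $\Omj$ at the radius $\Rj=\sqrt{\de_{j+1}/\dej}$: on $B_{\Rj}$ the quantity $P\Zi(\dej y)-2$ is \emph{uniformly} $O(\Rj^{-\ai})+O(\dei^{\ai})$ (this is where $|y|^{\ai}(\dej/\dei)^{\ai}\le(\dej/\dei)^{\ai/2}$ enters), while on $\Omj\setminus B_{\Rj}$ the tail of the weight is controlled via H\"older against $\|\phij\|_{L^{r/(r-1)}(\Omj)}=\dej^{-2(r-1)/r}\|\phi\|_{L^{r/(r-1)}(\Omega)}$ with $r$ close to $1$, so that the negative power of $\dej$ coming from the rescaling is absorbed; a further lemma is then needed to replace $\int_{B_{\Rj}}$ by $\int_{\Omj}$ in order to recover $2\scpj(\rho)$. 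You instead split at the much larger radius $\dei/\dej$ and pair everything against $\|\phij\|_{L_{\aj}}$ by Cauchy--Schwarz in the weighted space, which (a) produces no negative powers of $\dej$ at all, and (b) keeps the full-domain integral intact so that $2\scpj(\rho)$ appears immediately and no analogue of the paper's final reduction lemma is required. The price is that the weight integral $\int_{|y|<\dei/\dej}|y|^{2\ai+\aj-2}(1+|y|^{\aj})^{-2}\,dy$ may or may not converge as the outer radius goes to infinity, forcing your case distinction according as $\aj\gtrless2\ai$ — and your diagnosis is accurate: for $\ga=1$ the $\aj$ are increasing so $\aj<2\ai$ always holds for $j<i$, whereas for small $\ga$ one can have $\aj>2\ai$ with $j<i$ (e.g.\ $\al_3$ versus $\al_4$), which is exactly the non-monotonicity of Remark~\ref{rmk:ainotincreasing} that makes the asymmetric case delicate. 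Two cosmetic points: in the borderline case $\aj=2\ai$ your inner estimate acquires a harmless logarithmic factor, still beaten by the positive power of $\rho$ in $\dej/\dei$; and your uniform bound $\|\phij\|_{L_{\aj}}\le C$ is legitimately imported from Lemma~\ref{lem:etaj}, which precedes this lemma. Your treatments of (i)--(iii) match the standard argument the paper cites.
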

The proof of Lemma~\ref{lem:GPexpansions}--(i)--(ii)--(iii) is completely analogous to \cite{GrossiPistoia}.
On the other hand, the proof of Lemma~\ref{lem:GPexpansions}--(iv) is different,
due to the particular dependence on $\ga$ of $\ai,\dei$.
Therefore, we provide the proof of Lemma~\ref{lem:GPexpansions}--(iv).
The underlying idea is that in order to control the integrals on the expanding domain
$\Omega_n^j$ it is convenient to decompose $\Omega_n^j=B_{R_j}\cup(\Omega_n^j\setminus B_{R_j})$,
with $R_j$ suitably defined as follows.
\par
For any $j=1,2,\ldots,k-1$ we define
\begin{equation}
\label{def:Rj}
\Rj:=\sqrt{\frac{\de_{j+1}}{\dej}}.
\end{equation}
Then, $\Rj\to+\infty$ as $\rho\to0$.
\par
We recall some elementary facts.
\begin{lemma}
\label{lem:elemfacts}
The following properties hold.
\begin{enumerate}
  \item[(i)] For $R\to+\infty$ and for any $\beta>0$ there holds
\[
\int_{\mathbb R^2\setminus B_R}\frac{dy}{|y|^{2+\beta}}=O(\frac{1}{R^\beta});
\]  
  \item[(ii)] If $j<i$, then for all $y\in B_{\Rj}$
\[
|y|^{\ai}\left(\frac{\dej}{\dei}\right)^{\ai}\le\left(\frac{\dej}{\dei}\right)^{\ai/2}=O(\frac{1}{\Rj^{\ai}});
\] 
\item[(iii)] For any $q>1$ there holds
\[
\|\phij\|_{L^q(\Omj)}=\frac{1}{\dej^{2/q}}\|\phi\|_{L^q(\Omega)}.
\] 
 \end{enumerate}
\end{lemma}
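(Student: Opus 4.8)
The three assertions are elementary; the plan is to verify each by a direct computation, using only the ordering of the concentration parameters established in Proposition~\ref{prop:deimain}. For (i) I would simply pass to polar coordinates:
\[
\int_{\rr^2\setminus B_R}\frac{dy}{|y|^{2+\beta}}=2\pi\int_R^{+\infty}r^{-1-\beta}\,dr=\frac{2\pi}{\beta}\,R^{-\beta}=O\Big(\frac{1}{R^\beta}\Big).
\]
For (iii) I would perform the change of variables $x=\dej y$ in the integral defining the $L^q$ norm: since $\Omj=\Omega/\dej$ and $\phij(y)=\phi(\dej y)$,
\[
\|\phij\|_{L^q(\Omj)}^q=\int_{\Omega/\dej}|\phi(\dej y)|^q\,dy=\frac{1}{\dej^2}\int_\Omega|\phi(x)|^q\,dx=\frac{1}{\dej^2}\|\phi\|_{L^q(\Omega)}^q,
\]
and taking $q$-th roots gives the asserted identity.

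For (ii), the main input is that for $j<i$ one has $\dej\le\de_{j+1}\le\cdots\le\dei$ once $\rho$ is small, because $s_j>s_{j+1}>\cdots>s_i$ by Proposition~\ref{prop:deimain} (equivalently $\de_\ell=o(\de_{\ell+1})$); in particular $\de_{j+1}\le\dei$. For $y\in B_{\Rj}$ we have $|y|<\Rj=\sqrt{\de_{j+1}/\dej}$, hence
\[
|y|^{\ai}\Big(\frac{\dej}{\dei}\Big)^{\ai}=\Big(\frac{|y|^2\dej^2}{\dei^2}\Big)^{\ai/2}\le\Big(\frac{\de_{j+1}\dej}{\dei^2}\Big)^{\ai/2}\le\Big(\frac{\dej}{\dei}\Big)^{\ai/2},
\]
the last step using $\de_{j+1}\le\dei$. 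Writing $(\dej/\dei)^{\ai/2}=(\dej/\de_{j+1})^{\ai/2}(\de_{j+1}/\dei)^{\ai/2}\le(\dej/\de_{j+1})^{\ai/2}=\Rj^{-\ai}$, again by $\de_{j+1}\le\dei$, yields the claimed $O(\Rj^{-\ai})$.

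None of these steps is an obstacle; the only point requiring a little care is the bookkeeping in (ii), namely keeping track that the inequality $\de_{j+1}\le\dei$ (valid since $j+1\le i$) is applied in the right places, and recalling that $\Rj\to+\infty$ as $\rho\to0^+$ so that $B_{\Rj}$ indeed exhausts $\rr^2$.
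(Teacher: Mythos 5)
Your proposal is correct and follows essentially the same route as the paper: part (i) is the standard polar-coordinate computation, part (iii) is the exact scaling identity under $x=\dej y$, and part (ii) reproduces precisely the chain of inequalities the paper establishes in \eqref{est:igej} (bounding $|y|^{\ai}$ by $(\de_{j+1}/\dej)^{\ai/2}$ and then using $\de_{j+1}\le\dei$ for $i\ge j+1$). The only cosmetic difference is that you split the final comparison with $\Rj^{-\ai}$ into an explicit extra step, which is harmless.
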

\begin{proof}
Part~(i) is elementary.
Proof of (ii). See \eqref{est:igej} or \cite{GrossiPistoia}.
Proof of (iii). We use H\"older's inequality.
\end{proof}
\begin{lemma}
\label{lem:Ziexp}
The following expansions hold for the function $\Zi(x)$ defined in \eqref{def:Zi}.
\begin{enumerate}
\item [(i)] For any $x\in\Omega$ there holds
\[
P\Zi(x)=\Zi(x)+1+O(\dei^{\ai})=\frac{2\dei^{\ai}}{\dei^{\ai}+|x|^{\ai}}+O(\dei^{\ai});
\] 
In particular, $|P\Zi(x)|\le2+O(\dei^{\ai})$, $i=1,2,\ldots,k$.
\item[(ii)] If $j<i$ and $y\in B_{\Rj}$, then
\[
P\Zi(\dej y)=\frac{2\dei^{\ai}}{\dei^{\ai}+\dej^{\ai}|y|^{\ai}}+O(\dei^{\ai})
=\frac{2}{1+\left(\frac{\dej}{\dei}\right)^{\ai}|y|^{\ai}}+O(\dei^{\ai})
=\frac{2}{1+O(\Rj^{-\ai})}
\]
and
\[
P\Zi(\dej y)-2=-\frac{2\left(\frac{\dej}{\dei}\right)^{\ai}|y|^{\ai}}{1+\left(\frac{\dej}{\dei}\right)^{\ai}|y|^{\ai}}+O(\dei^{\ai})
=O(\frac{1}{\Rj^{\ai}})+O(\dei^{\ai}),
\]
where $\Rj$ is defined in \eqref{def:Rj}.
\end{enumerate}
\end{lemma}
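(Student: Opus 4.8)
The plan is to use the elementary fact that, by definition of the projection, the difference $P\Zi-\Zi$ is harmonic in $\Omega$ and equals $-\Zi$ on $\partial\Omega$; part~(ii) then reduces to part~(i) after the rescaling $x=\dej y$, combined with the decay estimates already recorded in Lemma~\ref{lem:elemfacts}. No genuinely new idea is needed beyond careful bookkeeping of the two competing small parameters $\dei^{\ai}$ and $\Rj^{-\ai}$.

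For part~(i), the key observation is that on $\partial\Omega$ one has $|x|\ge c_0$ for some $c_0>0$ (since $0\in\Omega$ and $\Omega$ is bounded), so that
\[
\Zi(x)=-1+\frac{2\dei^{\ai}}{\dei^{\ai}+|x|^{\ai}}=-1+O(\dei^{\ai})\qquad\text{uniformly on }\partial\Omega.
\]
Since $\Delta(P\Zi-\Zi)=0$ in $\Omega$ and $P\Zi-\Zi=-\Zi=1+O(\dei^{\ai})$ on $\partial\Omega$, the maximum principle gives $P\Zi-\Zi=1+O(\dei^{\ai})$ throughout $\Omega$. Using $\Zi(x)+1=\frac{2\dei^{\ai}}{\dei^{\ai}+|x|^{\ai}}$ yields both displayed identities for $P\Zi$, and the bound $|P\Zi|\le2+O(\dei^{\ai})$ follows from $0\le\frac{2\dei^{\ai}}{\dei^{\ai}+|x|^{\ai}}\le2$.

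For part~(ii), fix $j<i$ and $y\in B_{\Rj}$, and substitute $x=\dej y$ in the identity of part~(i):
\[
P\Zi(\dej y)=\frac{2\dei^{\ai}}{\dei^{\ai}+\dej^{\ai}|y|^{\ai}}+O(\dei^{\ai})
=\frac{2}{1+\left(\frac{\dej}{\dei}\right)^{\ai}|y|^{\ai}}+O(\dei^{\ai}).
\]
Since $j<i$ one has $\dej=o(\dei)$, and by Lemma~\ref{lem:elemfacts}(ii), $\left(\frac{\dej}{\dei}\right)^{\ai}|y|^{\ai}\le\left(\frac{\dej}{\dei}\right)^{\ai/2}=O(\Rj^{-\ai})$ for all $y\in B_{\Rj}$; this gives the form $P\Zi(\dej y)=\frac{2}{1+O(\Rj^{-\ai})}$. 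Writing $\frac{2}{1+t}-2=-\frac{2t}{1+t}$ with $t=\left(\frac{\dej}{\dei}\right)^{\ai}|y|^{\ai}$ and using the same bound on $t$ produces the last displayed expansion, with error $O(\Rj^{-\ai})+O(\dei^{\ai})$. The only point deserving attention is to keep all $O(\cdot)$ terms uniform in $x\in\Omega$ (respectively in $y\in B_{\Rj}$), so that the remainder $O(\dei^{\ai})$ does not interfere with the leading term; this is exactly where the lower bound $|x|\ge c_0$ on $\partial\Omega$ and the uniform estimate of Lemma~\ref{lem:elemfacts}(ii) are used.
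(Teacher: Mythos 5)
Your proof is correct and follows exactly the route the paper intends: the paper dismisses this lemma with the single line that it ``readily follows'' from the definition of $\Zi$ and Lemma~\ref{lem:elemfacts}--(ii), and the maximum-principle computation you supply for $P\Zi-\Zi$ is the same standard argument the paper invokes for the projection expansion in Lemma~\ref{lem:proj}. Your bookkeeping of the boundary term ($|x|\ge c_0$ on $\partial\Omega$) and of the two small parameters $\dei^{\ai}$ and $\Rj^{-\ai}$ is accurate, so there is nothing to add.
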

\begin{proof}
The proof readily follows from the definition of $\Zi$ in \eqref{def:Zi} and Lemma~\ref{lem:elemfacts}--(ii).
\end{proof}
Rescaling the integral on the l.h.s.\ in \eqref{eq:desiredest}, we have
\[
\int_\Omega\frac{\dej^{\aj}|x|^{\aj-2}}{(\dej^{\aj}+|x|^{\aj})^2}\phi(x)P\Zi(x)\,dx
\stackrel{x=\dej y}{=}\int_{\Omj}\frac{|y|^{\aj-2}}{(1+|y|^{\aj})^2}\phij(y)P\Zi(\dej y)\,dy.
\]
The proof of \eqref{eq:desiredest} will finally follow from the following lemmas.
\begin{lemma}
\label{lem:PhiZifirst}
There exists $\beta_1>0$ such that
\begin{align}
\label{est:phiZfirst}
&\int_{\Omj\setminus B_{Rj}}\frac{|y|^{\aj-2}}{(1+|y|^{\aj})^2}\phij(y)P\Zi(\dej y)\,dy=O(\rho^{\beta_1});\\
\label{est:phiZsecond}
&\int_{B_{Rj}}\frac{|y|^{\aj-2}}{(1+|y|^{\aj})^2}\phij(y)(P\Zi(\dej y)-2)\,dy=O(\rho^{\beta_1}).
\end{align}
\end{lemma}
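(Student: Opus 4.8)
The two estimates are both of the form: an integral over the rescaled domain $\Omega_n^j$ of the weight $|y|^{\aj-2}(1+|y|^{\aj})^{-2}$, times $\phi^j$, times a bounded factor (namely $P\Zi(\dej y)$ in the first, $P\Zi(\dej y)-2$ in the second), is $O(\rho^{\beta_1})$ for some $\beta_1>0$. I would treat them by splitting the weight's decay against the decay of the other factor, using Hölder's inequality to bring in the unit-norm bound $\|\phi\|=1$ after converting to $\|\phi^j\|_{L^q(\Omega_n^j)}$ via Lemma~\ref{lem:elemfacts}--(iii), which supplies a compensating power $\dej^{-2/q}=\rho^{-2s_j/q}$.

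\textbf{Estimate \eqref{est:phiZsecond} (the ball $B_{R_j}$).} On $B_{R_j}$ we have from Lemma~\ref{lem:Ziexp}--(ii) that $|P\Zi(\dej y)-2|=O(R_j^{-\ai})+O(\dei^{\ai})$, uniformly in $y\in B_{R_j}$, which is a power of $\rho$ since $R_j=\sqrt{\de_{j+1}/\dej}=\big(d_{j+1}/d_j\big)^{1/2}\rho^{-q_j/2}$ grows like a negative power of $\rho$ (Proposition~\ref{prop:deimain}) and $\dei^{\ai}=O(\rho^{\gamma})$; so $R_j^{-\ai}=O(\rho^{q_j \ai/2})$. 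Pulling this uniform bound out of the integral leaves $\int_{B_{R_j}}\frac{|y|^{\aj-2}}{(1+|y|^{\aj})^2}|\phi^j(y)|\,dy$, which by Hölder (with exponents $q$ and $q'$, $q>1$ close to $1$) is bounded by $\big\|\frac{|y|^{(\aj-2)}}{(1+|y|^{\aj})^2}\big\|_{L^{q'}(\mathbb R^2)}\,\|\phi^j\|_{L^q(\Omega_n^j)}$. The $L^{q'}$ norm of the weight is finite for $q'$ large (equivalently $q$ close to $1$), since near $0$ the exponent $(\aj-2)q'$ stays above $-2$ as $\aj\ge2$, and near $\infty$ the integrand decays like $|y|^{-(\aj+2)q'}$; and $\|\phi^j\|_{L^q(\Omega_n^j)}=\dej^{-2/q}\|\phi\|_{L^q(\Omega)}\le C\dej^{-2/q}=C\rho^{-2s_j/q}$ using $\|\phi\|=1$ and the continuous embedding $H_0^1(\Omega)\hookrightarrow L^q(\Omega)$. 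Altogether the product is $O(\rho^{q_j \ai/2 - 2s_j/q})$, which is a strictly positive power of $\rho$ provided $q$ is taken close enough to $1$, since $q_j\ai/2>0$ is fixed; this gives $\beta_1$.

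\textbf{Estimate \eqref{est:phiZfirst} (the exterior $\Omega_n^j\setminus B_{R_j}$).} Here I use $|P\Zi(\dej y)|\le 2+O(\dei^{\ai})\le C$ (Lemma~\ref{lem:Ziexp}--(i)) and discard that factor, reducing to $\int_{\Omega_n^j\setminus B_{R_j}}\frac{|y|^{\aj-2}}{(1+|y|^{\aj})^2}|\phi^j(y)|\,dy$. Hölder again against $\|\phi^j\|_{L^q(\Omega_n^j)}\le C\rho^{-2s_j/q}$ reduces matters to $\big\|\frac{|y|^{\aj-2}}{(1+|y|^{\aj})^2}\big\|_{L^{q'}(\mathbb R^2\setminus B_{R_j})}$; since on $|y|>R_j\to\infty$ the weight behaves like $|y|^{-(\aj+2)}$, Lemma~\ref{lem:elemfacts}--(i) gives this norm is $O(R_j^{-[(\aj+2)q'-2]/q'})=O\big(R_j^{-(\aj+2)+2/q'}\big)=O(\rho^{(q_j/2)[(\aj+2)-2/q']})$. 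As $q'\to\infty$ the exponent tends to $(q_j/2)(\aj+2)>0$, so again for $q$ close to $1$ the full product is $O(\rho^{\beta_1})$ with $\beta_1>0$. Taking the minimum of the two exponents produced gives a single $\beta_1$ valid for both \eqref{est:phiZfirst} and \eqref{est:phiZsecond}.

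\textbf{Main obstacle.} The delicate point is the bookkeeping of exponents: one must check that the positive powers of $\rho$ coming from $R_j^{-\ai}$ (resp.\ $R_j^{-(\aj+2)+2/q'}$) genuinely dominate the negative power $\rho^{-2s_j/q}$ produced by renormalizing $\phi$, i.e.\ that there is a choice of $q>1$ (equivalently $q'<\infty$) making the net exponent strictly positive \emph{simultaneously} for all $j=1,\dots,k-1$ — this is where the explicit formulas for $q_j$, $s_j$ and $\aj$ from Proposition~\ref{prop:aimain} and Proposition~\ref{prop:deimain} enter, together with the fact that the relevant exponents $q_j\ai/2$ and $q_j(\aj+2)/2$ are fixed positive constants while $2s_j/q\to 2s_j$ as $q\to1^+$. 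Once one observes that one only needs $q$ close to $1$ (not $q=1$) and that there are finitely many $j$, uniformity is automatic; the apparent difficulty is thus mostly notational. A secondary technical point, handled exactly as in \cite{GrossiPistoia}, is that on the exterior region the weight has no integrability issue at the origin so the only competition is at infinity, which is why splitting at $R_j$ rather than at a fixed radius is essential.
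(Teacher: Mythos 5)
Your decomposition (exterior region controlled via $|P\Zi|\le 2+O(\dei^{\ai})$, ball controlled via $|P\Zi(\dej y)-2|=O(R_j^{-\ai})+O(\dei^{\ai})$, then H\"older against $\phij$ and the rescaling identity for its Lebesgue norm) is exactly the paper's skeleton, but your H\"older split is the wrong way around, and this breaks both estimates. You place $\phij$ in $L^{q}(\Omj)$ with $q>1$ close to $1$ and the weight in $L^{q'}$ with $q'$ large. The identity $\|\phij\|_{L^q(\Omj)}=\dej^{-2/q}\|\phi\|_{L^q(\Omega)}$ then costs a factor $\rho^{-2s_j/q}$, and as $q\to1^+$ this tends to $\rho^{-2s_j}$, a \emph{fixed} negative power of $\rho$; it does not become harmless by taking $q$ close to $1$. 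Your closing claim that the fixed gains $q_j\ai/2$, $q_j(\aj+2)/2$ and the exponent of $\dei^{\ai}$ dominate $2s_j$ is false in general: already for $k=2$, $j=1$, $i=2$, $\ga=1$ one has $2s_1=3$, while $\de_2^{\al_2}=O(\rho)$ contributes a gain of only $1$ to \eqref{est:phiZsecond}, and $q_1(\al_1+2)/2=\tfrac43\cdot2=\tfrac83<3$ in \eqref{est:phiZfirst}; the net exponents are $-2$ and $-\tfrac13$, so the argument does not close.

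The repair is the paper's choice of exponents: apply H\"older with $\phij\in L^{r/(r-1)}(\Omj)$ for $r>1$ close to $1$ (so the exponent carried by $\phij$ is \emph{large}) and the weight in $L^{r}$. Then the rescaling cost is $\dej^{-2(r-1)/r}=O(\rho^{-2s_j(r-1)/r})$, an arbitrarily small negative power of $\rho$, while $\|\phi\|_{L^{r/(r-1)}(\Omega)}\le C\|\phi\|=C$ by Sobolev embedding for every finite exponent. The gains are unchanged: $\bigl(\int_{\Omj\setminus B_{R_j}}[\,|y|^{\aj-2}(1+|y|^{\aj})^{-2}]^{r}\bigr)^{1/r}=O(R_j^{-(\aj+2)+2/r})$ on the exterior, the full-plane $L^{r}$ norm of the weight is finite for $r-1$ small since $\aj\ge2$, and on the ball the prefactor $O(R_j^{-\ai})+O(\dei^{\ai})$ is a fixed positive power of $\rho$. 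With that swap the net exponent is (fixed positive) minus (arbitrarily small), and your computation goes through.
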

\begin{proof}
We have, using Lemma~\ref{lem:Ziexp}--(i):
\begin{align*}
\int_{\Omj\setminus B_{Rj}}\left|\frac{|y|^{\aj-2}}{(1+|y|^{\aj})^2}\phij(y)P\Zi(\dej y)\right|\,dy
\le&\int_{\Omj\setminus B_{Rj}}\frac{|y|^{\aj-2}}{(1+|y|^{\aj})^2}|\phij(y)|(2+O(\dei^{\ai}))\,dy\\
&=O\left(\int_{\Omj\setminus B_{Rj}}\frac{|y|^{\aj-2}}{(1+|y|^{\aj})^2}|\phij(y)|\right)\,dy.
\end{align*}
In view of Lemma~\ref{lem:elemfacts} and H\"older's inequality we derive that
for any $r>1$ there holds
\begin{align*}
\int_{\Omj\setminus B_{Rj}}&\frac{|y|^{\aj-2}}{(1+|y|^{\aj})^2}|\phij(y)|\,dy
\le\|\phij\|_{L^{r/(r-1)}(\Omj)}
\left(\int_{\Omj\setminus B_{Rj}}\Big[\frac{|y|^{\aj-2}}{(1+|y|^{\aj})^2}\Big]^r\right)^{1/r}\\
=&O\left(\frac{1}{\dej^{2(r-1)/r}}\Big(\int_{\Omj\setminus B_{Rj}}\frac{dy}{|y|^{(\aj+2)r}}\Big)\right)^{1/r}
=O\left(\frac{1}{\dej^{2(r-1)/r}\Rj^{\aj+2-2/r}}\right).
\end{align*}
By taking $0<r-1\ll1$, we obtain estimate~\eqref{est:phiZfirst} for some $\beta_1>0$.
\par
Similarly we have, using Lemma~\ref{lem:Ziexp}--(ii):
\begin{align*}
\int_{B_{Rj}}\frac{|y|^{\aj-2}}{(1+|y|^{\aj})^2}&\phij(y)(P\Zi(\dej y)-2)\,dy\\
&=O\left(\int_{B_{Rj}}\frac{|y|^{\aj-2}}{(1+|y|^{\aj})^2}\phij(y)[O(\frac{1}{\Rj^{\ai}})+O(\dei^{\ai})]\,dy\right)\\
&=\left(O(\frac{1}{\Rj^{\ai}})+O(\dei^{\ai})\right)\int_{B_{Rj}}\frac{|y|^{\aj-2}}{(1+|y|^{\aj})^2}|\phij(y)|\,dy\\
&\le\|\phij\|_{L^{r/(r-1)}(\Omj)}(\int_{\mathbb R^2}[\frac{|y|^{\aj-2}}{(1+|y|^{\aj})^2}]^r\,dy)^{1/r}\left(O(\frac{1}{\Rj^{\ai}})+O(\dei^{\ai})\right)\\
&=\frac{1}{\dej^{2(r-1)/r}}\Big(O(\frac{1}{\Rj^{\ai}})+O(\dei^{\ai})\Big).
\end{align*}
By taking $0<r-1\ll1$ and possibly a smaller value for $\beta_1>0$, we deduce estimate~\eqref{est:phiZsecond}.
\end{proof}
We conclude from Lemma~\ref{lem:PhiZifirst}
that if $j<i$, then
\begin{equation}
\ln\rho\int_\Omega\frac{2\aj^2\dej^{\aj}|x|^{\aj-2}}{(\dej^{\aj}+|x|^{\aj})^2}\phi(x)P\Zi(x)\,dx
=2\ln\rho\int_{B_{\Rj}}\frac{2\aj^2|y|^{\aj-2}}{(1+|y|^{\aj})^2}\phij(y)\,dy+o(1).
\end{equation}
\begin{lemma}
\label{lem:phifinal}
There exists $\beta_2>0$ such that
\[
\int_{\Omj}\frac{|y|^{\aj-2}}{(1+|y|^{\aj})^2}\phij(y)\,dy
=\int_{B_{\Rj}}\frac{|y|^{\aj-2}}{(1+|y|^{\aj})^2}\phij(y)\,dy+O(\rho^{\beta_2}).
\]
Consequently,
\[
\ln\rho\int_\Omega\frac{2\aj^2\dej^{\aj}|x|^{\aj-2}}{(\dej^{\aj}+|x|^{\aj})^2}\phi(x)P\Zi(x)\,dx
=2\scpj(\rho)+o(1).
\]
\end{lemma}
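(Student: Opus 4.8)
The plan is to split the domain of integration $\Omj=B_{\Rj}\cup(\Omj\setminus B_{\Rj})$, with $\Rj=\sqrt{\de_{j+1}/\dej}$ as in \eqref{def:Rj} (so $\Rj\to+\infty$ as $\rho\to0$), and to show that the outer contribution
\[
\int_{\Omj\setminus B_{\Rj}}\frac{|y|^{\aj-2}}{(1+|y|^{\aj})^2}\phij(y)\,dy
\]
is $O(\rho^{\beta_2})$ for a suitable $\beta_2>0$; this is the first assertion. Granting it, the ``consequently'' part is immediate: the identity established just before the statement,
\[
\ln\rho\int_\Omega\frac{2\aj^2\dej^{\aj}|x|^{\aj-2}}{(\dej^{\aj}+|x|^{\aj})^2}\phi(x)P\Zi(x)\,dx
=2\ln\rho\int_{B_{\Rj}}\frac{2\aj^2|y|^{\aj-2}}{(1+|y|^{\aj})^2}\phij(y)\,dy+o(1),
\]
combined with the first assertion (multiplied by $2\aj^2\ln\rho$, noting $\rho^{\beta_2}|\ln\rho|=o(1)$) and the definition of $\scpj(\rho)$, yields exactly $\ln\rho\int_\Omega\frac{2\aj^2\dej^{\aj}|x|^{\aj-2}}{(\dej^{\aj}+|x|^{\aj})^2}\phi(x)P\Zi(x)\,dx=2\scpj(\rho)+o(1)$, which is \eqref{eq:desiredest}.

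For the outer estimate I would argue as in the proof of \eqref{est:phiZfirst}. H\"older's inequality with conjugate exponents $r$ and $r/(r-1)$ gives, for any $r>1$,
\[
\int_{\Omj\setminus B_{\Rj}}\frac{|y|^{\aj-2}}{(1+|y|^{\aj})^2}|\phij(y)|\,dy
\le\|\phij\|_{L^{r/(r-1)}(\Omj)}\left(\int_{\rr^2\setminus B_{\Rj}}\left[\frac{|y|^{\aj-2}}{(1+|y|^{\aj})^2}\right]^r\,dy\right)^{1/r}.
\]
By Lemma~\ref{lem:elemfacts}--(iii) together with the Sobolev embedding $H_0^1(\Omega)\hookrightarrow L^{r/(r-1)}(\Omega)$ and $\|\phi\|=1$, the first factor is $O(\dej^{-2(r-1)/r})$; since $\aj\ge2$ the integrand of the second factor is $\le|y|^{-(\aj+2)r}$ for $|y|$ large, so Lemma~\ref{lem:elemfacts}--(i) with $\beta=(\aj+2)r-2>0$ bounds the second factor by $O(\Rj^{-(\aj+2)+2/r})$.

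It then suffices to read off a power of $\rho$. By Proposition~\ref{prop:deimain} one has $\dej=d_j\rho^{s_j}$ with $s_j>0$ and $\Rj^2=\de_{j+1}/\dej=(d_{j+1}/d_j)\rho^{-\qj}$ with $\qj>0$, whence the outer integral is $O(\rho^{\beta_2})$ with
\[
\beta_2=\qj\left(\frac{\aj+2}{2}-\frac1r\right)-\frac{2s_j(r-1)}{r}.
\]
Since $\beta_2\to\qj\,\aj/2>0$ as $r\to1^+$, choosing $r>1$ sufficiently close to $1$ makes $\beta_2>0$, which completes the proof. The only delicate point is precisely this last balance: the blow-up $\dej^{-2(r-1)/r}$ produced by rescaling $\phi$ to the expanding variable $y$ has to be dominated by the decay $\Rj^{-(\aj+2)+2/r}$ of the weight away from the origin, and this works only because $\aj\ge2$ and $\qj>0$; everything else is a routine adaptation of the corresponding step in \cite{GrossiPistoia}.
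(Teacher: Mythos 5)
Your proof is correct and follows essentially the same route as the paper: H\"older's inequality on $\Omj\setminus B_{\Rj}$, the scaling identity $\|\phij\|_{L^{r/(r-1)}(\Omj)}=\dej^{-2(r-1)/r}\|\phi\|_{L^{r/(r-1)}(\Omega)}$, the tail estimate for the weight, and then choosing $0<r-1\ll1$ so that the resulting power of $\rho$ is positive. The only difference is that you make the final exponent $\beta_2$ and the limiting value $\qj\aj/2>0$ explicit, which the paper leaves implicit.
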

\begin{proof}
We have, for $r>1$ sufficiently small,
\begin{align*}
|\int_{\Omj\setminus B_{\Rj}}\frac{|y|^{\aj-2}}{(1+|y|^{\aj})^2}\phij(y)\,dy|
\le&\|\phij\|_{L^{r/(r-1)}(\Omj)}\left(\int_{\mathbb R^2\setminus B_{\Rj}}\frac{dy}{|y|^{(\aj+2)r}}\right)^{1/r}\\
=&O\left(\frac{1}{\dej^{2(r-1)/r}\Rj^{\aj+1-1/r}}\right).
\end{align*}
The statement follows by taking $0<r-1\ll1$.
\end{proof}
Estimate~\eqref{eq:desiredest} in Lemma~\ref{lem:GPexpansions}--(iv) is thus completely established.
In turn, the proof of \eqref{eq:scplimit} follows.
Hence, the proof of Proposition~\ref{prop:lineartheory} is complete.
\section{The contraction mapping and the proof of Theorem~\ref{thm:main}}
\label{sec:fixpoint}
In this section we conclude the proof of Theorem~\ref{thm:main} by obtaining a solution $\ur$
to problem~\eqref{eq:pb}
in the form $\ur=\Wr+\phir$, with $\phir$ the fixed point of a contraction mapping.
Indeed, we establish the following existence result.
\begin{prop}
\label{prop:fixedpoint}
For $p>1$ sufficiently close to 1 there exist $\rho_0>0$ and $R>0$ such that
for any $\rho\in(0,\rho_0)$ there exists a unique solution $\phir\in\calH$
to the problem
\[
-\Delta(\Wr+\phir)=\rho f(\Wr+\phir)\ \mathrm{in\ }\Omega,\qquad\phir=0\ \mathrm{on\ }\partial\Omega
\]
satisfying the estimate
\[
\|\phir\|\le R\,\rho^{\overline\beta_p}|\ln\rho|.
\]
\end{prop}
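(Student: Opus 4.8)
The plan is to set up the standard fixed-point scheme on the Banach space $\calH$ equipped with the norm $\|\cdot\|$, exactly as outlined at the end of Section~\ref{sec:Ansatz}. First I would observe that solving $-\Delta(\Wr+\phi)=\rho f(\Wr+\phi)$ with $\phi\in\calH$ is equivalent, after writing $f(\Wr+\phi)=f(\Wr)+f'(\Wr)\phi+[f(\Wr+\phi)-f(\Wr)-f'(\Wr)\phi]$ and using the definitions of $\Rrho$, $\Srho$, $\Nrho$ in \eqref{def:R}--\eqref{def:RSN}, to the equation $\Lrho\phi=\Nrho(\phi)+\Srho\phi+\Rrho$; by Proposition~\ref{prop:lineartheory} the operator $\Lrho$ is invertible on $\calH$ for $\rho$ small, so this reads $\phi=\mathcal T(\phi):=\Lrho^{-1}(\Nrho(\phi)+\Srho\phi+\Rrho)$. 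One must check that $\mathcal T$ maps $\calH$ into itself: here the symmetry of $\Wr$ (it is a sum of projected radial bubbles, hence radial, hence satisfies \eqref{def:phisymm}) together with the fact that $\Lrho^{-1}$ preserves $\calH$ (Proposition~\ref{prop:lineartheory}) and that $f$ is applied pointwise guarantees $\mathcal T(\phi)\in\calH$ whenever $\phi\in\calH$.

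Next I would show $\mathcal T$ is a contraction on the ball $B_R:=\{\phi\in\calH:\|\phi\|\le R\rho^{\overline\beta_p}|\ln\rho|\}$ for a suitable fixed $R$ and $p>1$ close to $1$. The estimate on $\Lrho^{-1}$ from \eqref{eq:linearest} gives, for any $\phi,\phi_1,\phi_2\in B_R$,
\[
\|\mathcal T(\phi)\|\le c|\ln\rho|\big(\|\Nrho(\phi)\|_p+\|\Srho\phi\|_p+\|\Rrho\|_p\big),
\]
\[
\|\mathcal T(\phi_1)-\mathcal T(\phi_2)\|\le c|\ln\rho|\big(\|\Nrho(\phi_1)-\Nrho(\phi_2)\|_p+\|\Srho(\phi_1-\phi_2)\|_p\big).
\]
For the $\Rrho$ term, Proposition~\ref{prop:RS} gives $\|\Rrho\|_p=O(\rho^{\overline\beta_p})$, so $c|\ln\rho|\|\Rrho\|_p=O(\rho^{\overline\beta_p}|\ln\rho|)$, which is $\le \frac12 R\rho^{\overline\beta_p}|\ln\rho|$ once $R$ is chosen large. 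For the $\Srho\phi$ term, one needs H\"older: $\|\Srho\phi\|_p\le\|\Srho\|_{q}\|\phi\|_{L^{q'}}$ with $\frac1q+\frac1{q'}=\frac1p$, then Sobolev $\|\phi\|_{L^{q'}}\le C\|\phi\|$; combined with $\|\Srho\|_{q}=O(\rho^{\overline\beta_q})$ from Proposition~\ref{prop:RS}, this contributes $O(\rho^{\overline\beta_q}|\ln\rho|)\|\phi\|=o(1)\|\phi\|$, hence is dominated. For the nonlinear term $\Nrho(\phi)=\rho[f(\Wr+\phi)-f(\Wr)-f'(\Wr)\phi]$, a second-order Taylor expansion of $f$ gives pointwise $|\Nrho(\phi)|\le\rho\, C(e^{|\Wr|+|\phi|}+e^{\ga(|\Wr|+|\phi|)})\,\phi^2$; choosing $p$ close to $1$ and using the bounds on $\rho e^{\pm\ga\Wr}$ in $L^q$ from Corollary~\ref{cor:masses} (estimate \eqref{est:q}) together with a Moser--Trudinger / exponential integrability bound to absorb $e^{c|\phi|}$ when $\|\phi\|$ is small, one gets $\|\Nrho(\phi)\|_p=o(1)\|\phi\|^2$; a similar Lipschitz estimate handles $\|\Nrho(\phi_1)-\Nrho(\phi_2)\|_p\le o(1)(\|\phi_1\|+\|\phi_2\|)\|\phi_1-\phi_2\|$. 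Putting these together: on $B_R$ one has $\|\mathcal T(\phi)\|\le\frac12 R\rho^{\overline\beta_p}|\ln\rho|+o(1)\cdot R\rho^{\overline\beta_p}|\ln\rho|\le R\rho^{\overline\beta_p}|\ln\rho|$ and $\|\mathcal T(\phi_1)-\mathcal T(\phi_2)\|\le\frac12\|\phi_1-\phi_2\|$ for $\rho$ small, so Banach's fixed point theorem yields a unique $\phir\in B_R$. Elliptic regularity promotes $\phir$ to $W^{2,p}(\Omega)$, and $\ur=\Wr+\phir$ solves \eqref{eq:pb}.

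The main obstacle I expect is the estimate of the nonlinear remainder $\Nrho(\phi)$ in $L^p$. One must exploit the exponential structure carefully: the bubble tower makes $\Wr$ unbounded (it blows up logarithmically near the origin at rate governed by the $\ai$'s), so $e^{|\Wr|}$ and $e^{\ga|\Wr|}$ are large, and the gain must come entirely from the factor $\rho$ and from the smallness of $\|\phi\|$, balanced against the $L^q$-growth rates $O(\rho^{2s_k(1-q)})$ of $\rho e^{\pm\ga\Wr}$ quantified in \eqref{est:q}. Concretely one writes $\Nrho(\phi)=\rho\int_0^1(1-t)f''(\Wr+t\phi)\,dt\,\phi^2$, bounds $|f''|\le e^{\Wr+|\phi|}+\ga^2\tau e^{-\ga(\Wr-|\phi|)}$, applies H\"older with three exponents to split off $\rho e^{\pm\ga\Wr}$ (controlled by \eqref{est:q}), $e^{\pm c|\phi|}$ (controlled by Moser--Trudinger, using $\|\phi\|\le R\rho^{\overline\beta_p}|\ln\rho|\to0$ so the exponential is $1+o(1)$ in every $L^s$), and $\phi^2$ (controlled by $\|\phi\|^2$ via Sobolev). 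The bookkeeping that shows the resulting power of $\rho$ beats $\rho^{-2s_k(q-1)}$ with room to spare — i.e.\ that $\overline\beta_p$ can be taken strictly positive simultaneously with all the H\"older exponents being admissible — is the delicate point, and it is precisely here that one must choose $p>1$ (and the auxiliary exponents) sufficiently close to $1$; the choice of the final exponent $\overline\beta_p=\overline\beta_p(\tau,\ga,k)$ in the statement is dictated by taking the minimum of the exponents arising from $\Rrho$, $\Srho$ and $\Nrho$ in these estimates.
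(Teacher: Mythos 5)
Your proposal is correct and follows essentially the same route as the paper: the same fixed-point operator $\Trho=\Lrho^{-1}(\Nrho(\cdot)+\Srho\cdot+\Rrho)$ on the same ball $\{\|\phi\|\le R\rho^{\overline\beta_p}|\ln\rho|\}$, with the $\Rrho$ and $\Srho$ terms handled by Proposition~\ref{prop:RS} plus H\"older, and the nonlinear remainder controlled by a three-exponent H\"older inequality combining \eqref{est:q} with Moser--Trudinger (the paper uses the elementary bound $|e^a-e^b-a+b|\le e^{|a|+|b|}|a-b|(|a|+|b|)$ where you use the integral form of Taylor's remainder, which is equivalent). The only imprecision is the assertion that $\Wr$ is radial: the projections $P\wdai$ are not radial for a general symmetric $\Omega$, but they do satisfy \eqref{def:phisymm} because the projection onto $H_0^1(\Omega)$ commutes with the isometries in \eqref{assumpt:Omega}, which is all that is needed for $\Trho$ to preserve $\calH$.
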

Here $\calH$ is the space of $\ga$-symmetric Sobolev functions defined in \eqref{def:H}
and $\bar\beta_p>0$ is the exponent obtained in Proposition~\ref{prop:RS}.
\par
We equivalently seek a fixed point $\phi\in\calH$ for the operator 
$\Trho:\calH\to\calH$ defined by
\[
\Trho(\phi)=(\Lrho)^{-1}(\Nrho(\phi)+\Srho\phi+\Rrho),
\]
where $\Rrho,\Srho,\Nrho$ are the operators defined in \eqref{def:R}--\eqref{def:RSN}.
\par
In the sequel we shall use the Moser-Trudinger inequality \cite{Moser,Trudinger}
in the following form.
\begin{lemma}
\label{lem:MT}
There exists $c>0$ such that for any bounded domain $\Omega\subset\mathbb R^2$
there holds
\[
\int_\Omega e^{4\pi u^2/\|u\|^2}\,dx\le c|\Omega|,\quad\forall u\in H_0^1(\Omega).
\]
In particular, there exists $c>0$ such that for any $\eta\in\mathbb R$
\[
\int_\Omega e^{\eta u}\,dx\le c|\Omega|e^{\frac{\eta^2}{16\pi}\|u\|^2},
\quad\forall u\in H_0^1(\Omega).
\]
\end{lemma}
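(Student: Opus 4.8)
The plan is to prove the two assertions in turn, reducing the second to the first.

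\emph{The first inequality} is the Moser--Trudinger inequality with sharp constant, and I would invoke \cite{Moser,Trudinger}. Should a proof be included, the standard route is: by homogeneity assume $u\not\equiv0$ and $\|u\|=1$; replace $u$ by the symmetric decreasing rearrangement $u^\ast$ of $|u|$ on the ball $\Omega^\ast$ of the same measure, so that $\|\nabla u^\ast\|_{L^2}\le1$ by the P\'olya--Szeg\H o inequality while equimeasurability gives $\int_\Omega e^{4\pi u^2}\,dx=\int_{\Omega^\ast}e^{4\pi (u^\ast)^2}\,dx\le\int_{\Omega^\ast}e^{4\pi(u^\ast)^2/\|\nabla u^\ast\|^2}\,dx$; after a dilation this reduces the claim to the estimate $\int_{B_1}e^{4\pi v^2/\|\nabla v\|^2}\,dx\le C$ with $C$ \emph{universal}, for radially decreasing $v\in H_0^1(B_1)$. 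Moser's logarithmic change of variables $r=e^{-t/2}$, $\varphi(t)=\sqrt{4\pi}\,v(e^{-t/2})$ then turns this into the one-dimensional statement: if $\varphi(0)=0$ and $\int_0^\infty\dot\varphi(t)^2\,dt\le1$, then $\int_0^\infty e^{\varphi(t)^2-t}\,dt\le C_0$ for an absolute constant $C_0$.

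\emph{The second inequality} follows from the first by completing the square. For $u\in H_0^1(\Omega)$ with $u\not\equiv0$ and $\eta\in\mathbb R$, Young's inequality gives pointwise
\[
\eta u\le|\eta u|\le\frac{4\pi u^2}{\|u\|^2}+\frac{\eta^2}{16\pi}\|u\|^2,
\]
since the right-hand side minus $|\eta u|$ equals $\bigl(\tfrac{\sqrt{4\pi}\,|u|}{\|u\|}-\tfrac{|\eta|\,\|u\|}{4\sqrt\pi}\bigr)^2\ge0$. Exponentiating and integrating,
\[
\int_\Omega e^{\eta u}\,dx\le e^{\frac{\eta^2}{16\pi}\|u\|^2}\int_\Omega e^{4\pi u^2/\|u\|^2}\,dx\le c\,|\Omega|\,e^{\frac{\eta^2}{16\pi}\|u\|^2},
\]
by the first part; the case $u\equiv0$ is trivial, both sides being $|\Omega|$ up to the constant $c\ge1$.

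\emph{Main obstacle.} The only nonroutine point is the one-dimensional lemma above with a constant that does not degenerate as $\int_0^\infty\dot\varphi^2$ approaches $1$: the naive bound $\varphi(t)^2\le t\int_0^t\dot\varphi^2\le t$ from Cauchy--Schwarz yields only $e^{\varphi(t)^2-t}\le1$, which is not integrable, so one needs Moser's refined decomposition of $[0,\infty)$ to recover integrability with a universal bound; this is the content of \cite{Moser}. If one is content with a nonsharp constant in place of $4\pi$ (hence of $16\pi$) --- which is all that Section~\ref{sec:fixpoint} actually requires --- Trudinger's original argument \cite{Trudinger} suffices and sidesteps this issue.
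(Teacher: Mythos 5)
Your proof is correct. The paper offers no proof of this lemma, simply citing \cite{Moser,Trudinger} for the sharp inequality $\int_\Omega e^{4\pi u^2/\|u\|^2}\le c|\Omega|$; your derivation of the second inequality from the first by completing the square, i.e.\ $|\eta u|\le 4\pi u^2/\|u\|^2+\tfrac{\eta^2}{16\pi}\|u\|^2$ pointwise, is the standard and evidently intended argument, and your observation that the application in Section~\ref{sec:fixpoint} would survive with a nonsharp constant is also accurate.
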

\begin{lemma}
\label{lem:Nest}
For any $p\ge1$ and $r>1$ there exists $\rho_0>0$ and $c_1,c_2>0$ such that
for any $\rho\in(0,\rho_0)$ we have
\begin{equation}
\label{est:N1}
\|\Nrho(\phi)\|_p\le c_1e^{c_2\|\phi\|^2}\rho^{2s_k\frac{1-pr}{pr}}\|\phi\|^2
\end{equation}
for all $\phi\in H_0^1(\Omega)$ and
\begin{equation}
\label{est:N2}
\|\Nrho(\phi_1)-\Nrho(\phi_2)\|_p\le c_1e^{c_2(\|\phi_1\|^2+\|\phi_1\|^2)}\rho^{2s_k\frac{1-pr}{pr}}
\|\phi_1-\phi_2\|(\|\phi_1\|+\|\phi_2\|),
\end{equation}
for all $\phi_1,\phi_2\in H_0^1(\Omega)$.
\end{lemma}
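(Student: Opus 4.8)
The plan is to reduce both estimates to elementary scalar inequalities for the Taylor remainder of $f(t)=e^t-\tau e^{-\ga t}$, and then to apply H\"older's inequality so that the factors $\rho e^{\Wr}$ and $\rho\tau e^{-\ga\Wr}$ appear isolated to the power $pr$: the power of $\rho$ is then supplied by the quantization estimate \eqref{est:q} in Corollary~\ref{cor:masses}, the polynomial factors in $\phi$ by the Sobolev embedding $H_0^1(\Omega)\hookrightarrow L^q(\Omega)$ (valid for every finite $q$ in dimension two), and the exponential factors in $\phi$ by the Moser--Trudinger inequality of Lemma~\ref{lem:MT}.

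First I would record the scalar bounds $|e^t-1-t|\le\tfrac12 t^2 e^{|t|}$ and $|e^t-1|\le|t|e^{|t|}$, valid for all $t\in\rr$. Writing
\[
\Nrho(\phi)=\rho e^{\Wr}\bigl(e^{\phi}-1-\phi\bigr)-\rho\tau e^{-\ga\Wr}\bigl(e^{-\ga\phi}-1+\ga\phi\bigr),
\]
the first bound (used with $t=\phi$ and with $t=-\ga\phi$, recalling $\ga\in(0,1]$) gives the pointwise estimate
\[
|\Nrho(\phi)|\le\tfrac12\rho e^{\Wr}\,\phi^2 e^{|\phi|}+\tfrac12\ga^2\rho\tau e^{-\ga\Wr}\,\phi^2 e^{\ga|\phi|}.
\]
To estimate $\|\rho e^{\Wr}\phi^2 e^{|\phi|}\|_p$ I would fix $p\ge1$, $r>1$ and pick auxiliary exponents $b,c>1$ with $\tfrac{1}{r}+\tfrac{1}{b}+\tfrac{1}{c}=1$ (possible since $\tfrac{1}{r}<1$); H\"older's inequality then yields
\[
\|\rho e^{\Wr}\phi^2 e^{|\phi|}\|_p\le\Bigl(\int_\Omega(\rho e^{\Wr})^{pr}\Bigr)^{\frac{1}{pr}}\,\|\phi\|_{L^{2pb}(\Omega)}^2\,\Bigl(\int_\Omega e^{pc|\phi|}\Bigr)^{\frac{1}{pc}}.
\]
Since $pr>1$, \eqref{est:q} bounds the first factor by $O(\rho^{2s_k(1-pr)/(pr)})$; the Sobolev embedding bounds $\|\phi\|_{L^{2pb}(\Omega)}\le C\|\phi\|$; and Lemma~\ref{lem:MT} (applied to $u=|\phi|$, which belongs to $H_0^1(\Omega)$ with $\||\phi|\|=\|\phi\|$) gives $\bigl(\int_\Omega e^{pc|\phi|}\bigr)^{1/(pc)}\le Ce^{pc\|\phi\|^2/(16\pi)}$. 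The term with $e^{-\ga\Wr}$ is treated in exactly the same way, using the second estimate in \eqref{est:q}; collecting the constants into $c_1$ and $c_2$ produces \eqref{est:N1}.

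For \eqref{est:N2} I would first express the difference by the fundamental theorem of calculus:
\[
\Nrho(\phi_1)-\Nrho(\phi_2)=\rho(\phi_1-\phi_2)\int_0^1\Bigl[f'\bigl(\Wr+\phi_2+t(\phi_1-\phi_2)\bigr)-f'(\Wr)\Bigr]\,dt.
\]
Using $f'(t)=e^t+\tau\ga e^{-\ga t}$, the bound $|e^s-1|\le|s|e^{|s|}$, and $|\phi_2+t(\phi_1-\phi_2)|\le|\phi_1|+|\phi_2|$, I get
\[
|\Nrho(\phi_1)-\Nrho(\phi_2)|\le\rho|\phi_1-\phi_2|\,(|\phi_1|+|\phi_2|)\Bigl[e^{\Wr}e^{|\phi_1|+|\phi_2|}+\tau\ga^2e^{-\ga\Wr}e^{\ga(|\phi_1|+|\phi_2|)}\Bigr].
\]
Now the same H\"older scheme applies, this time with four factors: $\rho e^{\Wr}$ to the power $pr$, then $|\phi_1-\phi_2|$, then $|\phi_1|+|\phi_2|$, and finally $e^{|\phi_1|+|\phi_2|}$, with finite exponents whose reciprocals sum to $1-\tfrac{1}{r}$. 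Then \eqref{est:q} gives the $\rho$-power, the Sobolev embedding gives $\|\phi_1-\phi_2\|$ and $\|\phi_1\|+\|\phi_2\|$, and for the exponential factor I would use $e^{a+b}\le\tfrac12(e^{2a}+e^{2b})$ followed by Lemma~\ref{lem:MT} to obtain a factor $Ce^{c_2(\|\phi_1\|^2+\|\phi_2\|^2)}$; combining yields \eqref{est:N2}. The only delicate point throughout is the bookkeeping of H\"older exponents: one must set the exponent on $\rho e^{\Wr}$ to be exactly $pr$ so that \eqref{est:q} returns precisely $\rho^{2s_k(1-pr)/(pr)}$, while keeping the remaining exponents finite so that Sobolev and Moser--Trudinger remain applicable. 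Since $r>1$ there is always enough room, and since $pr>1$ the estimate \eqref{est:q} is always in force, so no additional obstruction arises.
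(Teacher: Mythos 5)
Your proof is correct and follows essentially the same route as the paper's: the same splitting of $\Nrho$ into the $\rho e^{\Wr}$ and $\rho\tau e^{-\ga\Wr}$ pieces, elementary Taylor-remainder bounds on the scalar nonlinearity, a H\"older decomposition isolating $(\rho e^{\Wr})^{pr}$ so that \eqref{est:q} supplies the power of $\rho$, and Sobolev plus Moser--Trudinger (Lemma~\ref{lem:MT}) for the polynomial and exponential factors in $\phi$. The only cosmetic differences are that the paper proves \eqref{est:N2} first via the two-point inequality $|e^a-e^b-a+b|\le e^{|a|+|b|}|a-b|(|a|+|b|)$ and obtains \eqref{est:N1} by setting $\phi_2=0$, whereas you prove \eqref{est:N1} directly and represent the difference in \eqref{est:N2} by the fundamental theorem of calculus; both variants are equivalent.
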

Here, $s_k>0$ is the constant defined in \eqref{def:sj}.
\begin{proof}[Proof of Lemma~\ref{lem:Nest}]
Since \eqref{est:N1} follows from \eqref{est:N2} by taking $\phi_2=0$, 
it suffices to prove \eqref{est:N2}.
\par
We readily check that
\[
f(t+s)-f(t)-f'(t)s
=e^t(e^s-1-s)-\tau e^{-\ga t}(e^{-\ga s}-1+\ga s).
\]
Consequently,
\begin{equation*}
\Nrho(\phi_1)-\Nrho(\phi_2)
=\rho e^{\Wr}[e^{\phi_1}-e^{\phi_2}-(\phi_1-\phi_2)]
-\tau e^{-\ga\Wr}[e^{-\ga\phi_1}-e^{-\ga\phi_2}+\ga(\phi_1-\phi_2)].
\end{equation*}
Using the Mean Value Theorem, we have
\begin{equation}
\label{eq:MVT}
|e^a-e^b-a+b|\le e^{|a|+|b|}|a-b|(|a|+|b|),
\end{equation}
for all $a,b\in\mathbb R$.
Taking $a=\phi_1$, $b=\phi_2$ we derive
\[
|e^{\phi_1}-e^{\phi_2}-(\phi_1-\phi_2)|\le e^{|\phi_1|+|\phi_2|}|\phi_1-\phi_2|(|\phi_1|+|\phi_2|).
\]
Setting
\[
I_1:=\rho e^{\Wr}(e^{\phi_1}-e^{\phi_2}-(\phi_1-\phi_2)),
\]
we estimate:
\begin{equation*}
\begin{aligned}
\|I_1\|_p=&\left(\int_\Omega\rho^pe^{p\Wr}|e^{\phi_1}-e^{\phi_2}-\phi_1+\phi_2|^p\,dx\right)^{1/p}\\
\le&\sum_{j=1}^2\left(\int_\Omega\rho^pe^{p\Wr}e^{p|\phi_1|+p|\phi_2|}|\phi_1-\phi_2|^p|\phi_j|^p\,dx\right)^{1/p}.
\end{aligned}
\end{equation*}
By H\"older's inequality with $r^{-1}+s^{-1}+t^{-1}=1$ we obtain
\[
\|I_1\|_p\le C\sum_{j=1}^2\left(\int_\Omega\rho^{pr}e^{pr\Wr}\right)^{\frac{1}{pr}}
\left(\int_\Omega e^{ps(|\phi_1|+|\phi_2|)}\,dx\right)^{\frac{1}{ps}}
\left(\int_\Omega|\phi_1-\phi_2|^{pt}|\phi_j|^{pt}\,dx\right)^{\frac{1}{pt}}.
\]
In view of \eqref{est:q} we have
\[
\left(\int_\Omega(\rho^{pr}e^{pr\Wr})\,dx\right)^{1/(pr)}=O(\rho^{2s_k\frac{1-pr}{pr}}),
\]
where $s_k>0$ is defined in \eqref{def:sj}.
Now, the Moser-Trudinger inequality as in Lemma~\ref{lem:MT} yields
\[
\begin{aligned}
&\int_\Omega e^{ps(|\phi_1|+|\phi_2|)}\,dx\le Ce^{(ps)^2/(8\pi)(\|\phi_1\|^2+\|\phi_2\|^2)}\\
&\left(\int_\Omega|\phi_1-\phi_2|^{pt}|\phi_j|^{pt}\,dx\right)^{\frac{1}{pt}}
\le C\|\phi_1-\phi_2\|\,\|\phi_j\|.
\end{aligned}
\]
We conclude that
\[
\|I_1\|_p\le C\rho^{2s_k\frac{1-pr}{pr}}
e^{(ps)^2/(8\pi)(\|\phi_1\|^2+\|\phi_2\|^2)}\|\phi_1-\phi_2\|\,(\|\phi_1\|+\|\phi_2\|).
\]
Let $I_2$ be defined by
\[
I_2:=e^{-\ga\Wr}[e^{-\ga\phi_1}-e^{-\ga\phi_2}+\ga(\phi_1-\phi_2)].
\]
Taking $a=-\ga\phi_1$, $b=-\ga\phi_2$ in \eqref{eq:MVT} we derive
\[
|e^{-\ga\phi_1}-e^{-\ga\phi_2}+\ga(\phi_1-\phi_2)|
\le\ga^2e^{\ga(|\phi_1|+|\phi_2|)}|\phi_1-\phi_2|(|\phi_1|+|\phi_2|).
\]
Hence, by analogous estimates as above, we conclude the proof of the desired estimates.
\end{proof}
Now we can prove the main result of this section.
\begin{proof}[Proof of Proposition~\ref{prop:fixedpoint}]
Let 
\[
B_{\rho,R}:=\left\{\phi\in\calH:\|\phi\|\le R\rho^{\overline\beta_p}|\ln\rho|\right\}.
\]
We shall prove that $\Trho$ is a contraction mapping in $B_{\rho,R}$,
provided $R>0$ is sufficiently large and $\rho>0$ is sufficiently small.
\par
\textit{Claim~1.}
$\Trho$ maps $B_{\rho,R}$ into itself.
\par
Equivalently, we claim that
\[
\|\phi\|\le R\rho^{\overline\beta_p}|\ln\rho|
\Longrightarrow\|\Trho(\phi)\|\le R\rho^{\overline\beta_p}|\ln\rho|.
\]
Indeed, we have
\[
\begin{aligned}
\|\Trho(\phi)\|\le&\|\Lrho^{-1}\|(\|\Nrho(\phi)\|_p+\|\Srho\phi\|_p+\|\Rrho\|_p)\\
\le&C|\ln\rho|\left(\|\phi\|^2e^{c_2\|\phi\|^2}\rho^{2s_k\frac{1-pr}{pr}}+\|\Srho\|_{pq}\|\phi\|_{pq'}+\rho^{\overline\beta_p}\right)\\
\le&C|\ln\rho|\left(\|\phi\|^2e^{c_2\|\phi\|^2}\rho^{2s_k\frac{1-pr}{pr}}+\rho^{\overline\beta_{pq}}\|\phi\|+\rho^{\overline\beta_p}\right)\\
\le&R\rho^{\overline\beta_p}|\ln\rho|.
\end{aligned}
\]
\par
\textit{Claim~2.}
$\Trho$ is a contraction in $B_{\rho,R}$.
\par
Equivalently, we claim that there exists $L<1$ such that
\[
\|\phi_1\|, \|\phi_2\|\le R\rho^{\overline\beta_p}|\ln\rho|
\Longrightarrow
\|\Trho(\phi_1)-\Trho(\phi_2)\|\le L\|\phi_1-\phi_2\|.
\]
Indeed, we have
\[
\begin{aligned}
\|\Trho(\phi_1)-\Trho(\phi_2)\|\le&\|\Lrho^{-1}\|(\|\Nrho(\phi_1)-\Nrho(\phi_2)\|_p+\|\Srho(\phi_1-\phi_2)\|_p)\\
\le&C|\ln\rho|(\|\Nrho(\phi_1)-\Nrho(\phi_2)\|_p+\|\Srho(\phi_1-\phi_2)\|_p).
\end{aligned}
\]
We estimate:
\[
\begin{aligned}
\|\Nrho(\phi_1)-\Nrho(\phi_2)\|_p\le&c_1e^{c_2(\|\phi_1\|^2+\|\phi_2\|^2)}\rho^{2s_k\frac{1-pr}{pr}}
\|\phi_1-\phi_2\|(\|\phi_1\|+\|\phi_2\|)\\
\le&c_1e^{2c_2R^2}2R\rho^{\overline\beta_p}|\ln\rho|\rho^{2s_k\frac{1-pr}{pr}}\|\phi_1-\phi_2\|\\
\le&C\rho^{\overline\beta_p/2}\|\phi_1-\phi_2\|
\end{aligned}
\]
provided $p,r>1$ are sufficiently close to 1 and $\rho>0$ is sufficiently small.
\par
On the other hand, we have
\[
\begin{aligned}
\|\Srho(\phi_1-\phi_2)\|_p\le\|\Srho\|_{pq}\|\phi_1-\phi_2\|_{pq'}\le C\rho^{\overline\beta_{pq}}\|\phi_1-\phi_2\|,
\end{aligned}
\]
where $\frac{1}{q}+\frac{1}{q'}=1$.
We conclude that $\Trho$ is a contraction in $B_{\rho,R}$.
\par
Now the existence of $\phir$ follows by the Banach Contraction Principle.
\end{proof}
\begin{lemma}
\label{lem:totalmass}
There holds
\[
\Wr(x)=\calMk G(x,0)+o(1),
\]
uniformly on compact subsets of $\Omega\setminus\{0\}$,
where $\calMk$ is defined in \eqref{def:Mk}.
\end{lemma}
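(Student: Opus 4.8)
The plan is to expand each projected bubble $P\wi$ by means of Lemma~\ref{lem:proj}, to exploit the fact that on a compact set away from the origin all the bubbles are negligible, and finally to identify the surviving coefficient of $G(\cdot,0)$ with $\calMk$ by means of Proposition~\ref{prop:aimain}.

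First I would fix a compact set $K\subset\Omega\setminus\{0\}$ and set $c_0:=\operatorname{dist}(K,\{0\})>0$. For $x\in K$ and each $i=1,2,\ldots,k$ we have $|x|\ge c_0$, while by \eqref{eq:deltadecay1} $\dei^{\ai}=c_i\rho^{r_i}\to0$ as $\rho\to0^+$; hence
\[
0\le\ln\frac{\dei^{\ai}+|x|^{\ai}}{|x|^{\ai}}=\ln\Big(1+\frac{\dei^{\ai}}{|x|^{\ai}}\Big)\le\frac{\dei^{\ai}}{c_0^{\ai}}\to0
\]
uniformly in $x\in K$. Inserting this together with $\dei^{\ai}\to0$ into the second line of the expansion~\eqref{eq:proj} gives
\[
P\wi(x)=-2\ai\ln|x|+4\pi\ai H(x,0)+o(1)\qquad\mbox{uniformly on }K.
\]
Recalling from \eqref{def:regG} that $4\pi G(x,0)=-2\ln|x|+4\pi H(x,0)$, this can be rewritten as $P\wi(x)=4\pi\ai\,G(x,0)+o(1)$ uniformly on $K$, for every $i=1,2,\ldots,k$.

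Since $k$ is fixed, I would then sum these $k$ expansions: from the definition~\eqref{def:Wr} of $\Wr$,
\[
\Wr(x)=\sum_{i=1}^k\frac{(-1)^{i-1}}{\ga^{\si}}P\wi(x)
=4\pi\Big(\sum_{i=1}^k\frac{(-1)^{i-1}}{\ga^{\si}}\ai\Big)G(x,0)+o(1)
=-4\pi\Akcal\,G(x,0)+o(1)
\]
uniformly on $K$, where $\Akcal$ is the quantity defined in \eqref{def:Ak}. Finally, the identity~\eqref{eq:suma} together with the definition~\eqref{def:Mk} of $\calMk$ gives $-4\pi\Akcal=\calMk$ in both the odd and the even case, which yields the claim.

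The argument is essentially bookkeeping; the only point that needs attention is the uniformity of the error over $K$, which rests on the lower bound $|x|\ge c_0$ that makes both the $O(\dei^{\ai})$ remainder in \eqref{eq:proj} and the logarithmic discrepancy above vanish uniformly. Because there are only finitely many indices $i$, the accumulated error is still $o(1)$, so no genuine difficulty arises beyond this.
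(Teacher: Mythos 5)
Your proposal is correct and follows essentially the same route as the paper: both expand each $P\wi$ via Lemma~\ref{lem:proj}, observe that $2\ln\frac{|x|^{\ai}}{\dei^{\ai}+|x|^{\ai}}=o(1)$ uniformly on compact subsets of $\Omega\setminus\{0\}$ so that $P\wi(x)=4\pi\ai G(x,0)+o(1)$, and then sum and identify $-4\pi\Akcal$ with $\calMk$ via \eqref{eq:suma}. The only difference is that you spell out the uniformity of the remainder in more detail than the paper does.
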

\begin{proof}
In view of \eqref{eq:proj}, we have
\begin{equation*}
P w_\de^\al(x)-4\pi\al G(x,0)=2\ln\frac{|x|^\al}{\de^\al+|x|^\al}+O(\de^\al)=O(\de^\al),
\end{equation*} 
uniformly on compact subsets of $\Omega\setminus\{0\}$.   
It follows that
\begin{equation*}
\Wr(x)=-4\pi\sum_{i=1}^k\frac{(-1)^i}{\ga^{\si}}\ai G(x,0)+o(1)
=\calMk G(x,0)+o(1),
\end{equation*} 
as asserted.                     
\end{proof}
Finally, we are able to provide the proof of our main result.
\begin{proof}[Proof of Theorem~\ref{thm:main}]
For $p>1$ sufficiently close to 1 let $\rho_0>0$ be chosen as in Proposition~\ref{prop:fixedpoint}.
For all $\rho\in(0,\rho_0)$ we obtain by Proposition~\ref{prop:fixedpoint} a solution $\ur=\Wr+\phir\in\calH$
to problem~\eqref{eq:pb} satisfying $\|\phir\|\le R\rho^{\overline\beta}|\ln\rho|$
with $\overline\beta=\overline\beta_p$.
In view of Ansatz~\eqref{def:Wr}, and taking into account Lemma~\ref{lem:totalmass}, we conclude that $\ur$
has the desired concentration properties.
\end{proof}
\section{Appendix}
We collect in this Appendix the proof of some complementary results stated in Section~\ref{sec:intro},
as well as some proofs.
\subsection{Remarks on the blow-up masses}
We denote by $\omega(x)=H(x,x)$ the Robin's function, where $H(x,y)$ is the regular part of the Green's function
as defined in \eqref{def:regG}.
We first provide a simple proof of identity~\eqref{eq:massid}.
We note that the proof of identity~\eqref{eq:massid} may be also deduced as a special case of an identity
involving probability measures established in \cite{ORS}.
\begin{lemma}
\label{lem:blowup}
Let $\ur$ be a solution sequence for \eqref{eq:pb}.
Suppose 
\[
\rho e^{\ur}\stackrel{\ast}{\rightharpoonup}m_+(x_0)\de_{x_0},
\qquad\qquad
\rho\tau e^{-\ga\ur}\stackrel{\ast}{\rightharpoonup}m_-(x_0)\de_{x_0}
\] 
for some $x_0\in\Omega$, weakly in the sense of measures, as $\rho\to0^+$.
Then,
\[
8\pi(m_+(x_0)+\frac{m_-(x_0)}{\ga})=(m_+(x_0)-m_-(x_0))^2
\]
and $x_0$ is a critical point for $\omega(x)$.
\end{lemma}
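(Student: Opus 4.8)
The plan is to deduce both assertions from Pohozaev-type identities on a small ball $B_r:=B_r(x_0)$, passing to the limit first as $\rho\to0^+$ and then as $r\to0^+$. Write $f(t)=e^t-\tau e^{-\ga t}$ and $F(t):=\int_0^t f(s)\,ds=e^t+\tfrac{\tau}{\ga}e^{-\ga t}-\big(1+\tfrac{\tau}{\ga}\big)$, so $F(0)=0$. From $-\Delta\ur=\rho f(\ur)$ and the assumed convergences $\rho e^{\ur}\,dx\stackrel{\ast}{\rightharpoonup}m_+(x_0)\de_{x_0}$, $\rho\tau e^{-\ga\ur}\,dx\stackrel{\ast}{\rightharpoonup}m_-(x_0)\de_{x_0}$, we get $-\Delta\ur\stackrel{\ast}{\rightharpoonup}\mathcal M\de_{x_0}$ with $\mathcal M:=m_+(x_0)-m_-(x_0)$; since $\ur=0$ on $\partial\Omega$, standard elliptic estimates away from the blow-up point give $\ur\to\mathcal M\,G(\cdot,x_0)$ in $C^1_{\mathrm{loc}}(\Omega\setminus\{x_0\})$. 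Consequently $\rho F(\ur)\to0$ uniformly on each $\partial B_r$, while $\int_{B_r}\rho F(\ur)\,dx\to m_+(x_0)+\tfrac{m_-(x_0)}{\ga}$ (the additive constant in $F$ contributing only $O(\rho)$). These three convergence facts are all we need; the first, equivalently the local boundedness of $\ur$ away from $x_0$, is the only ingredient that uses the nonlinear structure of the equation and is the step I expect to require care (it is handled by the usual Brezis--Merle / Harnack-type arguments applied to the right-hand side $\rho f(\ur)$).

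For the mass identity, multiply $-\Delta\ur=\rho f(\ur)$ by $(x-x_0)\cdot\nabla\ur$ and integrate over $B_r$; integration by parts yields the two-dimensional Pohozaev identity
\[
\tfrac r2\int_{\partial B_r}|\nabla\ur|^2\,d\sigma-\int_{\partial B_r}(\partial_\nu\ur)\big((x-x_0)\cdot\nabla\ur\big)\,d\sigma=r\int_{\partial B_r}\rho F(\ur)\,d\sigma-2\int_{B_r}\rho F(\ur)\,dx .
\]
Letting $\rho\to0^+$, the right-hand side tends to $-2\big(m_+(x_0)+\tfrac{m_-(x_0)}{\ga}\big)$ and on the left $\ur$ may be replaced by $\mathcal M\,G(\cdot,x_0)$. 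Now insert $G(x,x_0)=\tfrac1{2\pi}\ln\tfrac1{|x-x_0|}+H(x,x_0)$, expand $H(\cdot,x_0)$ (harmonic near $x_0$, value $\omega(x_0)$ at $x_0$) in Taylor series about $x_0$, and let $r\to0^+$: using $|\partial B_r|=2\pi r$ and $\int_{\partial B_r}(x-x_0)\,d\sigma=0$, all cross terms and the purely regular contributions are $o(1)$, and only the self-interaction of the logarithmic part survives, producing $-\mathcal M^2/(4\pi)$ on the left. Hence $-\mathcal M^2/(4\pi)=-2\big(m_+(x_0)+\tfrac{m_-(x_0)}{\ga}\big)$, which is exactly $8\pi\big(m_+(x_0)+\tfrac{m_-(x_0)}{\ga}\big)=\big(m_+(x_0)-m_-(x_0)\big)^2$.

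For the criticality of $x_0$, multiply the equation instead by $\partial_i\ur$ ($i=1,2$) and integrate over $B_r$, getting
\[
\tfrac12\int_{\partial B_r}|\nabla\ur|^2\,\nu_i\,d\sigma-\int_{\partial B_r}(\partial_\nu\ur)\,\partial_i\ur\,d\sigma=\int_{\partial B_r}\rho F(\ur)\,\nu_i\,d\sigma .
\]
Passing to the limit $\rho\to0^+$ kills the right-hand side and replaces $\ur$ by $\mathcal M\,G(\cdot,x_0)$, so $\mathcal M^2\int_{\partial B_r}\big(\partial_\nu G\,\partial_i G-\tfrac12|\nabla G|^2\nu_i\big)\,d\sigma=0$ for all small $r$. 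Since $G(\cdot,x_0)$ is harmonic in $B_r\setminus\{x_0\}$, the field $\nabla G\,\partial_i G-\tfrac12|\nabla G|^2e_i$ is divergence-free there, so this boundary integral does not depend on $r$ and equals its $r\to0^+$ limit; a short residue computation with $G(x,x_0)=\tfrac1{2\pi}\ln\tfrac1{|x-x_0|}+H(x,x_0)$ shows that this common value is a nonzero multiple of $\partial_{x_i}H(x,x_0)\big|_{x=x_0}$. Because $\mathcal M\neq0$ — if $\mathcal M=0$ the identity just proved would force $m_+(x_0)=m_-(x_0)=0$, i.e.\ no concentration — we conclude $\partial_{x_i}H(x,x_0)\big|_{x=x_0}=0$ for $i=1,2$; by symmetry of $H$ this gives $\nabla\omega(x_0)=2\,\nabla_xH(x,x_0)\big|_{x=x_0}=0$.

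Apart from this, the remaining work — establishing the two Pohozaev identities by integration by parts and carrying out the elementary Taylor expansions of $G$ near $x_0$ — is routine; as indicated, the only delicate point is the $C^1_{\mathrm{loc}}(\Omega\setminus\{x_0\})$ convergence of $\ur$, that is, the local boundedness of $\ur$ away from the blow-up point.
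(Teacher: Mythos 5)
Your argument is correct, but it follows a genuinely different route from the paper. The paper adapts Ye's complex--variable method: it forms $\mathcal S=\tfrac12 u_z^2+N_z\ast[\rho F(\ur)\chi_\Omega]_z$, observes that $\mathcal S_{\bar z}=0$ so that $\mathcal S$ is holomorphic, passes to the limit $\ur\to\calMk G(\cdot,x_0)$, and reads off \emph{both} conclusions at once from the requirement that the $z^{-2}$ and $z^{-1}$ Laurent coefficients of the limiting holomorphic function vanish. You instead derive the same two relations from the dilation (Pohozaev) and translation identities on small balls $B_r(x_0)$, sending first $\rho\to0^+$ and then $r\to0^+$. The two computations are equivalent in substance (the Laurent coefficients of $\mathcal S_0$ encode exactly your two boundary integrals), and both rest on the same unproved-in-detail ingredient, namely the $C^1_{\mathrm{loc}}(\Omega\setminus\{x_0\})$ convergence $\ur\to(m_+(x_0)-m_-(x_0))G(\cdot,x_0)$, which the paper also takes for granted and which you correctly flag as the only delicate step. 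Your approach is more elementary and real-variable, at the cost of two separate boundary-integral computations and the Taylor expansion of $H$; the paper's approach packages everything into one holomorphic object. A small point in your favour: you explicitly dispose of the degenerate case $m_+(x_0)-m_-(x_0)=0$ (which by the mass identity forces $m_\pm(x_0)=0$, i.e.\ no concentration) before dividing by $\calMk^2$ in the criticality step, whereas the paper leaves this implicit. Your normalization $F(0)=0$ and the observation that the additive constant contributes only $O(\rho)$ are also fine. I see no gap.
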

\begin{proof}
We adapt the argument in \cite{Ye}.
Without loss of generality, we may assume $x_0=0$.
We recall that $\calMk=m_+(0)-m_-(0)$.
Recall that 
$f(t)=e^t-\tau e^{-\ga t}$, $F(t)=e^t+\tau\ga^{-1}e^{-\ga t}$,
so that $F'(t)=f(t)$.
Then, as $\rho\to0^+$,
\[
\rho f(\ur)\stackrel{\ast}{\rightharpoonup}\calMk\de_{0},
\qquad\qquad
\rho F(\ur)\stackrel{\ast}{\rightharpoonup}(m_+(0)+\frac{m_-(0)}{\ga})\de_{0},
\]
weakly in the sense of measures.
We use the standard complex notation $z=x_1+\sqrt{-1}x_2$, 
$\partial_z=(\partial_{x_1}-\sqrt{-1}\partial_{x_2})/2$, $\partial_{\bar z}=(\partial_{x_1}+\sqrt{-1}\partial_{x_2})/2$,
$\partial_z\partial_{\bar z}=\Delta/4$.
We set 
\[
\mathcal H:=\frac{u_z^2}{2},
\qquad\qquad 
\mathcal K:=N_z\ast[\rho F(\ur)\chi_\Omega]_z,
\]
where $N(z,\bar z)=(4\pi)^{-1}\ln(z\bar z)$ is the Newtonian potential satisfying $\Delta N=\de_0$.
Then, $\mathcal S_{\bar z}=0$, i.e., $\mathcal S:=\mathcal H+\mathcal K$ is holomorphic in $\Omega$.
Hence, $\mathcal S$ converges uniformly to a holomorphic function $\mathcal S_0$ in $\Omega$
as $\rho\to0^+$.
Since $\ur\to\calMk G(x,0)$ in $W^{1,q}(\Omega)$ and uniformly in $\Omega\setminus\{0\}$,
we have $\mathcal H\to\mathcal H_0$,
where
\[
\mathcal H_0=\frac{u_{0,z}^2}{2}=\frac{\calMk^2}{2}G_z^2(x,0)=\frac{\calMk^2}{2}(N_z+H_z(x,0))^2.
\]
Since $N_z=(4\pi z)^{-1}$, we conclude that
\[
\mathcal H_0=\frac{\calMk^2}{32\pi^2z^2}+\frac{\calMk^2}{4\pi z}H_z(x,0)+\frac{\calMk^2}{2}H_z^2(x,0).
\]
On the other hand, we have
\[
\mathcal K:=N_{zz}\ast[\rho F(\ur)\chi_\Omega]=-\frac{1}{4\pi z^2}\ast[\rho F(\ur)\chi_\Omega].
\]
Taking limits, we find $\mathcal K\to\mathcal K_0$, where
\[
\mathcal K_0=-\frac{1}{4\pi z^2}\ast[(m_+(0)+\frac{m_-(0)}{\ga})\de_{0}]=-\frac{m_+(0)+\frac{m_-(0)}{\ga}}{4\pi z^2}.
\]
We conclude that
\[
\mathcal S_0=\frac{1}{4\pi z^2}\left[\frac{\calMk^2}{8\pi}-(m_+(0)+\frac{m_-(0)}{\ga})\right]
+\frac{\calMk^2}{4\pi z}H_z(x,0)+\frac{\calMk^2}{2}H_z^2(x,0).
\]
Since $\mathcal S_0$ is smooth in $\Omega$, we necessarily have
\[
\frac{\calMk^2}{8\pi}-(m_+(0)+\frac{m_-(0)}{\ga})=0
\qquad\qquad\mbox{and\ }H_z(x,0)|_{x_0}=0.
\]
The asserted identities follow.
\end{proof}
The following is a proof of \eqref{def:qkgi} in Proposition~\ref{prop:deimain}.
\begin{proof}[Proof of \eqref{def:qkgi}]
Suppose $k$ is odd.
We compute:
\begin{equation*}
\begin{aligned}
2q_j=&2(s_j-s_{j+1})
=\frac{(1+\ga)(k-j)+\ga}{(1+\ga)j-1}-\frac{(1+\ga)(k-j-1)+\ga}{(1+\ga)(j+1)-1}\\
=&\frac{[(1+\ga)(k-j)+\ga][(1+\ga)(j+1)-1]-[(1+\ga)(k-j-1)+\ga][(1+\ga)j-1]}{[(1+\ga)j-1][(1+\ga)(j+1)-1]}
\end{aligned}
\end{equation*}
Observing that $(1+\ga)(j+1)-1=(1+\ga)(j+1)+\ga$
and $(1+\ga)(k-j-1)+\ga=(1+\ga)(k-j)-1$
we find
\begin{align*}
[(1+\ga)&(k-j)+\ga][(1+\ga)(j+1)-1]-[(1+\ga)(k-j-1)+\ga][(1+\ga)j-1]\\
=&[(1+\ga)(k-j)+\ga][(1+\ga)(j+1)+\ga]-[(1+\ga)(k-j)-1][(1+\ga)j-1]\\
=&(1+\ga)[(1+\ga)k-1+\ga],
\end{align*}
and the statement follows for $k$ odd.
\par
Suppose $k$ is even.
We compute:
\begin{align*}
2q_j=&2(s_j-s_{j+1})
=\frac{(1+\ga)(k-j)+1}{(1+\ga)j-1}-\frac{(1+\ga)(k-j-1)+1}{(1+\ga)(j+1)-1}\\
=&\frac{[(1+\ga)(k-j)+1][(1+\ga)(j+1)-1]-[(1+\ga)(k-j-1)+1][(1+\ga)j-1]}{[(1+\ga)j-1][(1+\ga)(j+1)-1]}.
\end{align*}
Observing that $(1+\ga)(k-j-1)+1=(1+\ga)(k-j)-\ga$ and $(1+\ga)(j+1)-1=(1+\ga)j+\ga$,
we find
\begin{align*}
[(1+\ga)(k-j)+1][(1+\ga)(j+1)-1]-[(1+\ga)(k-j-1)+1][(1+\ga)j-1]=(1+\ga)^2k
\end{align*}
and the asserted decay rate follows.
\end{proof}
For the sake of completeness, we check the following fact which was stated in Section~\ref{sec:intro}.
\begin{rmk}
\label{rmk:massid}
The blow-up mass values $m_+(0),m_-(0)$, as defined in
\eqref{def:blowupmassodd}-- \eqref{def:blowupmasseven}, 
satisfy the mass identity \eqref{eq:massid}.
\end{rmk}
\begin{proof}
Throughout this proof, we set $\tmsp=\msp(0)/4\pi$, $\tmsm=\msm(0)/4\pi$.
Equivalently, we check that $\tmsp,\tmsm$ satisfy
\begin{equation}
\label{eq:redmassid}
2\left(\tmsp+\frac{\tmsm}{\ga}\right)=\left(\tmsp-\tmsm\right)^2
\end{equation}
Suppose $k$ is odd. Then,
\begin{equation*}
\begin{aligned}
\tmsp+\frac{\tmsm}{\ga}
=&\frac{1}{2}\left[(1+\frac{1}{\ga})k+1-\frac{1}{\ga}\right](k+1+\frac{1}{\ga}(k-1))\\
=&\frac{1}{2}\left[(1+\frac{1}{\ga})k+1-\frac{1}{\ga}\right]^2.
\end{aligned}
\end{equation*}
On the other hand,
\begin{equation*}
\tmsp-\tmsm=\left[(1+\frac{1}{\ga})k+1-\frac{1}{\ga}\right]
[\frac{k+1}{2}-\frac{k-1}{2}]=(1+\frac{1}{\ga})k+1-\frac{1}{\ga}.
\end{equation*}
Hence, \eqref{eq:redmassid} is verified for $k$ odd.
\par
Suppose $k$ is even.
\begin{equation*}
\tmsp+\frac{\tmsm}{\ga}
=k[(1+\frac{1}{\ga})\frac{k}{2}-\frac{1}{\ga}]+\frac{k}{\ga}[(1+\frac{1}{\ga})\frac{k}{2}+1]
=\frac{k^2}{2}(1+\frac{1}{\ga})^2.
\end{equation*}
On the other hand,
\begin{equation*}
\tmsp-\tmsm
=k[(1+\frac{1}{\ga})\frac{k}{2}-\frac{1}{\ga}]-k[(1+\frac{1}{\ga})\frac{k}{2}+1]
=-k(1+\frac{1}{\ga}),
\end{equation*}
and \eqref{eq:redmassid}  is verified for $k$ even, as well.
\end{proof}
\subsection{The cases of physical interest}
Finally, we compute the values of $\la,\tp$ for which the bubble  tower
construction as in Theorem~\ref{thm:main}
yields solutions to problem~\eqref{eq:Onsager} and to problem~\eqref{eq:Neri}.
Let $\ur$, $\rho\in(0,\rho_0)$ be the family of concentrating solutions as
obtained in Theorem~\ref{thm:main}.
\par
\textit{Onsager's mean field equation~\eqref{eq:Onsager}}.
The solution~$\ur$ yields a solution to \eqref{eq:Onsager}
with
\[
\begin{aligned}
&\la\tp=m_+(0),
&&\la(1-\tp)\ga=m_-(0).
\end{aligned}
\]
Consequently, recalling \eqref{eq:massid},
\[
\la=m_+(0)+\frac{m_-(0)}{\ga}=\frac{\calMk^2}{8\pi},
\]
and \eqref{eq:lambda}--\eqref{eq:Ontau} readily follow.
\par
\textit{Neri's mean field equation~\eqref{eq:Neri}}.
The solution~$\ur$ yields a solution to \eqref{eq:Neri}
satisfying
\[
\begin{aligned}
&\frac{\la\tp\int_\Omega e^{\ur}}{\tp\int_\Omega e^{\ur}+(1-\tp)\int_\Omega e^{-\ga{\ur}}}=m_+(0)+o(1)\\
&\frac{\la\ga(1-\tp)\int_\Omega e^{-\ga{\ur}}}{\tp\int_\Omega e^{\ur}+(1-\tp)\int_\Omega e^{-\ga{\ur}}}=m_-(0)+o(1).
\end{aligned}
\]
We deduce that
\[
\begin{aligned}
&[\la-m_+(0)+o(1)]\,\tp\int_\Omega e^{\ur}-[m_+(0)+o(1)]\,(1-\tp)\int_\Omega e^{-\ga{\ur}}=0\\
&[m_-(0)+o(1)]\,\tp\int_\Omega e^{\ur}-[\la\ga-m_-(0)+o(1)]\,(1-\tp)\int_\Omega e^{-\ga{\ur}}=0.
\end{aligned}
\]
A non-zero solution $(\tp\int_\Omega e^{\ur},(1-\tp)\int_\Omega e^{-\ga{\ur}})$ to this linear system exists if and only if
\[
(\la-m_+(0)+o(1))(\la\ga-m_-(0)+o(1))-(m_+(0)+o(1))(m_-(0)+o(1))=0.
\]
Taking limits and dividing by $\ga$ we obtain the condition
\[
(\la-m_+(0))(\la-\frac{m_-(0)}{\ga})-m_+(0)\frac{m_-(0)}{\ga}=0.
\]
Since $\la=m_+(0)+\ga^{-1}m_-(0)$ is the only positive solution to the second order algebraic equation above, we 
deduce that \eqref{eq:lambda} is satisfied for equation~\eqref{eq:Neri} as well.
\section*{Acknowledgements}
This work is supported by PRIN {\em Aspetti variazionali e perturbativi nei problemi differenziali nonlineari}.
A.P.\ is also supported by Fondi Sapienza 2015 \textit{Alcuni aspetti geometrici di equazioni ellittiche semilineari}.
T.R\ is also supported by 
Progetto GNAMPA-INDAM 2015: \emph{Alcuni aspetti di equazioni ellittiche non lineari}.

\end{document}